\theoremstyle{plain}
\newtheorem{theorem}{Theorem}[section]
\newtheorem{lemma}[theorem]{Lemma}
\newtheorem{remark}[theorem]{Remark}
\newtheorem{corollary}[theorem]{Corollary}
\numberwithin{equation}{section}
\theoremstyle{definition}
\theoremstyle{remark}
\newcommand{\bU}{{\mathbf U}}
\newcommand{\bt}{{\mathbf t}}
\newcommand{\cA}{{\mathcal A}}
\newcommand{\cB}{{\mathcal B}}
\newcommand{\cC}{{\mathcal C}}
\newcommand{\cD}{{\mathcal D}}
\newcommand{\cE}{{\mathcal E}}
\newcommand{\cG}{{\mathcal G}}
\newcommand{\cH}{{\mathcal H}}
\newcommand{\cL}{{\mathcal L}}
\newcommand{\cP}{{\mathcal P}}
\newcommand{\cR}{{\mathcal R}}
\newcommand{\cK}{{\mathcal K}}
\newcommand{\cV}{{\mathcal V}}
\newcommand{\cS}{{\mathcal S}}
\newcommand{\cU}{{\mathcal U}}
\newcommand{\cX}{{\mathcal X}}
\newcommand{\cY}{{\mathcal Y}}
\newcommand{\C}{{\mathbb C}}
\newcommand{\bx}{{\mathbf x}}
\newcommand{\bphi}{{\boldsymbol \phi}}
\newcommand{\bw}{{\mathbf w}}
\newcommand{\R}{\text{\rm Re }}
\newcommand{\I}{\text{\rm Im }}
\newcommand{\ev}{{\text{\rm \textbf{ev}}}}
\newcommand{\bcH}{{\boldsymbol{\mathcal H}}}
\begin{document}

\title[Test functions and transfer-function realization]{Test 
functions, Schur-Agler classes and transfer-function 
realizations:  the matrix-valued setting}
\author[J.A.~Ball]{Joseph A. Ball}
\address{Department of Mathematics,
Virginia Tech,
Blacksburg, VA 24061-0123, USA}
\email{joball@math.vt.edu}
\author[M.D.~Guerra-Huam\'an]{Mois\'es D.~Guerra Huam\'an}
\address{Department of Mathematics,
Virginia Tech,
Blacksburg, VA 24061-0123, USA}
\email{moisesgg@math.vt.edu}

\begin{abstract}
Given a collection of test functions, one defines the associated 
Schur-Agler class as the intersection of the contractive 
multipliers over the collection of all positive kernels for which 
each test function is a contractive multiplier.  We indicate extensions of this framework
to the case where the test functions, kernel functions, and  
Schur-Agler-class functions are allowed to be matrix- or operator-valued.
We illustrate the general theory with two examples:    (1) the matrix-valued Schur class 
over a finitely-connected planar domain and (2) the matrix-valued version of the constrained Hardy 
algebra  (bounded analytic functions on the unit disk with derivative at the origin 
constrained to have zero value).  Emphasis is on examples where the 
matrix-valued version is not obtained as a simple tensoring with 
${\mathbb C}^{N}$ of the scalar-valued version.
\end{abstract}

\subjclass{47A56; 47A48, 47A57, 47B32, 46E22}

\keywords{Schur-Agler class, test functions, positive kernels, 
completely positive kernels, reproducing 
kernel Hilbert spaces, transfer-function realization, internal tensor product of correspondences, unitary 
colligation matrix, separation of convex sets, interior point of 
convex hull, finitely connected planar domain, constrained $H^{\infty}$-algebra}

\maketitle

\section{Introduction}  \label{S:Intro}

In honor of the work of Issai Schur (see \cite{Schur}), it is common 
nowadays to refer to the class of holomorphic functions $s$ mapping 
the unit disk ${\mathbb D}$ into the closed unit disk 
$\overline{\mathbb D}$ as the {\em Schur class} ${\mathcal S}$. 
We summarize some of the many characterizations of the Schur class
in the following theorem.

\begin{theorem} \label{T:classicalSchur}
For a given $s \colon {\mathbb D} \to {\mathbb C}$, 
the following are equivalent:
\begin{enumerate}
    \item $s \in {\mathcal S}$,
    \item the de Branges-Rovnyak kernel associated with $s$ is a 
    positive kernel on ${\mathbb D}$:
    \begin{equation}  \label{IdeBRker}
    K_{s}(z,w) : = \frac{1 - 
    s(z) \overline{s(w)}}{1 - z \overline{w}} \succeq 0.
    \end{equation}
    
    \item $s$ has a unitary transfer-function realization, i.e., 
    there is a unitary colligation matrix $\bU = \left[ 
    \begin{smallmatrix} A & B \\ C & D \end{smallmatrix} \right] 
    \colon \cX \oplus {\mathbb C} \to \cX \oplus 
    {\mathbb C}$ so 
    that
    \begin{equation}   \label{Itransfunc}
	s(z) = D + z C (I - zA)^{-1} B.
    \end{equation}
    
    \item $s$ satisfies the von Neumann inequality: for any 
    strict contraction operator $T$ on a Hilbert space $\cK$, $\|s(T) 
    \| \le 1$.
\end{enumerate}
\end{theorem}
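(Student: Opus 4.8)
The plan is to prove the chain of implications $(1)\Rightarrow(2)\Rightarrow(3)\Rightarrow(4)\Rightarrow(1)$. The implication $(4)\Rightarrow(1)$ is immediate: for each $z\in\mathbb D$ the operator $z\cdot I_{\mathbb C}$ is a strict contraction on the one-dimensional space $\cK=\mathbb C$, and $\|s(z\cdot I_{\mathbb C})\|=|s(z)|$, so the von Neumann inequality forces $|s(z)|\le1$; holomorphy of $s$ is part of what ``$s(T)$'' means (a power series convergent on $\mathbb D$), so $s\in\cS$.

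For $(1)\Rightarrow(2)$ I would pass through the multiplier picture on the Hardy space $H^2(\mathbb D)$. If $s\in\cS$ then, by Fatou's theorem, the radial boundary function $s^*$ satisfies $|s^*|\le1$ a.e., hence $\|sp\|_{H^2}\le\|p\|_{H^2}$ for every polynomial $p$; by density, multiplication $M_s$ is a contraction on $H^2$, so $I-M_sM_s^*\succeq0$. Writing $k_w(z)=(1-z\overline w)^{-1}$ for the Szeg\H o kernel, one has $M_s^*k_w=\overline{s(w)}\,k_w$, so that $\langle(I-M_sM_s^*)k_w,k_z\rangle=(1-s(z)\overline{s(w)})/(1-z\overline w)=K_s(z,w)$. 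Testing $I-M_sM_s^*\succeq0$ against finite linear combinations of the $k_{z_i}$ then yields $\sum_{i,j}c_i\overline{c_j}K_s(z_i,z_j)\ge0$, which is \eqref{IdeBRker}.

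The implication $(2)\Rightarrow(3)$ is the crux, and I would establish it by the ``lurking isometry'' argument. Clearing the denominator in \eqref{IdeBRker} gives the identity $K_s(z,w)+s(z)\overline{s(w)}=1+z\overline w\,K_s(z,w)$. Interpreted inside $\cH(K_s)\oplus\mathbb C$, where $\cH(K_s)$ is the reproducing kernel Hilbert space of the positive kernel $K_s$, this identity says that the two families $\big\{\big(\overline w\,K_s(\cdot,w),\,1\big)\big\}_{w\in\mathbb D}$ and $\big\{\big(K_s(\cdot,w),\,\overline{s(w)}\big)\big\}_{w\in\mathbb D}$ have the same Gram matrix. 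Hence the correspondence $\big(\overline w\,K_s(\cdot,w),1\big)\mapsto\big(K_s(\cdot,w),\overline{s(w)}\big)$ extends to an isometry between the closed linear spans of these families; after adjoining an auxiliary Hilbert space to $\cH(K_s)$, if necessary, so that the two orthogonal complements have equal dimension, this isometry extends to a unitary $\bU=\sbm{A & B \\ C & D}$ on $\cX\oplus\mathbb C$ with $\cX\supseteq\cH(K_s)$. Running the generating vectors through $\bU$ yields $(I-\overline w A)K_s(\cdot,w)=B$, whence $K_s(\cdot,w)=(I-\overline w A)^{-1}B$, and $\overline{s(w)}=D+\overline w\,C(I-\overline w A)^{-1}B$; taking adjoints of this scalar relation and replacing $\bU$ by the (also unitary) $\bU^*$ produces the realization \eqref{Itransfunc} for $s$. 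The hard part will be the bookkeeping with conjugations and adjoints, which has to be arranged so that one ends up realizing $s$ rather than $z\mapsto\overline{s(\overline z)}$, together with the dimension count needed to pad the partial isometry to an honest unitary colligation. (Reading the same Gram identity backwards shows $K_s(z,w)=\langle(I-\overline w A)^{-1}B,\,(I-\overline z A)^{-1}B\rangle$, so the reverse implication $(3)\Rightarrow(2)$ comes essentially for free.)

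Finally, for $(3)\Rightarrow(4)$: given \eqref{Itransfunc} with $\bU$ unitary and a strict contraction $T$ on $\cK$, the operator $I-A\otimes T$ is invertible on $\cX\otimes\cK$ since $\|A\otimes T\|\le\|T\|<1$, so $s(T):=D\,I_\cK+(C\otimes I_\cK)(I_\cX\otimes T)(I-A\otimes T)^{-1}(B\otimes I_\cK)$ is well defined and equals $D\,I_\cK+\sum_{n\ge1}(CA^{n-1}B)\,T^n$. For $x\in\cK$, put $h=(I-A\otimes T)^{-1}(B\otimes I_\cK)x\in\cX\otimes\cK$; a direct check gives $(\bU\otimes I_\cK)\sbm{(I_\cX\otimes T)h \\ x}=\sbm{h \\ s(T)x}$, so comparing norms, using that $\bU\otimes I_\cK$ is unitary and $\|I_\cX\otimes T\|\le1$, yields $\|h\|^2+\|s(T)x\|^2\le\|h\|^2+\|x\|^2$, i.e.\ $\|s(T)\|\le1$. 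This closes the loop.
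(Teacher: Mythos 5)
The paper does not give its own proof of Theorem~\ref{T:classicalSchur}: it is presented in the Introduction as a classical result, with references. Your argument is a correct proof of it, and the method you use — contractive multiplication on $H^2$ and $M_s^*k_w=\overline{s(w)}k_w$ for $(1)\Rightarrow(2)$, the lurking-isometry/Gram-identity construction for $(2)\Rightarrow(3)$, and the energy-balance computation with $\bU\otimes I_\cK$ for $(3)\Rightarrow(4)$ — is precisely the template the paper itself uses in its general Theorem~\ref{T:SchurAgler} (where the lurking-isometry step and the unitarity-based norm bound appear in the proofs of $(2)\Rightarrow(3)$ and $(3)\Rightarrow(2)$). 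Your sketch of the conjugation bookkeeping in $(2)\Rightarrow(3)$ is the right idea: mapping $\sbm{\overline w K_s(\cdot,w)\\1}\mapsto\sbm{K_s(\cdot,w)\\ \overline{s(w)}}$ by $\bU=\sbm{A&B\\C&D}$ gives $\overline{s(w)}=D+\overline w\,C(I-\overline wA)^{-1}B$, and taking complex conjugates (i.e.\ $1\times1$ adjoints) yields $s(w)=D^*+w\,B^*(I-wA^*)^{-1}C^*$, which is \eqref{Itransfunc} with colligation $\bU^*=\sbm{A^*&C^*\\B^*&D^*}$; this is exactly how the paper arranges things (it defines the isometry on the co-isometry side and then takes adjoints). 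So the approach agrees with the paper's in substance, and the remaining details you flag are routine.
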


A natural multivariable generalization of the Schur class from this point of view 
is to consider functions $s$ defined on the polydisk ${\mathbb D}^{d}$ 
(where $d$ is a positive integer).  It has been known for some time 
that the von Neumann inequality fails in more than two variables, 
i.e.:  if $d>2$ there is a holomorphic function $s$  on ${\mathbb 
D}^{d}$ (even a polynomial) with $\|s\|_{{\mathbb D}^{d}} \le 1$ and 
a commuting $d$-tuple $T = (T_{1}, \dots, T_{d})$ of strict 
contraction operators on a  Hilbert space ${\mathcal K}$ for which 
the multivariable von Neumann inequality
\begin{equation}   \label{vN}
  \| s(T)\| \le \|s\|_{{\mathbb D}^{d}}
\end{equation}
fails.  Nevertheless, the subclass of those Schur-class functions 
over ${\mathbb D}^{d}$ for which \eqref{vN} does hold, now called the 
{\em Schur-Agler class}, does have characterizations analogous to 
those given in Theorem \ref{T:classicalSchur} for the single-variable 
case (see \cite{Agler-Hellinger, AMcC99, BT}).  Note that the 
analogue of condition (4) in Theorem \ref{T:classicalSchur} is now 
used as the definition of the Schur-Agler class. 
We then have the following analogue of Theorem \ref{T:classicalSchur}

\begin{theorem}  \label{T:classicalSA}
    
    Given $s \colon {\mathbb D}^{d} \to {\mathbb C}$, the following are 
    equivalent.
    \begin{enumerate}
	\item $s \in \mathcal{SA}_{d}$.
	
	\item There are positive kernels $K_{1}, \dots, K_{d}$ on 
	${\mathbb D}^{d}$ so that 
\begin{equation}   \label{scalarAD}
	    1 - s(z) \overline{s(w)} = \sum_{k=1}^{d} (1 - z_{k} 
	    \overline{w_{k}}) K_{k}(z,w).
\end{equation}

\item There is a unitary colligation matrix $\bU = \left[ 
\begin{smallmatrix} A & B \\ C & D \end{smallmatrix} \right] \colon 
\cX \oplus {\mathbb C} \to \cX \oplus {\mathbb C}$ and a collection 
$\{P_{1}, \dots P_{d} \}$ of orthogonal projections with $P_{i} P_{j} 
= 0$ for $i \ne j$ and with $\sum_{j=1}^{d} P_{j} = I_{\cX}$ so that 
\begin{equation}   \label{Aglertransfunc}
    s(z) = D + C (I - Z(z) A)^{-1} Z(z) B
\end{equation}
where we have set $Z(z) = z_{1} P_{1} + \cdots + z_{d} P_{d}$.
\end{enumerate}

 \end{theorem}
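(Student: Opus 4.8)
The plan is to run the cyclic chain of implications $(2)\Rightarrow(3)\Rightarrow(1)\Rightarrow(2)$, with essentially all the difficulty concentrated in the last step. For $(2)\Rightarrow(3)$ I would use a ``lurking isometry'' argument. Rewrite \eqref{scalarAD} as $1 + \sum_{k=1}^{d} z_{k}\overline{w_{k}}K_{k}(z,w) = s(z)\overline{s(w)} + \sum_{k=1}^{d}K_{k}(z,w)$, factor each positive kernel by its Kolmogorov decomposition $K_{k}(z,w) = \langle H_{k}(z), H_{k}(w)\rangle_{\cH_{k}}$, and set $\cX = \bigoplus_{k}\cH_{k}$, $H(z) = \bigoplus_{k}H_{k}(z)$, with $P_{k}$ the orthogonal projection of $\cX$ onto its $k$-th summand and $Z(z) = \sum_{k}z_{k}P_{k}$. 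The identity then reads $\langle Z(z)H(z)\oplus 1,\, Z(w)H(w)\oplus 1\rangle = \langle H(z)\oplus s(z),\, H(w)\oplus s(w)\rangle$ in $\cX\oplus{\mathbb C}$, so $Z(z)H(z)\oplus 1\mapsto H(z)\oplus s(z)$ extends to a well-defined isometry between two subspaces of $\cX\oplus{\mathbb C}$; I would extend it to a unitary $\bU = \sbm{A&B\\ C&D}$ on $\cX'\oplus{\mathbb C}$ for a suitably enlarged $\cX'\supseteq\cX$ (routing the extra summand into $P_{1}$ so that $\sum_{k}P_{k} = I_{\cX'}$ and $P_{i}P_{j}=0$ persist). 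Reading off the two components of $\bU(Z(z)H(z)\oplus 1) = H(z)\oplus s(z)$ gives $H(z) = (I-AZ(z))^{-1}B$ and $s(z) = D + CZ(z)H(z)$, and the push-through identity $Z(z)(I-AZ(z))^{-1} = (I-Z(z)A)^{-1}Z(z)$ converts the latter into \eqref{Aglertransfunc}.

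For $(3)\Rightarrow(1)$ I would amplify the colligation: given commuting strict contractions $T = (T_{1},\dots,T_{d})$ on $\cK$, set $Z_{T} = \sum_{k}T_{k}\otimes P_{k}$ on $\cK\otimes\cX$, so that $Z_{T}Z_{T}^{*} = \sum_{k}T_{k}T_{k}^{*}\otimes P_{k}$ (using $P_{i}P_{j}=\delta_{ij}P_{i}$) and hence $\|Z_{T}\| = \max_{k}\|T_{k}\| < 1$. Writing $s(T) = (I\otimes D) + (I\otimes C)(I-Z_{T}(I\otimes A))^{-1}Z_{T}(I\otimes B)$ and feeding, for $y\in\cK$, the vector $x\oplus y$ with $x = (I-Z_{T}(I\otimes A))^{-1}Z_{T}(I\otimes B)y$ through the unitary $I\otimes\bU$ to obtain $\xi\oplus\eta$, one gets $\|\xi\|^{2}+\|\eta\|^{2} = \|x\|^{2}+\|y\|^{2}$ together with $x = Z_{T}\xi$ and $\eta = s(T)y$; hence $\|s(T)y\|^{2} = \|x\|^{2}+\|y\|^{2}-\|\xi\|^{2}\le\|Z_{T}\|^{2}\|\xi\|^{2}+\|y\|^{2}-\|\xi\|^{2}\le\|y\|^{2}$, so $\|s(T)\|\le 1$ and $s\in\mathcal{SA}_{d}$. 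Both of these implications are essentially careful bookkeeping around the unitarity of $\bU$.

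For $(1)\Rightarrow(2)$, the analytic core, I would argue by Hahn--Banach separation. It suffices to produce, for each finite $F = \{w^{(1)},\dots,w^{(n)}\}\subset{\mathbb D}^{d}$ and each $\varepsilon>0$, positive semidefinite matrices $K_{1}^{F},\dots,K_{d}^{F}$ with $[\,1-s(w^{(i)})\overline{s(w^{(j)})}\,]$ within $\varepsilon$ of $\sum_{k}[\,(1-w^{(i)}_{k}\overline{w^{(j)}_{k}})K_{k}^{F}(i,j)\,]$: evaluating such decompositions on the diagonal gives a uniform bound on the $K_{k}^{F}$, so a weak-$*$ compactness argument along an exhaustion of ${\mathbb D}^{d}$ by finite sets then yields global positive kernels realizing \eqref{scalarAD}. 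If, for some $F$, the matrix $P_{F}:=[\,1-s(w^{(i)})\overline{s(w^{(j)})}\,]$ lay outside the closed convex cone generated in the space of Hermitian $n\times n$ matrices by $\{\sum_{k}A_{k}\circ K_{k}:K_{k}\succeq 0\}$ (Schur product $\circ$, with $A_{k}=[\,1-w^{(i)}_{k}\overline{w^{(j)}_{k}}\,]$), I would separate it by a Hermitian $\Psi$ with $\operatorname{tr}(\Psi M)\ge 0$ on the cone and $\operatorname{tr}(\Psi P_{F})<0$; testing against rank-one $K_{k}$ forces $\Psi\succeq\Delta_{k}\Psi\Delta_{k}^{*}$ with $\Delta_{k}=\operatorname{diag}(w^{(1)}_{k},\dots,w^{(n)}_{k})$, and iterating (as $\|\Delta_{k}\|<1$) forces $\Psi\succeq 0$. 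A GNS construction then supplies a finite-dimensional $\cK$, vectors $f_{i}$ with $\langle f_{j},f_{i}\rangle=\Psi_{ij}$, and commuting operators $S_{k}$ with $S_{k}f_{i}=w^{(i)}_{k}f_{i}$ (up to the obvious conjugation in the pairing), each a contraction by the inequalities just noted and with $\operatorname{spec}(S_{k})\subset{\mathbb D}$; then $s(S_{1},\dots,S_{d})$ is defined and $\operatorname{tr}(\Psi P_{F})<0$ becomes $\|s(S)\|>1$, contradicting $s\in\mathcal{SA}_{d}$.

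The principal obstacle is this last implication, and two points within it deserve care. First, the passage from the finite-set (approximate) decompositions to genuine global positive kernels relies on the uniform diagonal bound together with a normal-families/weak-$*$ extraction. Second, one must verify that the separating $\Psi$ truly encodes a commuting $d$-tuple to which the von Neumann inequality applies — in particular arranging the $S_{k}$ to be \emph{strict} contractions and justifying the holomorphic (rather than merely polynomial) functional calculus $s\mapsto s(S)$, which one handles by first carrying out the argument over $r{\mathbb D}^{d}$ and letting $r\uparrow 1$.
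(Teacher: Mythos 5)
The paper does not actually prove Theorem~\ref{T:classicalSA} --- it is stated as a known classical result and the reader is referred to \cite{Agler-Hellinger, AMcC99, BT}. So your proof cannot be compared against a proof in the paper itself; what can be said is that it runs along the standard lines of Agler, Agler--McCarthy, and Ball--Trent, and that those are precisely the lines the paper generalizes in its own main result, Theorem~\ref{T:SchurAgler}. The correspondence is clean: your lurking-isometry step for (2)$\Rightarrow$(3) is the scalar, coordinate-test-function instance of the paper's proof of (2)$\Rightarrow$(3) of Theorem~\ref{T:SchurAgler}; your amplified-colligation/norm-bookkeeping step for (3)$\Rightarrow$(1) corresponds (after translating the von~Neumann inequality into the multiplier formulation) to the paper's (3)$\Rightarrow$(2)$\Rightarrow$(1) chain; and your cone-separation argument for (1)$\Rightarrow$(2) over finite sets followed by a compactness/exhaustion passage is exactly the skeleton of the paper's proof of (1)$\Rightarrow$(2), where the closedness of the cone plays the role of Lemma~\ref{L:Cclosed}, the rank-one test plays the role of Lemma~\ref{L:posdefin}, and the Kurosh-theorem patching replaces your weak-$*$ extraction along an exhaustion. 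Your argument is more elementary precisely because everything is finite-dimensional and scalar-valued, so you can avoid the $\epsilon$-regularization, the $C^*$-correspondence machinery, and the decomposition of a representation of $C_b(\Psi)$ that the paper has to grapple with.

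A few small points worth tightening, none of which is a gap. The cone condition actually yields $\Psi \succeq \Delta_k^* \Psi \Delta_k$ rather than $\Psi \succeq \Delta_k \Psi \Delta_k^*$ (a conjugation/transpose convention), and it is this form that is needed both to conclude $\Psi \succeq 0$ by iterating and to make the eigenvector assignment $S_k f_i = w^{(i)}_k f_i$ well-defined on the GNS space. Once $\Psi \succeq 0$ and the $S_k$ are defined, the contradiction is obtained most cleanly not via $\mathrm{tr}(\Psi P_F)$ abstractly but by taking $g = \sum_j f_j$ and computing $\|g\|^2 - \|s(S)g\|^2 = \sum_{i,j} \Psi_{ij}\bigl(1 - s(w^{(j)})\overline{s(w^{(i)})}\bigr) = \mathrm{tr}(\Psi P_F) < 0$; this shows $\|s(S)\| > 1$. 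Finally, as you note, the $S_k$ are contractions with $\operatorname{spec}(S_k) \subset \mathbb{D}$ but need not be strict contractions, so one first applies the hypothesis to $rS = (rS_1,\dots,rS_d)$ for $r<1$ (where the tensor power-series definition of $s(rS)$ converges) and then lets $r \uparrow 1$ using finite-dimensionality and local uniform convergence of the power series; with that, the argument closes.
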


In the test-function approach to defining generalized Schur-Agler 
classes, going back to the unpublished preprint of 
Agler \cite{Agler-preprint}  and developed further in
\cite{AMcC-book, DMM07, DM07, McCS}, one proceeds as follows.  
We here describe the scalar-valued function setting, although the 
paper \cite{DMM07} deals with a more general semigroupoid setting.
One replaces the unit disk ${\mathbb D}$ (or unit polydisk ${\mathbb 
D}^{d}$) with a completely general point set $\Omega$ and supposes 
that one is given a collection of ${\mathbb C}$-valued functions $\Psi$
on $\Omega$ (the set of {\em test functions}) subject to the condition that $\sup_{\psi \in \Psi} 
|\psi(z)| < 1$ for each $z \in \Omega$.  The set $\Psi$ carries with it a 
natural completely regular topology, namely, 
the weakest topology with respect to which each of the functions
\begin{equation}   \label{bbE}
    {\mathbb E}(z) \colon \psi \to \psi(z), \quad z \in \Omega
 \end{equation}
 is continuous.  One then says that a positive kernel $k$ is 
 $\Psi$-admissible (written as $k \in \cK_{\Psi}$) if multiplication by $\psi$ is contractive as an 
 operator on the reproducing kernel Hilbert space $\cH(k)$ associated 
 with $k$, i.e., if the kernel $K_{\psi,k}(z,w) = (1 - \psi(z) 
 \overline{\psi(w)} k(z,w)$ is positive for each $\psi \in \Psi$.
 We then say that the function $s \colon \Omega \to {\mathbb C}$ is 
 in the $\Psi$-Schur-Agler class $\mathcal{SA}_{\Psi}$ if 
 multiplication by $s$ is contractive on $\cH(k)$ for each $k \in 
 \cK_{\Psi}$, i.e., if the kernel $K_{s,k}(z,w) = (1 - s(z) 
 \overline{s(w)}) k(z,w)$ is a positive kernel for each $k \in 
 \cK_{\Psi}$.  We mention that the choice 
 \begin{equation} \label{Psi-clasSchur}
     \Omega = {\mathbb D}, \quad  
 \Psi = \{ \psi_{0}(z) = z\}
 \end{equation}
 leads to the classical Schur class 
 while the choice 
 \begin{equation}  \label{Psi-clasSA}
     \Omega = {\mathbb D}^{d}, \quad \Psi = \{ 
 \psi_{k}(z) = z_{k} \colon k = 1, \dots, d\}
 \end{equation}
 (where $z = (z_{1}, \dots, z_{d}) \in {\mathbb D}^{d}$) leads to the classical 
 Schur-Agler class $\mathcal{SA}_{d}$.
 
 The following is the main result concerning the 
 Schur-Agler class $\mathcal{SA}_{\Psi}$ associated with a general
 test-function collection $\Psi$.  
 
 \begin{theorem}  \label{T:0} (See \cite{DMM07, DM07} and 
     \cite{Ambrozie04} for an early version.)
     Given a function $s \colon \Omega \to {\mathbb C}$, the 
     following are equivalent.
     \begin{enumerate}
     \item $s \in \mathcal{SA}_{\Psi}$.
     
     \item There is a measure $\nu$ on $\Psi_{\beta}$ (the 
     Stone-\v{C}ech compactification of $\Psi$) and a measurable 
     family $\{K_{\psi} \colon \psi \in \Psi_{\beta}\}$ of positive 
     kernels on $\Psi_{\beta}$ so that 
   \begin{equation}   \label{IscalarAD}
       1 - s(z) \overline{s(w)} = \int_{\Psi_{\beta}} \left(1 - \psi(z) 
       \overline{\psi(w)}\right) K_{\psi}(z,w)\, {\tt d}\nu(\psi).
   \end{equation}
   
   \item There is a $C(\Psi_{\beta})$-unitary colligation, i.e., a 
   bock unitary operator $\bU = \left[ \begin{smallmatrix} A & B  \\ 
   C & D \end{smallmatrix} \right] \colon {\mathcal X} \oplus 
   {\mathbb C} \to {\mathcal X} \oplus {\mathbb C}$ together with a 
   $*$-representation $\rho$ of the $C^{*}$-algebra $C(\Psi_{\beta})$ 
   (continuous complex-valued functions on $\Psi_{\beta}$) into 
   $\cL(\cX)$ (bounded linear operators on $\cX$),  so that 
   \begin{equation} \label{Itransfunc'}
       s(z) = D + C (I - \rho({\mathbb E}(z)) A)^{-1} \rho({\mathbb 
       E}(z)) B
   \end{equation}
   (where ${\mathbb E}(z)$ is as in \eqref{bbE}).
  \end{enumerate}   
 \end{theorem}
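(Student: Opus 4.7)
The plan is to prove the cyclic chain $(1)\Rightarrow(2)\Rightarrow(3)\Rightarrow(1)$, following the template of the scalar test-function arguments in \cite{DMM07, DM07}. The step $(1)\Rightarrow(2)$, a Hahn-Banach/convex-separation argument, is where the main technical obstacle lies; the other two directions are variations on well-known constructions.

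For the closing step $(3)\Rightarrow(1)$: unitarity of $\bU$ yields the four block relations $A^{*}A+C^{*}C=I_{\cX}$, $AA^{*}+BB^{*}=I_{\cX}$, $|D|^{2}+B^{*}B=1$, and $A^{*}B+C^{*}D=0$. Substituting \eqref{Itransfunc'} and performing the standard de Branges-Rovnyak manipulation yields the operator identity
$$1-s(z)\overline{s(w)}=C\bigl(I-\rho(\mathbb{E}(z))A\bigr)^{-1}\bigl(I-\rho(\mathbb{E}(z))\rho(\mathbb{E}(w))^{*}\bigr)\bigl(I-A^{*}\rho(\mathbb{E}(w))^{*}\bigr)^{-1}C^{*}.$$
For a $\Psi$-admissible kernel $k$, diagonalize $\rho$ via the spectral theorem for $C(\Psi_{\beta})$-modules as a direct integral over $\Psi_{\beta}$; on each fibre $\psi$, $\rho(\mathbb{E}(z))$ acts as scalar multiplication by $\psi(z)$, so the admissibility condition $(1-\psi(z)\overline{\psi(w)})k(z,w)\succeq 0$ propagates fibrewise through the centre factor, and the resolvent sandwich preserves positivity. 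Hence $(1-s(z)\overline{s(w)})k(z,w)\succeq 0$, i.e.\ $s\in\mathcal{SA}_{\Psi}$.

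For $(2)\Rightarrow(3)$ (the ``lurking isometry''): rearrange \eqref{IscalarAD} as
$$1+\int_{\Psi_{\beta}}\psi(z)\overline{\psi(w)}K_{\psi}(z,w)\,{\tt d}\nu(\psi)\;=\;s(z)\overline{s(w)}+\int_{\Psi_{\beta}}K_{\psi}(z,w)\,{\tt d}\nu(\psi),$$
observe that both sides are positive kernels, and apply Kolmogorov factorization to build Hilbert spaces from each side. Matching the distinguished kernel elements produces a densely-defined isometry $V$ between them. Multiplication by $\varphi\in C(\Psi_{\beta})$ under the integrals furnishes a $*$-representation $\rho$ on the auxiliary component. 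Adjoining defect spaces extends $V$ to a unitary colligation $\bU=\bbm{A & B \\ C & D}$, and expanding $(I-\rho(\mathbb{E}(z))A)^{-1}$ as a Neumann series recovers \eqref{Itransfunc'}.

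For the main obstacle $(1)\Rightarrow(2)$: work in the real locally convex space $\cV$ of Hermitian kernels on $\Omega\times\Omega$, topologized by pointwise convergence on finite subsets. Let $\cC\subset\cV$ be the closed convex cone of kernels representable in the integral form on the right-hand side of \eqref{IscalarAD}. Assume $(1)$ holds but $1-s(z)\overline{s(w)}\notin\cC$. Hahn-Banach supplies a continuous real-linear $L$ on $\cV$ with $L\ge 0$ on $\cC$ and $L(1-s\overline{s})<0$; the topology forces $L(H)=\operatorname{tr}(MH|_{F\times F})$ for some finite $F\subset\Omega$ and Hermitian matrix $M$. Pairing $L\ge 0$ with cone elements $(1-\psi(z)\overline{\psi(w)})K(z,w)$ (i.e.\ $\nu=\delta_{\psi}$) as $\psi$ and positive $K$ vary forces $M\succeq 0$ and $(1-\psi(z)\overline{\psi(w)})M\succeq 0$ for every $\psi$, so $M$ is a $\Psi$-admissible kernel on $F$; yet $L(1-s\overline{s})<0$ translates to $(1-s(z)\overline{s(w)})M\not\succeq 0$, contradicting $(1)$. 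The two principal delicacies are (a) verifying that $\cC$ is truly closed in the chosen topology, which requires a careful Alaoglu-type argument exploiting $\sup_{\psi\in\Psi}|\psi(z)|<1$ pointwise, and (b) identifying the representing object in $\cC$ as a Borel measure on $\Psi_{\beta}$ paired with a measurable positive-kernel family. Step (b) is precisely where Gelfand-Naimark $C(\Psi_{\beta})\cong C_{b}(\Psi)$ and the Riesz-Markov theorem force the passage to the Stone-\v{C}ech compactification in the statement of $(2)$.
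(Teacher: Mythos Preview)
Your proposal is correct and follows the same overall strategy as the paper's proof of the operator-valued generalization (Theorem~\ref{T:SchurAgler}): cone separation for $(1)\Rightarrow(2)$, the lurking-isometry argument for $(2)\Rightarrow(3)$, and the CCR/spectral structure of $C(\Psi_{\beta})$ for the return direction.

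The one point of genuine divergence is in the execution of $(1)\Rightarrow(2)$. You identify the separating functional directly with a finite Hermitian matrix $M$ and argue that $M$ itself is a $\Psi$-admissible kernel on its finite support set $F$; this shortcut is legitimate in the scalar setting (though note that $M\succeq 0$ does not follow from pairing with $(1-\psi\overline{\psi})K$ alone---you need either that positive kernels lie in $\cC$, as in Lemma~\ref{L:posdefin}, or the Schur-product identity $M=(1-\psi_{0}\overline{\psi_{0}})^{-1}\cdot[(1-\psi_{0}\overline{\psi_{0}})M]$). The paper, aiming at the operator-valued case where this shortcut is unavailable, instead uses the functional to define an inner product, applies an $\epsilon$-regularization to force bounded point evaluations, and then identifies the resulting space as $\cH(K)$ for an admissible $K$ (Lemmas~\ref{L:HLtensor}--\ref{L:HLtoHK}). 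Finally, your description of delicacy~(a) as an ``Alaoglu-type argument'' understates what is required: Alaoglu gives closedness of the cone over each fixed finite subset (as in Lemma~\ref{L:Cclosed}), but to conclude closedness in the pointwise-on-finite-subsets topology on all of $\Omega$ one must also patch the non-unique finite-subset representations together consistently, which is precisely the Kurosh-theorem step the paper isolates as Step~2.
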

 Note that conditions (2) and (3) in Theorem \ref{T:0} become 
 conditions (2) and (3) in Theorem \ref{T:classicalSchur} when 
 $\Omega$ and $\Psi$ are chosen as in \eqref{Psi-clasSchur},
 and conditions (2) and (3) in Theorem \ref{T:classicalSA} when 
 $\Omega$ and $\Psi$ are chosen as in \eqref{Psi-clasSA}.

 A different type of extension of the classical Schur class over the 
 unit disk  is the Schur-class 
 $\cS_{\cR}$ over a bounded, finitely connected planar domain ${\mathcal R}$.
 Here $\cR$ is a bounded domain in the complex plane with boundary consisting 
 of $m+1$ disjoint smooth Jordan curves $\partial_{0}, \partial_{1}, \dots, 
 \partial_{m}$, where $\partial_{0}$ denotes the boundary of the 
 unbounded component of the complement of $\cR$, and we define $\cS_{\cR}$ as the 
 class of all holomorphic functions from $\cR$ into the closed disk 
 ${\mathbb D}^{-}$.  Work in \cite{DMM07, DM07} identifies the Schur 
 class $\cS_{\cR}$ over $\cR$ as a test-function Schur-Agler  class 
 $\mathcal{SA}_{\Psi_{\cR}}$ for a certain collection of test functions 
 $\Psi_{\cR} = \{ \psi_{\bx} \colon \bx \in {\mathbb T}_{\cR\}}$ indexed 
 by the so-called $\cR$-torus ${\mathbb T}_{\cR}$ defined as the 
 Cartesian product of the connected components of $\partial \cR$:
 $$
  \bx \in   {\mathbb T}_{\cR} : = \partial_{0} \times \partial_{1} \times 
   \cdots \times \partial_{m}.
 $$
(see Section \ref{S:cR} below for complete details).  In particular, 
the decomposition \eqref{IscalarAD} in Theorem \ref{T:0} for this case gives 
us the following: {\em  given $s \in \cS_{\cR}$, there is a measure $\nu$ 
on ${\mathbb T}_{\cR}$ and a family of positive kernels $\{k_{\bx} 
\colon \bx \in {\mathbb T}_{\cR} \}$ so that} 
\begin{equation}  \label{scalarRAdecom}
    1 - s(z) \overline{s(w)}  = \int_{{\mathbb T}_{\cR}} \left(1 - 
    \psi_{\bx}(z) \overline{\psi_{\bx}(w)}\right) k_{\bx}(z,w)\, {\tt 
    d}\nu(\bx).
\end{equation}

We shall be interested in matrix- and operator-valued versions of 
these Schur and Schur-Agler classes.  
The operator-valued version of the Schur class over $\cR$, which we 
denote as $\cS_{\cR}(\cU, \cY)$, consists of holomorphic functions $S$ on 
 $\cR$ with values $S(z)$ equal to contraction operators 
between two Hilbert spaces $\cU$ and $\cY$. For the case where $\cR = 
{\mathbb D}$, we drop the subscript $\cR$ and write simply $\cS(\cU, 
\cY)$;  we also abbreviate $\cS_{\cR}(\cU, \cU)$ to $\cS_{\cR}(\cU)$. 
There is also an operator-valued version of the Schur-Agler class 
over ${\mathbb D}^{d}$, namely:  {\em $S \colon {\mathbb D}^{d} \to \cL(\cU, 
\cY)$ is in the Schur-Agler class $\mathcal{SA}_{d}(\cU, \cY)$ if $S$ 
is a holomorphic map from ${\mathbb D}^{d}$ into $\cL(\cU, \cY)$ such 
that $\| S(T) \| \le 1$ for any commutative tuple $T = (T_{1}, \dots, 
T_{d})$ of strictly contractive operators on a Hilbert space $\cK$}, 
where we use a tensor functional calculus to define $S(T)$:
$$
S(T) = \sum_{n \in {\mathbb Z}^{d}_{+}} S_{n} \otimes T^{n} \text{ if 
}  S(z) = \sum_{n \in {\mathbb Z}^{d}} S_{n} z^{n}
$$
where we use standard multivariable notation:
$$
 z^{n} = z_{1}^{n_{1}} \cdots z_{d}^{n_{d}}, \quad
 T^{n} = T_{1}^{n_{1}} \cdots T_{d}^{n_{d}} \text{ for } n = (n_{1}, 
 \dots, n_{d}) \in {\mathbb Z}^{d}_{+}.
$$
Then Theorems \ref{T:classicalSchur} and  \ref{T:classicalSA} have 
seamless extensions to the matrix-/operator-valued settings.  Indeed, 
$S \in \cS(\cU, \cY)$ if and only if the de Branges-Rovnyak 
$\cL(\cY)$-valued  kernel 
$$
  K_{S}(z,w): = \frac{ I_{\cY} - S(z) S(w)^{*}}{ 1 - z \overline{w}}
$$
is a positive kernel on ${\mathbb D}$ if and only if there is a 
unitary colligation matrix $\bU = \left[ \begin{smallmatrix} A & B \\ 
C & D \end{smallmatrix} \right] \colon \cX \oplus \cU \to \cX \oplus 
\cY$ so that $S(z) = D + z C ( I - z A)^{-1} B$.  Similarly, $S \in 
\mathcal{SA}_{d}(\cU, \cY)$ if and only if there are positive 
$\cL(\cE)$-valued kernels $K_{1}, \dots, K_{d}$ on ${\mathbb D}^{d}$ 
so that $I - S(z) S(w)^{*} = \sum_{k=1}^{d} (1 - z_{k} 
\overline{w_{k}}) K_{k}(z,w)$ if and only if $S$ has a representation 
as in \eqref{Aglertransfunc} but with $\bU$ acting from $\cX \oplus 
\cU$ to $\cX \oplus \cY$. We mention that this
result has inspired several variants where the polydisk 
${\mathbb D}^{d}$ is replaced by a more general domain $\cD_{Q}$ in 
${\mathbb C}^{d}$ specified by a polynomial (or more generally 
analytic) matrix-valued determining function $Q$:  $\cD_{Q} = \{ z 
\in {\mathbb C}^{d} \colon \| Q(z) \| < 1\}$; more generally the 
technique of the proof going through the transfer-function 
realization naturally leads to interpolation and  commutant lifting 
versions of the result (see \cite{BT, BLTT, Tomerlin, BTV, AT, BBQ, 
AE}).  We mention  that  there is now also a noncommutative version of the 
Schur-Agler class \cite{BGM}.

However, for the case $\cS_{\cR}({\mathbb 
C}^{N})$, the expected matrix generalization of 
\eqref{scalarRAdecom}, namely
\begin{equation}   \label{expected}
    I - S(z) S(w)^{*} = \int_{{\mathbb T}_{\cR}} \left( 1 - 
    \psi_{\bx}(z) \overline{\psi_{\bx}(w)} \right) K_{\bx}(z,w) \, 
    {\tt d}\nu(\bx)
\end{equation}
for a measurable family $\{ K_{\bx} \colon \bx \in {\mathbb 
T}_{\cR}\}$ of positive $N \times N$ matrix-valued kernels on $\cR$,
fails in general, at least in the case where $\cR$ is a region with 
three holes having some additional symmetry properties; indeed this 
phenomenon is a key ingredient in the negative answer to the spectral 
set question for such regions $\cR$ obtained by Dritschel and 
McCullough in \cite{DM05}.  

One of the main motivations for the 
present paper is to develop a framework of test-function Schur-Agler 
class $\mathcal{SA}_{\Psi}$ for the case of matrix- or 
operator-valued test functions $\Psi$ and to recover a formula of the 
type \eqref{expected} for the Schur class $\cS_{\cR}({\mathbb 
C}^{N})$ for an appropriately enlarged class $\Psi^{N}_{\cR}$ of 
matrix-valued test functions.  We therefore develop a
systematic extension of the work of \cite{DMM07, DM07} to the matrix- 
and operator-valued setting: this is the main 
content of Section \ref{S:main} below.   We also emphasize the 
interpolation version of the main result, whereby one characterizes 
which functions $S_{0}$ defined on some subset $\Omega_{0}$ of 
$\Omega$ can be extended to a test-function Schur-Agler-class function $S$
defined on all of $\Omega$.  Most of the analysis builds 
on the earlier work of \cite{Agler-Hellinger, AMcC99, BT, AT, BBQ, 
Ambrozie04, DMM07, DM07}, but there are places where new 
ideas and techniques were required.

In Section \ref{S:ex} we take
two algebras which 
are intrinsically defined and identify their unit balls as also arising as 
test-function Schur-Agler classes. The first has already been 
mentioned: namely, the algebra of bounded holomorphic $N \times N$ 
matrix functions over a multiply-connected planar domain $\cR$ whose 
unit ball is the Schur class $\cS_{\cR}({\mathbb C}^{N})$.  The 
second is the matrix-valued version of the constrained Hardy 
algebra over the unit disk ${\mathbb D}$ (bounded holomorphic 
functions $f$ on ${\mathbb D}$ subject to the constraint that $f'(0) = 
0$). The first example has been an object of much study over the 
years (see \cite{Abrahamse, Ball79, BC, AHR, DM05, VF}) while interest 
in the second is more recent \cite{DPRS, BBtH, Rag08}.
Motivation for study of the second algebra comes from the fact that 
it is a model for the bounded analytic functions on the intersection of a variety $V$ 
embedded in ${\mathbb C}^{2}$ with the unit bidisk (see \cite{AMcC05}).
For these two examples we identify an appropriate class of 
test functions $\Psi^{N}$ so that the unit ball of the given algebra is equal to the 
matrix-valued test-function Schur-Agler class 
$\mathcal{SA}_{\Psi^{N}}$  associated with $\Psi^{N}$.
It is always possible to choose $\Psi^{N}$ simply as the unit ball of the 
given algebra; the point is to find a valid class $\Psi^{N}$ which is as 
small as possible.  As has already been mentioned for the first 
example,  in both examples the test-function class $\Psi^{1}$ 
identified in previous work (\cite{DM07, DP}) 
 for the scalar-valued version fails to work for 
the matrix-valued case.  For each of these two examples, we find a 
valid test-function class $\Psi^{N}$ as a linear-fractional transform of the set 
of extreme points of a normalized matrix-valued  Herglotz (positive real part) 
version of the algebra, just as has been done for the scalar-valued 
case in \cite{DM05, DM05, DP}.  Identification of these extreme 
points for the matrix-valued case leads us to draw on results from 
\cite{BG} concerning extreme points for a convex cone of matrix 
quantum probability measures (positive matrix-valued measures with 
total mass equal to the identity matrix).  The resulting 
test-function classes are not as explicit as in the scalar-valued 
settings; however, for the Schur class $\cS_{\cR}$ with $\cR$ equal 
to an annulus, we are able to use results of McCullough \cite{McC95} to 
obtain a more explicit test-function class and use the resulting 
matrix-valued continuous Agler decomposition (the matrix-valued 
analogue of \eqref{IscalarAD}) to obtain a variant of McCullough's 
positive solution of the spectral set question for an annulus.

A criticism of the study of Schur-Agler classes in general is that their 
intrinsic structure is a priori mysterious: after going through the 
several steps of the definition, one does not have any intrinsic 
characterization of the eventual result.  Our work in Section 
\ref{S:ex} (as well as the work in \cite{DM07, DP}) counterbalances 
this concern by starting with an intrinsically defined function 
algebra and identifying it as a Schur-Agler class.  There are now papers 
obtaining characterizations of which operator algebras have unit 
balls equal to a Schur-Agler class (see \cite{MP, Jury}).  Other work 
\cite{JKM} characterizes families of kernels so that the 
associated contractive multipliers form a test-function Schur-Agler 
class. It should be of interest to extend these results to 
the matrix-valued setting in the spirit of the present paper.

The paper is organized as follows.  Section \ref{S:prelim} presents 
some preliminary material on test functions, positive kernels, and 
structured unitary colligation matrices needed in the sequel.  Section 
\ref{S:main} presents the main structure result (including the 
interpolation version as well as a representation-theoretic version) 
for the general matrix-valued test-function Schur-Agler class.  
Section \ref{S:ex} develops the two illustrative examples of 
matrix-valued Schur classes which can be identified as test-function 
Schur-Agler classes.  Finally we mention that  this paper together 
with \cite{BG} form an enhanced version of the second author's 
dissertation \cite{GH}.

\section{Preliminaries}  \label{S:prelim}

\subsection{Test functions}  \label{S:test}

We assume that we are given two coefficient Hilbert spaces $\cU_{T}$ 
and $\cY_{T}$ and a collection $\Psi$ of functions $\psi$ on the 
abstract set of points $\Omega$ with values in the space $\cL(\cU_{T}, \cY_{T})$ of 
bounded linear operators between $\cU_{T}$ and $\cY_{T}$.  We say 
that $\Psi$ is a {\em collection of test functions} if it happens that
\begin{equation}  \label{test-axiom}
    \sup \{ \| \psi(z) \| \colon \psi \in \Psi \} < 1 \text{ for each } 
    z \in \Omega.
\end{equation}
We view $\Psi$ as a subset of $B(\Omega, \overline{\cB}\cL(\cU_{T}, 
\cY_{T}))$ (the space of (bounded) maps from $\Omega$ into the closed 
unit ball of bounded linear operators between $\cU_{T}$ and 
$\cY_{T}$).   We topologize 
$B(\Omega, \overline{\cB}\cL(\cU_{T}, \cY_{T}))$ with the topology of 
pointwise weak-$*$ convergence, i.e., we view $B(\Omega, \overline{\cB}\cL(\cU_{T}, \cY_{T}))$
as the Cartesian product $\Pi_{\Omega} \overline{\cB}\cL(\cU_{T}, 
\cY_{T})$ with the standard Cartesian product topology).  As such 
$B(\Omega, \overline{\cB}\cL(\cU_{T}, \cY_{T}))$ is compact by 
Tychonoff's Theorem (\cite[Theorem XI.1.4]{Dugundji}),  since
each fiber $\overline{\cB}\cL(\cU_{T}, \cY_{T})$ is compact by the 
Banach-Alaoglu Theorem \cite[Theorem 3.15]{Rudin}.   As a subspace of the completely regular 
space  $B(\Omega, \overline{\cB}\cL(\cU_{T}, \cY_{T}))$ (i.e., 
$B(\Omega, \overline{\cB}\cL(\cU_{T}, \cY_{T}))$ is Hausdorff and 
any closed set can be separated from a point disjoint from it by a 
continuous function), $\Psi$ is completely regular in the subspace 
topology inherited from 
$B(\Omega, \overline{\cB}\cL(\cU_{T}, \cY_{T}))$.
The closure of $\Psi$ in this topology is compact; however 
we shall be more interested in the Stone-\v{C}ech compactification 
$\Psi_{\beta}$ of $\Psi$ \cite[Section XI.8]{Dugundji}.
Then the space $C_{b}(\Psi, \cL(\cH, \cK))$ of bounded continuous 
functions $f$ from $\Psi$ into a space $\cL(\cH, \cK)$ of 
bounded linear operators between two Hilbert spaces $\cH$ and $\cK$
can be identified with  the space $C(\Psi_{\beta}, \cL(\cH, \cK))$ 
of continuous functions from the Stone-\v{C}ech compactification 
$\Psi_{\beta}$ into $\cL(\cH, \cK)$.  An operator-valued version 
of the Riesz representation theorem 
allows us to identify the dual of 
$C_{b}(\Psi, \cL(\cH, \cK))$ with regular, bounded, weakly countably 
additive  $\cC_{1}(\cK, \cH)$-valued measures on $\Psi_{\beta}$, 
where we use the notation $\cC_{1}(\cK, \cH)$ to denote the 
trace-class operators from $\cK$ to $\cH$.  We note that 
there are continuous linear functionals $L$  in  $C(\Psi_{\beta}, \cL(\cH, \cK))$ 
such that allowing points of $\Psi_{\beta} \setminus \Psi$ to be part of the support of 
the corresponding measure $\mu_{L}$ is essential (see 
\cite[Section 5.2]{DM07}).

For each $\psi \in \Psi$ we 
define the map $\ev_{\psi} \colon C_{b}(\Psi, \cL(\cH, \cK)) \to 
\cL(\cK)$ by $\ev_{\psi} \colon f \to f(\psi)$.  A particular element 
of $C_{b}(\Psi, \cL(\cU_{T}, \cY_{T}))$ which will often come up is 
the function ${\mathbb E}(z)$ (for each $z \in \Omega$) given by
\begin{equation}  \label{boldE}
  \ev_{\psi}({\mathbb E}(z)) = {\mathbb E}(z)(\psi): = \psi(z).
\end{equation}

\subsection{Positive operator-valued kernels and their multipliers}  
\label{S:kernels}

Let $\cE$ be any Hilbert space and suppose that $K$ is a function on 
$\Omega \times \Omega$ with values in $\cL(\cE)$.  We say that  
$K$ is a {\em positive kernel} if the Aronszajn condition
\begin{equation}   \label{posker}
 \sum_{i,j = 1}^{N} \langle K(z_{i}, z_{j}) e_{j}, e_{i} \rangle_{\cE} 
 \ge 0 \text{ for all } z_{1}, \dots, z_{n} \in \Omega,\,
 e_{1}, \dots, e_{N} \in \cE, \, N = 1, 2, \dots.
 \end{equation}
The following equivalent versions of the 
positive-kernel condition are often used in function-theoretic 
operator theory settings.

\begin{theorem} \label{T:posker}  (See e.g.~\cite{AMcC-book}.)
Suppose that we are give a function $K \colon \Omega \times \Omega \to \cL(\cE)$. 
Then the following are equivalent:

\begin{enumerate}
    \item $K$ is a positive kernel, i.e., condition \eqref{posker} 
    holds.
    
    \item There is a Hilbert space $\cH(K)$ consisting of $\cE$-valued functions $f$ 
    such that $K(\cdot, w) e \in \cH(K)$ for each $w \in \Omega$ and $e \in \cE$ and
has the reproducing property:
$$
\langle f, K(\cdot, w) e \rangle_{\cH(K)} = \langle f(w), e 
\rangle_{\cE} \text{ for all } f \in \cH(K).
$$

\item $K$ has a Kolmogorov decomposition:  there is an auxiliary 
Hilbert space $\cX$ and a function $H \colon \cX \to \cE$ so that
\begin{equation}   \label{Kolmogorov}
    K(z,w) = H(z) H(w)^{*}.
\end{equation}
In fact one can take $\cX$ to be the reproducing kernel Hilbert space 
$\cH(K)$ described in (2) above with $H(z) = \ev_{z} \colon f \mapsto 
f(z)$.
\end{enumerate}
\end{theorem}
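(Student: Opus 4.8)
The plan is to prove the equivalence of the three formulations of the positive-kernel condition in a cycle $(1)\Rightarrow(3)\Rightarrow(2)\Rightarrow(1)$, since the circular implication structure minimizes redundant work and the hard content is entirely in $(1)\Rightarrow(3)$.

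\emph{Step 1: $(1)\Rightarrow(3)$, the Kolmogorov decomposition.} First I would form the algebraic linear span $\cV_{0}$ of the formal symbols $\{K(\cdot,w)e \colon w\in\Omega,\ e\in\cE\}$ and define a sesquilinear form on $\cV_{0}$ by declaring
\begin{equation}  \label{Kform}
\Big\langle \sum_{j} K(\cdot,w_{j})e_{j},\ \sum_{i} K(\cdot,w_{i}')e_{i}'\Big\rangle := \sum_{i,j}\langle K(w_{i}',w_{j})e_{j},e_{i}'\rangle_{\cE}.
\end{equation}
The Aronszajn condition \eqref{posker} says exactly that this form is nonnegative on the diagonal. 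One must check the form is well defined (independent of the representation of an element as a finite sum), which follows from the Cauchy–Schwarz inequality for nonnegative sesquilinear forms applied to pairings with the spanning elements. Then I would pass to the quotient by the null space $\cN=\{v\in\cV_{0}\colon \langle v,v\rangle=0\}$ and complete to get a Hilbert space, which I would temporarily call $\cX$; the map $H(z)^{*}\colon \cE\to\cX$ sending $e\mapsto K(\cdot,z)e$ (descended to the quotient) and its adjoint $H(z)\colon \cX\to\cE$ then satisfy $K(z,w)=H(z)H(w)^{*}$ by the very definition \eqref{Kform}. I would observe that this $\cX$ is spanned by the ranges of the $H(w)^{*}$, a minimality property worth recording for the final sentence of the theorem.

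\emph{Step 2: $(3)\Rightarrow(2)$, reproducing kernel Hilbert space.} Given a Kolmogorov decomposition $K(z,w)=H(z)H(w)^{*}$ with $H\colon\cX\to\cE$ (writing $H$ for the fiberwise operators $H(z)\in\cL(\cX,\cE)$), I would define $\cH(K)$ as the image of $\cX$ under the map $x\mapsto(z\mapsto H(z)x)$, equipped with the norm transported from the orthogonal complement of the kernel of that map — equivalently, one restricts to $\cX\ominus\bigcap_{z}\ker H(z)$ and transports the inner product. One checks that $K(\cdot,w)e=(z\mapsto H(z)H(w)^{*}e)$ is the image of $H(w)^{*}e$, hence lies in $\cH(K)$, and that for $f\leftrightarrow x$ we have $\langle f,K(\cdot,w)e\rangle_{\cH(K)}=\langle x,H(w)^{*}e\rangle_{\cX}=\langle H(w)x,e\rangle_{\cE}=\langle f(w),e\rangle_{\cE}$, which is the reproducing property. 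To finish the last assertion of the theorem, I would note that when one starts from the $\cX$ constructed in Step 1, the map $x\mapsto(z\mapsto H(z)x)$ is already injective by the minimality observation, so $\cH(K)$ is isometrically the space $\cX$ of Step 1 and $H(z)=\ev_{z}$.

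\emph{Step 3: $(2)\Rightarrow(1)$.} This is the routine direction: given a reproducing kernel Hilbert space $\cH(K)$ with the stated properties, for any $z_{1},\dots,z_{N}\in\Omega$ and $e_{1},\dots,e_{N}\in\cE$ set $g=\sum_{i}K(\cdot,z_{i})e_{i}\in\cH(K)$; then $\langle g,g\rangle_{\cH(K)}=\sum_{i,j}\langle K(\cdot,z_{j})e_{j},K(\cdot,z_{i})e_{i}\rangle_{\cH(K)}=\sum_{i,j}\langle K(z_{i},z_{j})e_{j},e_{i}\rangle_{\cE}\ge 0$ by the reproducing property, which is \eqref{posker}. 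The only subtlety, which I would address briefly, is that these steps work verbatim whether $\cE$ is finite- or infinite-dimensional, since no part of the argument uses separability or dimension. The main obstacle in the whole proof is the well-definedness and the Cauchy–Schwarz bound in Step 1 that make the quotient-completion legitimate; everything else is bookkeeping. This is all standard (see e.g.~\cite{AMcC-book}), so in the actual write-up I would present Step 1 in full and indicate Steps 2 and 3 tersely.
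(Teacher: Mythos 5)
Your proof is correct, and the paper offers no proof of Theorem~\ref{T:posker} to compare against — it simply cites \cite{AMcC-book} — so you are in effect supplying the standard argument. Your route (build a pre-Hilbert space on the span of the kernel sections, invoke Cauchy--Schwarz to mod out the null space and complete, then read off the Kolmogorov factorization and the reproducing property) is exactly the approach the paper itself does spell out for the parallel $C^{*}$-module statement, Theorem~\ref{T:RKC*mod}; the only thing I would tighten is the well-definedness remark in Step 1, since if $\cV_{0}$ is the free vector space on the formal symbols then bilinear extension of \eqref{Kform} is automatic and Cauchy--Schwarz is needed only to pass to the quotient, whereas if $\cV_{0}$ is already a space of $\cE$-valued functions then well-definedness of the form does require the short reproducing computation you allude to.
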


Rather than using a positive kernel to construct a reproducing kernel 
Hilbert space as in condition (2) in Theorem \ref{T:posker}, it is 
also possible to construct a reproducing kernel Hilbert module as 
follows.  By a Hilbert module over a $C^{*}$-algebra $\mathfrak B$ we 
mean a linear space $E$ which is a right module over ${\mathfrak B}$ 
which is also equipped with an ${\mathfrak B}$-valued inner product
and satisfies additional compatibility requirements with respect to 
the algebra structure of ${\mathfrak B}$ (see \cite[Section 2.1]{RW}):
$$
\langle \cdot, \cdot \rangle_{E} \colon E \times E \to {\mathfrak B}
$$
which satisfies the usual inner product axioms:
\begin{enumerate}
    \item $\langle \lambda x + \mu y, z \rangle = \lambda  \langle x, 
    z \rangle + \mu \langle y, z \rangle$, 
    \item $\langle x\cdot b, y \rangle = \langle x, y \rangle  b$,
    \item $\langle x, y \rangle^{*} = \langle y, x \rangle$,
    \item $\langle x, x \rangle \ge 0$ (as an element of ${\mathfrak B}$),
    \item $\langle x, x \rangle = 0$ implies that $x = 0$,
    \item $E$ is complete in the norm given by $\|x \| = \| \langle 
    x, x \rangle \|^{1/2}_{{\mathfrak A}}$
\end{enumerate}
for all $x,y,z \in E$, $b \in {\mathfrak B}$ and $\lambda, \mu \in 
{\mathbb C}$.  (Here we follow the mathematicians'(rather than the 
physicists') convention that 
inner products are linear in the left slot; this departs from the 
standard usage in the operator-algebra literature.)
By modifying the construction of $\cH(K)$ in Theorem \ref{T:posker}, 
one can construct a $C^{*}$-module, denoted as $\bcH(K)$, over the 
$C^{*}$-algebra $\cL(\cE)$ characterized as follows.

\begin{theorem}  \label{T:RKC*mod}
    Suppose that $K \colon \Omega \times \Omega \to \cL(\cE)$ is a 
    positive kernel as in \eqref{posker}.  Then there is a uniquely 
    determined $C^{*}$-module $\bcH(K)$ over ${\mathfrak B} = \cL(\cE)$ 
    with the following properties:
    \begin{enumerate}
	\item $\bcH(K)$ consists of $\cL(\cE)$-valued functions on 
	$\Omega$,
	\item for each $w \in \Omega$, $K(\cdot, w)$ is in $\bcH(K)$ 
	and the span of such elements is dense in $\bcH(K)$, and
	\item for each $F \in \bcH(K)$,
	$$
	\langle F, K(\cdot, w) \rangle_{\bcH(K)} = F(w) \in \cL(\cE).
	$$
	\end{enumerate}
\end{theorem}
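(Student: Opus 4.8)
The plan is to carry out, for the $\cL(\cE)$-valued kernel $K$, the same GNS-style construction that produces the reproducing kernel Hilbert space $\cH(K)$ in Theorem~\ref{T:posker}, but retaining an $\cL(\cE)$-valued inner product instead of passing to a scalar one. First I would form the algebraic right $\cL(\cE)$-module $E_{0}$ consisting of the finite sums $F = \sum_{j} K(\cdot, w_{j})\, b_{j}$ with $w_{j} \in \Omega$ and $b_{j} \in \cL(\cE)$, regarded as the $\cL(\cE)$-valued function $z \mapsto \sum_{j} K(z, w_{j})\, b_{j}$ on $\Omega$, with module action $(F \cdot b)(z) = F(z)\, b$, and on $E_{0}$ I would define
\[
\Bigl\langle \sum_{i} K(\cdot, z_{i})\, a_{i},\ \sum_{j} K(\cdot, w_{j})\, b_{j} \Bigr\rangle_{\bcH(K)} := \sum_{i,j} b_{j}^{*}\, K(w_{j}, z_{i})\, a_{i} \in \cL(\cE).
\]
The conventions are arranged so that the reproducing identity $\langle F, K(\cdot, w) b\rangle = b^{*} F(w)$ holds; since the right-hand side depends only on $F$ as a function, this shows at once that the pairing is well defined on $E_{0}$, and the inner-product axioms (1)--(3) for a module over $\cL(\cE)$ follow by direct computation (one checks in particular that it is axiom~(2), $\langle x \cdot b, y\rangle = \langle x, y\rangle b$, that the formula makes true). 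For positivity (axiom~(4)), $\langle F, F\rangle \ge 0$ in $\cL(\cE)$, I would invoke the Kolmogorov decomposition $K(z,w) = H(z) H(w)^{*}$ from Theorem~\ref{T:posker}: with $T = \sum_{j} H(z_{j})^{*} b_{j} \in \cL(\cE, \cX)$ for $F = \sum_{j} K(\cdot, z_{j})\, b_{j}$, one gets $\langle F, F\rangle = T^{*} T \ge 0$.

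Next I would note that, thanks to the reproducing identity, the pairing is already a definite $\cL(\cE)$-valued inner product on $E_{0}$, so no quotient is needed: the Cauchy--Schwarz inequality for positive $\cL(\cE)$-valued forms shows that $\langle F, F\rangle = 0$ forces $F(w) = \langle F, K(\cdot, w)\rangle = 0$ for every $w$, i.e.\ $F = 0$ as a function. One then completes $E_{0}$ in the norm $\|F\| = \|\langle F, F\rangle\|^{1/2}$; the $\cL(\cE)$-valued inner product and the module action extend continuously to the completion $\bcH(K)$, which is thereby a Hilbert $C^{*}$-module over $\cL(\cE)$. These last facts are part of the standard theory of Hilbert $C^{*}$-modules (see \cite{RW}).

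To realize $\bcH(K)$ as a space of $\cL(\cE)$-valued functions on $\Omega$ (property~(1)), I would observe that for each fixed $w$ the map $F \mapsto \langle F, K(\cdot, w)\rangle$ is bounded, $\|\langle F, K(\cdot, w)\rangle\| \le \|K(w,w)\|^{1/2}\,\|F\|$, hence extends continuously from $E_{0}$ to $\bcH(K)$; setting $F(w) := \langle F, K(\cdot, w)\rangle$ gives a map of $\bcH(K)$ into the $\cL(\cE)$-valued functions on $\Omega$ that agrees with the original function values on $E_{0}$ and is injective, since $F(w) = 0$ for all $w$ makes $F$ orthogonal to the dense span of the $K(\cdot, w)\, b$ and hence $\langle F, F\rangle = 0$. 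Properties~(2) and (3) are then immediate. For uniqueness, if $\cM$ is any $C^{*}$-module over $\cL(\cE)$ satisfying (1)--(3), then property~(3) applied to $F = K(\cdot, z)$ together with property~(1) forces $\langle K(\cdot, z), K(\cdot, w)\rangle_{\cM} = K(w, z)$; thus $K(\cdot, w) \mapsto K(\cdot, w)$ is inner-product preserving on the dense spanning sets and extends to an isometric module isomorphism $\bcH(K) \to \cM$ which, by property~(3) again, intertwines the evaluation maps --- this is the asserted uniqueness.

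\textbf{Expected main obstacle.} There is no genuinely new difficulty: the only non-elementary inputs are standard Hilbert $C^{*}$-module facts (the module Cauchy--Schwarz inequality, continuity of the $\cL(\cE)$-valued inner product, and the completion of a pre-Hilbert $C^{*}$-module being a Hilbert $C^{*}$-module), and the one point requiring genuine care is the bookkeeping with the side/sign conventions, since the paper uses the mathematicians' left-linear convention for the $\cL(\cE)$-valued inner product while the module action is on the right. As an alternative one could bypass the completion step altogether by exhibiting $\bcH(K)$ concretely as the module of all $F \colon \Omega \to \cL(\cE)$ for which $w \mapsto F(w)\, e$ lies in $\cH(K)$ for each $e \in \cE$ and $\sup_{\|e\| \le 1} \|F(\cdot)\, e\|_{\cH(K)} < \infty$, with inner product determined by $\langle \langle F, G\rangle\, e, e'\rangle_{\cE} = \langle F(\cdot)\, e, G(\cdot)\, e'\rangle_{\cH(K)}$; then density of the span of the $K(\cdot, w)$ (uniform in $\|e\| \le 1$) is the only step still needing an argument.
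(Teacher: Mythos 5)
Your proof is correct and takes essentially the same route as the paper: define the $\cL(\cE)$-valued inner product on finite module combinations of kernel elements, establish positivity via the Kolmogorov decomposition, complete, and use the module Cauchy--Schwarz inequality (the paper's same citation, \cite[Lemma 2.5]{RW}) to extend the point evaluations $F \mapsto \langle F, K(\cdot,w)\rangle$ to the completion so its elements are again $\cL(\cE)$-valued functions. You supply more detail than the paper's four-sentence sketch --- in particular the observation that, once one regards $E_0$ as a space of functions, the form is already definite so the modding-out step is vacuous, and the uniqueness argument --- but the underlying construction is the same.
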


\begin{proof}  Define an inner product on a pair of kernel elements 
    $K(\cdot, w)$ and $K(\cdot, z)$ by
    $$
    \langle K(\cdot, w), K(\cdot, z) \rangle_{\bcH(K)} = K(z,w)
    $$
    and extend by linearity to the space of kernel elements.  Mod out 
    by any linear combinations having zero self inner product and 
    take the completion to arrive at the space $\bcH(K)$ having all 
    the asserted properties.  Note that there is a version of the 
    Cauchy-Schwarz inequality available (see \cite[Lemma 2.5]{RW}) 
    which guarantees that the point evaluation map $\ev \colon f 
    \mapsto f(w)$ extends to elements of the completion, and hence 
    elements of the completion can also be identified as 
    $\cL(\cE)$-valued functions on $\Omega$.
\end{proof}

It is natural now to take the next step and introduce the notion of 
$C^{*}$-corres\-pondence (see \cite{MS98}). Given two $C^{*}$-algebras 
${\mathfrak A}$ and ${\mathfrak B}$, by an $({\mathfrak A}, 
{\mathfrak B})$-correspondence we mean a Hilbert module $E$ over 
${\mathfrak B}$ which also carries a left ${\mathfrak A}$-action $x 
\mapsto a \cdot x$ which is a $*$-representation of ${\mathfrak A}$ 
with respect to the ${\mathfrak B}$-valued inner product on $E$:
$$
\langle a \cdot x, y \rangle_{E} = \langle x, a^{*} \cdot y 
\rangle_{E}.
$$

Given three 
$C^{*}$-algebras ${\mathfrak A}$, ${\mathfrak B}$ and ${\mathfrak C}$
together with an $({\mathfrak A}, {\mathfrak B}$)-correspondence $E$ 
and a $({\mathfrak B}, {\mathfrak C})$-correspondence $F$, the 
internal tensor product $E \otimes F$ of $E$ and $F$ is defined to be 
the $({\mathfrak A}, {\mathfrak C})$-correspondence generated as the 
Hausdorff completion of the  span of pure 
tensors $e \otimes f$ ($e \in E$ and $f \in F$) in the ${\mathfrak 
C}$-valued inner product given by
\begin{equation}  \label{tensor1}
\langle e \otimes f,\,  e' \otimes f' \rangle_{E \otimes F} =
\langle \left( \langle e, e' \rangle_{E} \right) \cdot f, \, f' 
\rangle_{F}
\end{equation}
with left ${\mathfrak A}$-action given by
\begin{equation}   \label{tensor2}
  a \cdot (e \otimes f) = (a \cdot e) \otimes f.
\end{equation}
It is routine to verify that one then gets the balancing property
\begin{equation}   \label{balance}  
e \otimes (b \cdot f) = (e \cdot b) \otimes f
\end{equation}
for $e \in E$, $f \in F$ and $b \in {\mathfrak B}$.  

We shall need a couple of applications of this internal 
tensor-product construction.  The first is as follows. 
For $K$ an $\cL(\cE)$-valued positive kernel on $\Omega$, we view the 
$C^{*}$-module over ${\mathfrak B}$ constructed in Theorem 
\ref{T:RKC*mod} as a $({\mathbb C}, \cL(\cE))$-correspondence.
For $\cX$ another coefficient Hilbert space, let $\cC_{2}(\cX, \cE)$ 
be the space of Hilbert-Schmidt class operators from $\cX$ into $\cE$.
Then $\cC_{2}(\cX, \cE)$ has a standard Hilbert-space inner product
$$
\langle T, T' \rangle_{\cC_{2}(\cX, \cE)} = \operatorname{tr} (T 
T^{\prime *}).
$$
We also have a left action of the $C^{*}$-algebra $\cL(\cE)$ on 
$\cC_{2}(\cX, \cE)$ via left multiplication:
$$
  X \cdot T = X T \text{ for } X \in \cL(\cE),\, T \in \cC_{2}(\cX, 
  \cE)
$$
and this action gives rise to a $*$-representation of $\cL(\cE)$ on 
$\cC_{2}(\cX, \cE)$:
\begin{align*}
  \langle X \cdot T, \, T' \rangle_{\cC_{2}(\cX, \cE)} &  = 
  \langle X  T, T' \rangle_{\cC_{2}(\cX, \cE)} = \operatorname{tr} 
  (XTT^{\prime *}) =  \operatorname{tr}(T T^{\prime *} X)    \\  & =
  \operatorname{tr}(T (X^{*} T')^{*}) =
  \langle T,\,  X^{*} \cdot T' \rangle_{\cC_{2}(\cX, \cE)}.
\end{align*}
In this way we may view $\cC_{2}(\cX, \cE)$ as an $(\cL(\cE), 
{\mathbb C})$-correspondence.  We may then form the internal 
$C^{*}$-correspondence tensor-product $\bcH(K) \otimes \cC_{2}(\cX, 
\cE)$. Explicitly, the inner product on pure tensors $F \otimes T$
($F \in \bcH(K)$, $T \in \cC_{2}(\cX, \cE)$ is given by
$$ 
\langle F \otimes T,\,  F' \otimes T' \rangle_{\bcH(K) \otimes 
\cC_{2}(\cX, \cE)} = \operatorname{tr} \left( \langle F, F' 
\rangle_{\bcH(K)} T T^{\prime *} \right).
$$

When we evaluate the first factor $F$ in a pure tensor $F \otimes T$ 
at  a point $w$ in $\Omega$, we get a tensor of the form 
$$
F(w) \otimes T  \in \cL(\cE) \otimes \cC_{2}(\cX, \cE) \cong 
\cC_{2}(\cX, \cE).
$$
To interpret this tensor product as a $C^{*}$-correspondence internal 
tensor product, we view  $\cL(\cE)$  as a $(\cL(\cE), 
\cL(\cE))$-correspondence with inner product
$\langle X, X' \rangle = X^{\prime *} X \in \cL(\cE)$ and left 
action given by left multiplication:
$X' \cdot X = X'X$.
The balancing property \eqref{balance} then leads to the 
identification $\cL(\cE) \otimes \cC_{2}(\cX, \cE) \cong \cC_{2}(\cX, 
\cE)$. 

Using a linearity and approximation argument, one can show that in 
fact elements $H$ of $\bcH(K) \otimes \cC_{2}(\cX, \cE)$ can be 
viewed as $\cC_{2}(\cX, \cE)$-valued functions on $\Omega$ such that 
$K(\cdot, w) U \in \bcH(K) \otimes \cC_{2}(\cX, \cE)$ for each $w \in 
\Omega$ and $U \in \cC_{2}(\cX, \cE)$, and the kernel element 
$K(\cdot, w)U$ has the reproducing property
$$
\langle G, K(\cdot, w) U \rangle_{\bcH(K) \otimes \cC_{2}(\cX, \cE)} 
= 
\langle G(w) , U \rangle_{\cC_{2}(\cX, \cE)} : = \operatorname{tr} 
\left( G(w) U^{*} \right).
$$
Thus $\bcH(K) \otimes \cC_{2}(\cX, \cU)$ is a reproducing kernel 
Hilbert space in the sense of Theorem \ref{T:posker} when we identify 
the range space $\cL(\cE)$ of $K$ as the subspace of $\cL(\cC_{2}(\cX, 
\cE))$ consisting of left multiplication operators by elements of 
$\cL(\cE)$:
$$
 X \in \cL(\cE) \mapsto L_{X} \in \cL(\cC_{2}(\cX, \cE)): \,
 L_{X} \colon T \mapsto X T
 $$
 and we view $\cC_{2}(\cX, \cE)$ as a Hilbert space in the inner 
 product
 $$
 \langle T, T' \rangle_{\C_{2}(\cX, \cE)} : = \operatorname{tr} 
 \left( T T^{\prime *} \right).
 $$
 In the sequel it will be convenient to use the shorthand notation
 \begin{equation}   \label{shorthand}
     \cH(K)_{\cX}: = \bcH(K) \otimes \cC_{2}(\cX, \cE).
 \end{equation}
 Note that in this notation, if $\cH(K)$ is as in Theorem 
 \ref{T:posker}, then we have $\cH(K) = \cH(K)_{{\mathbb C}}$.
 
 \begin{remark} \label{R:shorthand}
     {\em The space $\cH(K)_{\cX}$ could just as well have been 
     constructed as equal to the space $\cH(K) \otimes \cC_{2}(\cX, 
     {\mathbb C})$ where the spaces  $\cH(K)$ (defined as in Theorem 
     \ref{T:posker}) and $\cC_{2}(\cX, {\mathbb C})$ (the dual 
     version of the Hilbert space $\cX$) are viewed as $({\mathbb C}, 
     {\mathbb C})$-correspondences (i.e., as ordinary Hilbert 
     spaces), and the tensor product reduces to the standard 
     Hilbert-space tensor product. 
    } \end{remark}
 
 Suppose that we are given two coefficient Hilbert spaces $\cU$ and 
 $\cY$ and an $\cL(\cU, \cY)$-valued function $S$ on $\Omega$.  We 
 define the right multiplication operator $R_{S}$ by
 $$
 \left( R_{S}(F) \right)(z) = F(z) S(z).
 $$
 Thus $R_{S}$ maps $\cC_{2}(\cU, \cE)$-valued functions on $\Omega$ 
 to $\cC_{2}(\cU, \cE)$-valued functions on $\Omega$.  Given a 
 positive $\cL(\cE)$-valued kernel $K$ on $\Omega$, it is of 
 interest to determine exactly when $R_{S}$ maps $\cH(K)_{\cY}$ 
 boundedly (or contractively) into $\cH(K)_{\cU}$.  The answer is 
 given by the following theorem.
 
 \begin{theorem}  \label{T:RSbounded} Let $K$ be an $\cL(\cE)$-valued 
     positive kernel on $\Omega$ and $S$ an $\cL(\cU, \cY)$-valued 
     function on $\Omega$.  Then the right multiplication operator 
     $R_{S}$ is bounded as an operator from $\cH(K)_{\cY}$ to 
     $\cH(K)_{\cU}$  with $\| R_{S} \| \le M$ if and only if the 
     ${\mathbb C}$-valued kernel
     \begin{equation}  \label{kXSKM}
     k_{X,S,K,M}(z,w): = \operatorname{tr}\left[ X(w)^{*}( M^{2} 
     I_{\cU} - S(w)^{*} S(z)) X(z) K(z,w) \right]
     \end{equation}
     is a positive kernel on $\Omega$ for each choice of function $X 
     \colon \Omega \to \cC_{2}(\cE, \cU)$.
 \end{theorem}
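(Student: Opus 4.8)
The plan is to reduce the boundedness statement for $R_S$ on the big correspondence-tensor spaces $\cH(K)_{\cY}$ and $\cH(K)_{\cU}$ to a scalar positive-kernel condition by testing against the dense set of kernel elements. First I would recall from the discussion preceding the theorem that $\cH(K)_{\cY} = \bcH(K) \otimes \cC_2(\cY,\cE)$ is a reproducing kernel Hilbert space whose kernel elements are the functions $K(\cdot,w)U$ with $U \in \cC_2(\cY,\cE)$, and that such elements have dense span. A standard computation gives $R_S^*\big(K(\cdot,w)U\big) = K(\cdot,w)\,S(w)^*\!\cdot\! U$ as an element of $\cH(K)_{\cU}$ — this is the one computation I would actually carry out, using the reproducing property $\langle G, K(\cdot,w)U\rangle = \operatorname{tr}(G(w)U^*)$ together with $(R_S G)(z) = G(z)S(z)$. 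Then $\|R_S\| \le M$ is equivalent to $M^2 I - R_S R_S^* \succeq 0$, i.e. for all finite families $\{w_j\} \subset \Omega$ and $\{U_j\} \subset \cC_2(\cY,\cE)$,
\begin{equation*}
\sum_{i,j} \Big\langle \big(M^2 I - R_S R_S^*\big) K(\cdot,w_j)U_j,\; K(\cdot,w_i)U_i\Big\rangle_{\cH(K)_{\cY}} \ge 0.
\end{equation*}

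Next I would expand the inner product on the left. The term from $M^2 I$ contributes $\operatorname{tr}\big(M^2 I_{\cE}\, K(w_i,w_j)\, U_j U_i^*\big)$ after applying the reproducing property, and the term from $R_S R_S^*$ contributes $\operatorname{tr}\big(K(w_i,w_j)\, S(w_j)^* U_j U_i^* S(w_i)\big)$ — here I am using that right multiplication by $S$ interacts with the $\cC_2$-inner product via $\operatorname{tr}(AB^*)$ and that the $\cL(\cE)$-valued inner product on $\bcH(K)$ evaluates kernel elements to $K(w_i,w_j)$. Rearranging the trace (cyclicity) so that the $K$-factor is on the right, the quantity becomes
\begin{equation*}
\sum_{i,j} \operatorname{tr}\!\Big[ U_i^* \big( M^2 I_{\cU} - S(w_i)^* S(w_j)\big) U_j \; K(w_i,w_j)\Big],
\end{equation*}
wait — I need to be careful about which of $S(w_i),S(w_j)$ appears; matching against \eqref{kXSKM} with the roles of $z,w$ corresponding to $w_i,w_j$, this is exactly $\sum_{i,j} k_{X,S,K,M}(w_i,w_j)$ where $X(w_j) := U_j^* \in \cC_2(\cE,\cU)$ (noting $U_j \in \cC_2(\cY,\cE)$ has adjoint in $\cC_2(\cE,\cY)$, so one arranges the bookkeeping with $X$ valued in $\cC_2(\cE,\cU)$ by instead reading the pairing with $U_j$ playing the role of $X(w_j)^*$). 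So non-negativity of this sum for all choices of $\{w_j\}$ and all $\cC_2$-valued data is precisely the assertion that $k_{X,S,K,M}$ is a positive kernel for every $X\colon \Omega \to \cC_2(\cE,\cU)$.

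For the converse direction one runs the same identity backwards: if $k_{X,S,K,M}$ is positive for every $X$, then choosing $X$ supported on a finite set with prescribed values shows $M^2 I - R_S R_S^* \succeq 0$ on the dense span of kernel elements, hence $R_S^*$ (and so $R_S$) is bounded with norm $\le M$ and extends continuously. The one genuine subtlety — and the step I expect to require the most care rather than cleverness — is the passage between the $C^*$-module-valued inner product on $\bcH(K)$ and the scalar trace pairing on $\cH(K)_{\cX} = \bcH(K)\otimes \cC_2(\cX,\cE)$: one must verify that $R_S$ really is densely defined on kernel elements (i.e. $R_S$ maps $K(\cdot,w)U \in \cH(K)_{\cY}$ into $\cH(K)_{\cU}$, which follows because $R_S^*$ has the clean formula above and hence $R_S$ is the adjoint of a densely-defined operator) and that the trace identities above are legitimate given that $U_j U_i^*$ and the kernel values are only trace-class / bounded respectively. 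Because everything in sight involves finite sums of Hilbert–Schmidt operators, all traces converge absolutely and cyclicity is valid, so this is routine but must be stated. Aside from that, the proof is essentially the verbatim translation of the classical multiplier-is-contractive criterion to the $C^*$-correspondence setting.
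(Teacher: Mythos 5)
Your strategy is the same as the paper's: compute the action of the adjoint on kernel elements, expand the norm inequality on the dense span, and use cyclicity of the trace to recognize the kernel \eqref{kXSKM} with $X(z_i) = U_i^*$. However, the bookkeeping — which you yourself flag as the one place requiring care — is in fact off in several spots, and sorting it out is not cosmetic.

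First, the adjoint formula. For $U \in \cC_2(\cU,\cE)$ (a kernel element of $\cH(K)_{\cU}$, the \emph{codomain} of $R_S$), the correct formula is
\[
R_S^*\bigl(K(\cdot,w)U\bigr) = K(\cdot,w)\,U\,S(w)^* \in \cH(K)_{\cY},
\]
i.e.\ the module element is acted on by $S(w)^*$ on the \emph{right}, and the result lives in $\cH(K)_{\cY}$, not $\cH(K)_{\cU}$ as you wrote. Your $K(\cdot,w)S(w)^*\cdot U$ does not type-check: $S(w)^*\colon \cY\to\cU$ cannot be composed with $U$ on either side regardless of whether $U\in\cC_2(\cU,\cE)$ or $\cC_2(\cY,\cE)$. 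Second, you test $M^2I - R_S R_S^*\succeq 0$, which is an inequality of operators on $\cH(K)_{\cU}$, but you write the inner products in $\cH(K)_{\cY}$ with $U_j\in\cC_2(\cY,\cE)$; the test vectors should be kernel elements $K(\cdot,w_j)U_j$ of $\cH(K)_{\cU}$ with $U_j\in\cC_2(\cU,\cE)$. Third, the displayed intermediate expression $\operatorname{tr}\bigl(K(w_i,w_j)S(w_j)^*U_jU_i^*S(w_i)\bigr)$ again fails to compose. The correct expansion, using the module inner product and the adjoint formula above, is
\[
\sum_{i,j}\operatorname{tr}\Bigl(K(z_i,z_j)\,U_j\bigl(M^2 I_{\cU}-S(z_j)^*S(z_i)\bigr)U_i^*\Bigr),
\]
and a single cyclic permutation bringing $K(z_i,z_j)$ to the far right plus the identification $X(z_i)=U_i^*\in\cC_2(\cE,\cU)$ gives \eqref{kXSKM} exactly.

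Finally, the ``subtlety'' you worry about — whether $R_S$ maps kernel elements of $\cH(K)_{\cY}$ into $\cH(K)_{\cU}$ — is a red herring once the domains are straightened out. The paper works entirely with $R_S^*$ restricted to the span of kernel elements in $\cH(K)_{\cU}$, which visibly maps kernel elements to kernel elements; one never needs to apply $R_S$ directly to a kernel element. Boundedness of $R_S$ then follows from boundedness of this densely-defined map, together with the reproducing-property check that its Hilbert-space adjoint acts by $f\mapsto f\cdot S$. So your overall plan is right, but the type-checking you deferred is precisely where the errors accumulated, and the cleaner domain choices dissolve the one issue you identified as potentially delicate.
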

 
 \begin{proof}
    By rescaling it suffices to consider the case $M=1$ and $\| R_{S} 
    \| \le 1$.
    
    The computation
\begin{align*}
    \langle R_{S}f, K(\cdot, w) U \rangle_{\cH(K)_{\cU}} & =
    \langle f(w) S(w), U \rangle_{\cC^{2}(\cU, \cE)}  \\
    & = \operatorname{tr} \left(f(w) S(w) U^{*} \right) \\
    & = \operatorname{tr} \left(f(w) (U S(w)^{*}) \right)  \\
    & = \langle f, K(\cdot, w) U S(w)^{*} \rangle_{\cH(K)_{\cY}}
\end{align*}
shows that
$$
  (R_{S})^{*} \colon K(\cdot, w) U \mapsto K(\cdot, w) U S(w)^{*}
$$
whenever $R_{S}$ is well defined as an element of $\cL(\cH(K)_{\cY}, 
\cH(K)_{\cU})$.  As elements of the form $\sum_{j=1}^{N} K(\cdot, 
z_{j}) U_{j}$ are dense in $\cH(K)_{\cU}$, we see that $\| R_{S} \| 
\le 1$ holds if and only if
\begin{align*}
    0 \le & \left\| \sum_{j=1}^{N} K(\cdot, z_{j}) U_{j} 
    \right\|^{2} - \left\| 
    R_{S}^{*} \left( \sum_{j=1}^{N} K(\cdot, z_{j}) U_{j} \right) 
    \right\|^{2}  \\
    = &  \left\| \sum_{j=1}^{N} K(\cdot, z_{j}) U_{j} \right\|^{2} -
    \left\| \sum_{j=1}^{N} K(\cdot, z_{j}) U_{j} S(z_{j})^{*} 
    \right\|^{2}
 \end{align*}
 holds for all choices of $z_{1}, \dots, z_{N} \in \Omega$ and 
 $U_{1}, \dots, U_{N} \in \cC_{2}(\cU, \cE)$ and $N = 1,2, \dots$.
 Expanding out self inner products and using the invariance of the 
 trace under cyclic permutations converts this condition to
 \begin{align*}
     0 & \le \sum_{i,j=1}^{N} \operatorname{tr} \left( K(z_{i}, z_{j}) 
     U_{j} U_{i}^{*} - K(z_{i}, z_{j}) U_{j} S(z_{j})^{*} S(z_{i}) 
     U_{i}^{*} \right)   \\
     & = \sum_{i,j = 1}^{N} \operatorname{tr} \left( U_{j}(I - 
     S(z_{j})^{*} S(z_{i})) U_{i}^{*} K(z_{i} ,z_{j}) \right)  \\
     & = \sum_{i,j=1}^{N} \operatorname{tr} \left( X(z_{j})^{*} (I - 
     S(z_{j})^{*} S(z_{i})) X(z_{i}) K(z_{i}, z_{j}) \right)
 \end{align*}
 where we have set $X(z_{i}) = U_{i}^{*}$.  This positivity condition 
 holding for all choices of $z_{1}, \dots, z_{N} \in \Omega$ and 
 $X(z_{1}), \dots, X(z_{N}) \in \cC_{2}(\cE, \cU)$ for all $N = 1, 2, 
 \dots$ in turn is equivalent to the positivity of the kernel 
 $k_{X,S,K,1}$ on $\Omega$ for all choices of $X \colon \Omega \to 
 \cC_{2}(\cE, \cU)$.
  \end{proof}

  We shall also need a characterization of functional Hilbert spaces 
 of the form $\cH(K)_{\cX}$.
 
 \begin{theorem}  \label{T:HKXconverse}
     Suppose that $\cH$ is a Hilbert space whose elements are 
     $\cC_{2}(\cX, \cE)$-valued functions on $\Omega$.  Then there is 
     an $\cL(\cE)$-valued positive kernel $K$ on $\Omega$ such that 
     $\cH$ is isometrically equal to $\cH(K)_{\cX}$ if and only if
     \begin{enumerate}
	 \item the point evaluation map $\ev_{w} \colon f \mapsto 
	 f(w)$ defines a bounded operator from $\cH$ into 
	 $\cC_{2}(\cX, \cE)$ fo each $w \in \Omega$, and
	 \item $\cH$ is a right module over $\cL(\cX)$ with the right 
	 action of $\cL(\cX)$ commuting with each point evaluation 
	 map $\ev_{w}$:
	 \begin{equation}   \label{rightintertwine}
	 \ev_{w}(f \cdot X) = (\ev_{w} f) X \text{  or } (f \cdot 
	 X)(w) = f(w) X \text{ for all } w \in \Omega.
	 \end{equation}
 \end{enumerate}
  \end{theorem}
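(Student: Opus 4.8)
The plan is to run the Moore--Aronszajn construction in the present module setting, using the reproducing-kernel dictionary recorded in Theorem~\ref{T:RKC*mod} and in the discussion of $\cH(K)_{\cX}=\bcH(K)\otimes\cC_{2}(\cX,\cE)$ preceding the statement. The forward implication is essentially contained in that discussion: if $\cH=\cH(K)_{\cX}$, then the reproducing formula $\langle G,K(\cdot,w)U\rangle_{\cH}=\operatorname{tr}(G(w)U^{*})$ together with $\|K(\cdot,w)U\|^{2}=\operatorname{tr}(K(w,w)UU^{*})\le\|K(w,w)\|\,\|U\|_{\cC_{2}(\cX,\cE)}^{2}$ shows that $\ev_{w}$ is bounded into $\cC_{2}(\cX,\cE)$, while the built-in right action $(F\otimes U)\cdot X:=F\otimes(UX)$ visibly satisfies $(f\cdot X)(w)=f(w)X$. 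So all the work is in the converse.

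Assume (1) and (2). By (1), for each $w\in\Omega$ the operator $\ev_{w}\colon\cH\to\cC_{2}(\cX,\cE)$ has a bounded adjoint; for $U\in\cC_{2}(\cX,\cE)$ I would set the kernel element $k_{w}U:=\ev_{w}^{*}U\in\cH$, so that $\langle f,k_{w}U\rangle_{\cH}=\langle f(w),U\rangle_{\cC_{2}(\cX,\cE)}=\operatorname{tr}(f(w)U^{*})$ for every $f\in\cH$. Since the trace pairing on $\cC_{2}(\cX,\cE)$ is nondegenerate, any $f\in\cH$ orthogonal to all the $k_{w}U$ vanishes at every point of $\Omega$, hence equals $0$; thus the linear span of the kernel elements is dense in $\cH$.

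The heart of the matter is to manufacture an $\cL(\cE)$-valued kernel. For $z,w\in\Omega$ I would consider the bounded operator $T_{z,w}:=\ev_{z}\ev_{w}^{*}\in\cL(\cC_{2}(\cX,\cE))$, so that $(k_{w}U)(z)=T_{z,w}U$. The claim is that $T_{z,w}$ commutes with right multiplication $R_{X}\colon U\mapsto UX$ for every $X\in\cL(\cX)$. Using that the right $\cL(\cX)$-action of hypothesis (2) is a $*$-representation (so that $\langle f\cdot X,g\rangle_{\cH}=\langle f,g\cdot X^{*}\rangle_{\cH}$ --- the compatibility with the Hilbert-space structure implicit in the module hypothesis), one checks $\ev_{w}^{*}(UX)=(\ev_{w}^{*}U)\cdot X$ by testing against $f\in\cH$, and then (2) gives $T_{z,w}(UX)=\ev_{z}((\ev_{w}^{*}U)\cdot X)=(\ev_{z}\ev_{w}^{*}U)X=(T_{z,w}U)X$. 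Then I would invoke the commutation theorem for the Hilbert-space factorization $\cC_{2}(\cX,\cE)\cong\cE\otimes\overline{\cX}$, under which left multiplication by $\cL(\cE)$ becomes $\cL(\cE)\otimes I$ and right multiplication by $\cL(\cX)$ becomes $I\otimes\cL(\overline{\cX})$: the commutant of $\{R_{X}\colon X\in\cL(\cX)\}$ inside $\cL(\cC_{2}(\cX,\cE))$ is exactly $\{L_{A}\colon A\in\cL(\cE)\}$, where $L_{A}U=AU$. Hence there is a unique $K(z,w)\in\cL(\cE)$ with $T_{z,w}=L_{K(z,w)}$, i.e.\ $(k_{w}U)(z)=K(z,w)U$. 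Positivity of $K$ in the sense of \eqref{posker} is then read off from $\sum_{i,j=1}^{N}\operatorname{tr}\!\big(K(z_{i},z_{j})U_{j}U_{i}^{*}\big)=\big\|\sum_{j=1}^{N}k_{z_{j}}U_{j}\big\|_{\cH}^{2}\ge0$, specialized to rank-one $U_{j}=e_{j}\langle\,\cdot\,,\xi\rangle_{\cX}$ for a fixed nonzero $\xi\in\cX$, which turns the left-hand side into $\|\xi\|^{2}\sum_{i,j}\langle K(z_{i},z_{j})e_{j},e_{i}\rangle_{\cE}$.

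It remains to assemble the isometric identification. The assignment $K(\cdot,w)U\mapsto k_{w}U$ extends linearly to a map from the (dense) span of kernel elements of $\cH(K)_{\cX}$ onto the (dense) span of kernel elements of $\cH$, and it preserves inner products because $\langle K(\cdot,w)U,K(\cdot,z)V\rangle_{\cH(K)_{\cX}}=\operatorname{tr}(K(z,w)UV^{*})$ (recorded before the statement) equals $\langle k_{w}U,k_{z}V\rangle_{\cH}=\langle T_{z,w}U,V\rangle_{\cC_{2}(\cX,\cE)}=\operatorname{tr}(K(z,w)UV^{*})$. Hence it extends to a unitary $\Phi\colon\cH(K)_{\cX}\to\cH$. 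Since point evaluations are bounded on both spaces and $\Phi$ carries each $K(\cdot,w)U$ to a function with the same values $z\mapsto K(z,w)U$, a routine limiting argument shows that $\Phi(G)$ and $G$ agree as $\cC_{2}(\cX,\cE)$-valued functions for every $G\in\cH(K)_{\cX}$; therefore $\cH=\cH(K)_{\cX}$ isometrically, and $K$ is unique since $K(z,w)$ is determined by $T_{z,w}=\ev_{z}\ev_{w}^{*}$. I expect the third paragraph to be the main obstacle: identifying $\ev_{z}\ev_{w}^{*}$ with a left-multiplication operator requires both the operator-algebra input (the tensor-product commutation theorem pinning $\{L_{A}\}$ as the commutant of $\{R_{X}\}$) and a careful, $*$-compatible use of the module hypothesis; everything else parallels the classical reproducing-kernel bookkeeping.
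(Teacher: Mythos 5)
Your proof is correct and follows essentially the same route as the paper's (brief) argument: apply the standard reproducing-kernel construction to get the ``big'' $\cL(\cC_{2}(\cX,\cE))$-valued kernel ${\mathbf K}(z,w)=\ev_{z}\ev_{w}^{*}$, use the intertwining condition \eqref{rightintertwine} to show ${\mathbf K}(z,w)$ commutes with every $R_{X}$, invoke the commutant identification $\{R_{X}\colon X\in\cL(\cX)\}'=\{L_{A}\colon A\in\cL(\cE)\}$ under $\cC_{2}(\cX,\cE)\cong\cE\otimes\overline{\cX}$ to write ${\mathbf K}(z,w)=L_{K(z,w)}$, and then check positivity and the isometric identification. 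One thing worth flagging, and which you correctly noticed, is that the step $T_{z,w}R_{X}=R_{X}T_{z,w}$ does not follow from \eqref{rightintertwine} alone; it requires in addition that the right $\cL(\cX)$-action be $*$-compatible with the $\cH$-inner product (i.e.\ $\langle f\cdot X,g\rangle_{\cH}=\langle f,g\cdot X^{*}\rangle_{\cH}$), which the paper leaves implicit in the phrase ``right module'' but which is genuinely needed---without it one can build counterexamples (e.g.\ $\Omega$ a singleton, $\cE=\mathbb{C}$, $\cH=\cC_{2}(\cX,\mathbb{C})$ with inner product twisted by a non-scalar positive $A\in\cL(\cX)$).
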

  
  \begin{proof}
      By Theorem \ref{T:posker}, from the fact that the point 
      evaluations $\ev_{w}$  are 
      bounded, we get that $\cH = \cH({\mathbf K})$ for an 
      $\cL(\cC_{2}(\cX, \cE))$-valued positive kernel ${\mathbf 
      K}(z,w) = \ev_{z} \cdot (\ev_{w})^{*}$.  The additional 
      condition \eqref{rightintertwine} then implies that ${\mathbf 
      K}(z,w)$ commutes with the right multiplication operators 
      $R_{X} \colon T \mapsto TX$ on $\cC_{2}(\cX, \cE)$ ($X \in 
      \cL(\cX)$).  This is enough to force ${\mathbf K}(z,w)$ to be a 
      left multiplication operator ${\mathbf K}(z,w) = L_{K(z,w)}$ for 
      a $K(z,w) \in \cL(\cE)$.  One next verifies that $K$ so 
      constructed is an $\cL(\cE)$-valued positive kernel and that we 
      recover $\cH$ as $\cH = \cH(K)_{\cX}$.
   \end{proof}
   
   We shall also have use for a far-reaching generalization of the
   positive kernels discussed so far introduced by Barreto, Bhat, 
   Liebscher, and Skeide in \cite{BBLS}.  Given two $C^{*}$-algebras 
   ${\mathfrak A}$ and ${\mathfrak B}$, we say that a function 
   $\Gamma$ on $\Omega \times \Omega$ with values in $\cL({\mathfrak 
   A}, {\mathfrak B})$ is a {\em completely positive kernel} if
   \begin{equation}  \label{cpker}
       \sum_{i,j=1}^{N} b_{i}^{*} \Gamma(z_{i}, z_{j}) [a_{i}^{*} 
       a_{j}] b_{j} \ge 0 \text{ (in ${\mathfrak B}$)}
   \end{equation}
   for all choices of $z_{1}, \dots, z_{N} \in \Omega$, $a_{1}, 
   \dots, a_{N} \in {\mathfrak A}$, $b_{1}, \dots, b_{N} \in 
   {\mathfrak B}$ for all $N = 1,2, \dots$.  The following 
   characterization of completely positive kernels is the completely 
   positive parallel to Theorems \ref{T:posker} and \ref{T:RKC*mod}.
   
   \begin{theorem}  \label{T:cpker} (See \cite{BBLS, BBFtH}.)
       Given a function $\Gamma$ on $\Omega \times \Omega$ with 
       values in $\cL({\mathfrak A}, {\mathfrak B})$, the following 
       are equivalent:
       \begin{enumerate}
       \item $\Gamma$ is a completely positive kernel, i.e., 
       condition \eqref{cpker} holds.
       
       \item There is an $({\mathfrak A}, {\mathfrak 
       B})$-correspondence $\bcH(\Gamma)$ whose elements consist of 
       $\cB$-valued functions $f$ on $\Omega$ such that $K(\cdot, 
       w)[a] \in \bcH(\Gamma)$ for each $w \in \Omega$ and $a \in 
       {\mathfrak A}$ and such that
       $$
       \langle f, K(\cdot, w)[a] \rangle_{\bcH(\Gamma)} = \left( 
       a^{*} \cdot f\right)(w)
       $$
       for all $f \in \bcH(\Gamma)$, $a \in {\mathfrak A}$, and $w \in 
       \Omega$.
       
       \item $K$ has a Kolmogorov decomposition of the following 
       form:  there is an $({\mathfrak A}, {\mathfrak 
       B})$-correspondence $\bcH$ and a function $H$ on $\Omega$ with 
       values in the space $\cL(\bcH, {\mathfrak B})$ of adjointable 
       operators from $\bcH$ to ${\mathfrak B}$ so that
       $$
         K(z,w)[a] = H(z) \pi(a) H(w)^{*}.
    $$
    Here $a \mapsto \pi(a)$ represents the left ${\mathfrak 
    A}$-action on $\bcH$:  $\pi(a) f = a \cdot f$ for $f \in \bcH$.
       \end{enumerate}
    In case ${\mathfrak B} = \cL(\cE)$ for a Hilbert space $\cE$, 
    then we also have Hilbert space versions of conditions (2) and 
    (3):
    \begin{enumerate}
	\item[(2$^{\prime}$)]  There is an 
	$({\mathfrak A}, {\mathbb C})$-correspondence  $\cH(\Gamma)$ 
	(i.e., a Hilbert space	$\cH(\Gamma)$ equipped with a $*$-representation $\pi \colon 
	{\mathfrak A} \to \cL(\cH(\Gamma))$ of 
	${\mathfrak A}$) whose elements are $\cE$-valued functions $f$ 
	on $\Omega$ such that $K(\cdot, w)[a] e \in \cH(\Gamma)$ for 
	each $w \in \Omega$, $a \in {\mathfrak A}$, $e \in \cE$, and 
	such that
	$$
	\langle f, K(\cdot, w)[a]e \rangle_{\cH(\Gamma)} = \langle 
	\left( a^{*} \cdot f \right)(w), \, e \rangle_{\cE}
	$$
	for all $f \in \cH(\Gamma)$, $a \in {\mathfrak A}$, $w \in 
	\Omega$.
	
	\item[(3$^{\prime}$)]  There exists a Hilbert space $\cH$ 
	carrying a $*$-representation $\pi$ of ${\mathfrak A}$ and 
	there exists a function $H \colon \Omega \to \cL(\cH, \cE)$ 
	so that
	$$
	K(z,w)[a] = H(z) \pi(a) H(w)^{*}.
	$$
	\end{enumerate}
       \end{theorem}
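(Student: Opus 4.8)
The plan is to follow the GNS / Kolmogorov-dilation recipe already used in the proofs of Theorems~\ref{T:posker} and~\ref{T:RKC*mod} (the result itself is from \cite{BBLS}, with \cite{BBFtH} a convenient reference), establishing the cycle (1)$\Rightarrow$(2)$\Rightarrow$(3)$\Rightarrow$(1) and then deriving the primed Hilbert-space versions (2$'$), (3$'$) by the interior tensor-product construction of Section~\ref{S:kernels} (or by rerunning the same argument with a scalar-valued inner product).

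For (1)$\Rightarrow$(2) I would first form the algebraic right $\mathfrak B$-module $\bcH_{0}$ freely generated by formal symbols $k_{w}[a]$ ($w\in\Omega$, $a\in\mathfrak A$), so that a general element is a finite sum $\sum_{i}k_{w_{i}}[a_{i}]\cdot b_{i}$, and equip it with the $\mathfrak B$-valued sesquilinear pairing determined on generators by $\langle k_{w}[a],k_{z}[a']\rangle:=\Gamma(z,w)[(a')^{*}a]$ together with the module identities $\langle x\cdot b,y\rangle=\langle x,y\rangle b$ and $\langle x,y\cdot b\rangle=b^{*}\langle x,y\rangle$; expanding a self-pairing $\langle\sum_{i}k_{z_{i}}[a_{i}]b_{i},\sum_{j}k_{z_{j}}[a_{j}]b_{j}\rangle$ recovers exactly the left-hand side of \eqref{cpker}, so the pairing is nonnegative. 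Next I would invoke the $C^{*}$-module Cauchy--Schwarz inequality (\cite[Lemma~2.5]{RW}) to see that the null set is a right submodule, pass to the quotient, and complete in the norm $\|x\|=\|\langle x,x\rangle\|^{1/2}_{\mathfrak B}$ to obtain a Hilbert $\mathfrak B$-module $\bcH(\Gamma)$ with the kernel elements dense. I would then define the left $\mathfrak A$-action by $c\cdot k_{w}[a]:=k_{w}[ca]$; the computation $\langle c\cdot k_{w}[a],k_{z}[a']\rangle=\Gamma(z,w)[(a')^{*}ca]=\langle k_{w}[a],c^{*}\cdot k_{z}[a']\rangle$ gives the $*$-representation property, and the estimate $\langle c\cdot x,c\cdot x\rangle=\langle x,(c^{*}c)\cdot x\rangle\le\|c\|^{2}\langle x,x\rangle$ (using $\|c\|^{2}1-c^{*}c\ge0$ in $\mathfrak A$) shows the action is well-defined on the quotient and bounded, hence extends to all of $\bcH(\Gamma)$, making it an $(\mathfrak A,\mathfrak B)$-correspondence. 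Finally I would identify each $f$ in the completion with the $\mathfrak B$-valued function $w\mapsto\langle f,k_{w}[1]\rangle$ (replacing $1$ by an approximate unit of $\mathfrak A$ when $\mathfrak A$ is non-unital); continuity of the point-evaluation functionals then realizes each $f$ as a function on $\Omega$, and $(a^{*}\cdot f)(w)=\langle f,a\cdot k_{w}[1]\rangle=\langle f,k_{w}[a]\rangle$ is the reproducing property of~(2).

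For (2)$\Rightarrow$(3) I would take $H(w)\colon\bcH(\Gamma)\to\mathfrak B$ to be the point evaluation $f\mapsto f(w)$, note that it is adjointable with $H(w)^{*}b=k_{w}[1]\cdot b$, and use $k_{w}[a]=\pi(a)k_{w}[1]$ to read off $\Gamma(z,w)[a]=(k_{w}[a])(z)=H(z)\pi(a)H(w)^{*}$ (identifying $\mathfrak B\subseteq\cL(\mathfrak B)$ via left multiplication). For (3)$\Rightarrow$(1), given $K(z,w)[a]=H(z)\pi(a)H(w)^{*}$, I would set $\xi=\sum_{i}\pi(a_{i})H(z_{i})^{*}b_{i}$ and, using adjointability of $H(z_{i})\pi(a_{i})^{*}$, rewrite $\sum_{i,j}b_{i}^{*}\Gamma(z_{i},z_{j})[a_{i}^{*}a_{j}]b_{j}=\langle\xi,\xi\rangle_{\bcH}\ge0$, which is \eqref{cpker}. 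The primed versions follow by setting $\mathfrak B=\cL(\cE)$ and replacing $\bcH(\Gamma)$ by the ordinary Hilbert space $\bcH(\Gamma)\otimes_{\cL(\cE)}\cE$ (with $\cE$ viewed as an $(\cL(\cE),\mathbb C)$-correspondence as in Section~\ref{S:kernels}), on which $\pi(\cdot)\otimes I_{\cE}$ is a $*$-representation and the maps $H(w)$ tensor to maps into $\cE$; alternatively one reruns the GNS construction directly with the scalar pairing $\langle k_{w}[a]e,k_{z}[a']e'\rangle=\langle\Gamma(w,z)[a^{*}a']e',e\rangle_{\cE}$ on symbols $k_{w}[a]e$, $e\in\cE$.

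The main obstacle, as is typical for this kind of argument, is not any single computation but the bookkeeping needed to realize abstract module elements as honest $\mathfrak B$-valued (resp.\ $\cE$-valued) functions and to verify the reproducing identity \emph{there}: one needs continuity of the point-evaluation functionals on the completion, and — when $\mathfrak A$ or $\mathfrak B$ fails to be unital — an approximate-identity argument to make sense of expressions like $k_{w}[1]$ and of the Kolmogorov formula. Checking that the left $\mathfrak A$-action descends to a bounded, $*$-preserving representation on the quotient is the other delicate point. In the applications of this paper $\mathfrak A$ and $\mathfrak B$ will be unital ($C(\Psi_{\beta})$ and $\cL(\cE)$), so these subtleties can be handled quickly; for the fully general statement we refer to \cite{BBLS}.
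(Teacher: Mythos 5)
The paper does not prove Theorem~\ref{T:cpker}; it simply cites \cite{BBLS, BBFtH} and moves on, so there is no in-text proof to compare against. Your proposal is the standard GNS/Kolmogorov-decomposition argument that appears in those references (and parallels the constructions of Theorems~\ref{T:posker} and~\ref{T:RKC*mod} earlier in Section~\ref{S:kernels}), and as far as I can check it is correct: the pairing $\langle k_{w}[a],k_{z}[a']\rangle=\Gamma(z,w)[(a')^{*}a]$ is the right one for the paper's left-linear convention (the self-pairing of $\sum_{i}k_{z_{i}}[a_{i}]\cdot b_{i}$ recovers~\eqref{cpker} after a relabeling of indices, and the reproducing identity $\langle f, K(\cdot,w)[a]\rangle = (a^{*}\cdot f)(w)$ comes out with the correct arguments), the Cauchy--Schwarz quotient and the boundedness estimate $\langle c\cdot x,c\cdot x\rangle\le\|c\|^{2}\langle x,x\rangle$ via positivity of $\|c\|^{2}1-c^{*}c$ are handled properly, and the passage to the primed Hilbert-space versions by internally tensoring $\bcH(\Gamma)\otimes_{\cL(\cE)}\cE$ (with $\cE$ viewed as an $(\cL(\cE),\mathbb{C})$-correspondence) is precisely the mechanism the paper itself uses in Section~\ref{S:kernels}. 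You also correctly flag the approximate-unit subtlety, which is immaterial for the applications here since $\mathfrak{A}=C_{b}(\Psi,\cL(\cY_{T}))$ and $\mathfrak{B}=\cL(\cE)$ are unital.
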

       
    \begin{remark}  \label{R:RSbounded}  
      {\em The positivity condition in 
      Theorem \ref{T:RSbounded} can be equivalently formulated as the 
      condition that the kernel
      $$
      k_{\Gamma, S, K}(z,w) = \left[ \Gamma(z,w)[I - S(w)^{*} S(z) 
      ],\, K(z,w) \right]_{\cC_{1}(\cE) \times \cL(\cE)}
      $$
 be a positive ${\mathbb C}$-valued kernel on $\Omega$ for every 
 choice of completely positive kernel 
 $$
 \Gamma \colon \Omega \times \Omega \to\cL(\cL(\cU), \cC_{1}(\cE)),
 $$
 where the outside bracket 
 $$
 [ \cdot, \cdot ]_{\cC_{1}(\cE) \times \cL(\cE)}
 $$ 
 is the duality pairing between the trace-class operators $\cC_{1}(\cE)$ and 
 the bounded linear operators $\cL(\cE)$.
 }\end{remark} 
   
   \subsection{$\Psi$-unitary colligations}  
   \label{S:colligations}
   
   For the transfer-function realization
   $$
   S(z) = D + z C (I - zA)^{-1}B
   $$
   in the operator-valued test-function setting to be developed in the 
   sequel, we shall need a more elaborate version of the unitary 
   colligation matrix $\bU = \left[ \begin{smallmatrix} A & B \\ C & 
   D \end{smallmatrix} \right]$ which we now describe.  Given a 
   collection of test functions $\Psi$ as in Section \ref{S:test}, as 
   described there we view $\Psi$ as a completely regular topological 
   space.  Then the space $C_{b}(\Psi, \cL(\cY_{T}))$ of bounded 
   $\cL(\cY_{T})$-valued functions on $\Psi$ is a $C^{*}$-algebra 
   while the space $C_{b}(\Psi, \cL(\cY_{T}, \cU_{T}))$ of continuous 
   $\cL(\cY_{T}, \cU_{T})$-valued functions is not (unless $\cU_{T} = 
   \cY_{T}$).  However we may view $C_{b}(\Psi, \cL(\cY_{T}, 
   \cU_{T}))$ as a $(C_{b}(\Psi,, \cU_{T}), C_{b}(\Psi, 
   \cY_{T}))$-corres\-pondence, with $C_{b}(\Psi, \cL(\cY_{T}))$-valued 
   inner product given by
   $$
   \left( \langle F, \, F' \rangle_{C_{b}(\Psi, \cL(\cY_{T}, 
   \cU_{T}))} \right) (\psi) : = F'(\psi)^{*} F(\psi).
   $$
   If $\cX$ is a Hilbert space carrying a $*$-representation $\rho$ of 
   $C_{b}(\Psi, \cL(\cY_{T}))$, then we may view $\cX$ as a 
   $(C_{b}(\Psi, \cL(\cY_{T})), {\mathbb C})$ correspondence (with 
   the representation $\rho$ providing the left $C_{b}(\Psi, 
   \cL(\cY_{T}))$-action on $\cX$) and form 
   the internal tensor product $C_{b}(\Psi, \cL(\cY_{T}, \cU_{T})) 
   \otimes_{\rho} \cX$.  We shall say that a  $2 \times 2$-block 
   unitary matrix $\bU = \left[ \begin{smallmatrix} A & B \\ C & D 
   \end{smallmatrix} \right]$ is a {\em $\Psi$-unitary colligation} 
   if $\bU$ has the form
   $$
   \bU = \begin{bmatrix} A & B  \\ C & D \end{bmatrix} \colon 
   \begin{bmatrix} \cX \\ \cU \end{bmatrix} \to \begin{bmatrix} 
       C_{b}(\Psi, \cL(\cY_{T}, \cU_{T})) \otimes_{\rho} \cX \\ \cY 
       \end{bmatrix}
    $$
    for $X$ equal to a Hilbert space carrying a $*$-representation 
    $\rho$ of $C_{b}(\Psi, \cL(\cY_{T}))$.
    
    A particular element of $C_{b}(\Psi, \cL(\cY_{T}, \cU_{T}))$ is the 
    function ${\mathbb E}(z)^{*}$, where 
    ${\mathbb E}(z)$ is as in \eqref{boldE} (for a given $z \in 
    \Omega$).  Hence the tensor 
    multiplication operator
    \begin{equation}   \label{LboldE*}
	L_{{\mathbb E}(z)^{*}} \colon x \mapsto {\mathbb E}(z)^{*} 
	\otimes x
   \end{equation}
   defines an operator from $\cX$ to $C_{b}(\Psi, \cL(\cY_{T}, 
   \cU_{T})) \otimes_{\rho} \cX$; one can verify that its adjoint 
   acting on pure tensors is given by
   $$
    L^{*}_{{\mathbb E}(z)^{*}} \colon g \otimes x \mapsto 
    \rho({\mathbb E}(z) g) x.
   $$
   As a consequence we get the identity
   \begin{equation}   \label{tensorid}
   L^{*}_{{\mathbf E}(z)^{*}} L_{{\mathbb E}(w)^{*}} x =
   L^{*}_{{\mathbb E}(z)^{*}} ({\mathbb E}(w)^{*} \otimes x) = \rho\left( 
   {\mathbb E}(z) {\mathbb E}(w)^{*} \right) x.
   \end{equation}
   In case $\cY_{T} = \cU_{T}$ (the square case),  then $C_{b}(\Psi, 
   \cL(\cY_{T}, \cU_{T}) \otimes_{\rho} \cX$ collapses down to $\cX$ 
   (a consequence of the balancing property \eqref{balance}), and 
   then $L^{*}_{{\mathbb E}(z)^{*}}$ can be identified with 
   $L^{*}_{{\mathbb E}(z)^{*}} = \rho({\mathbb E}(z))$.  We conclude 
   that the tensor-product construction is exactly the technical tool 
   needed to push the square case to the non-square case.
   This type of colligation matrix appears in \cite{Ambrozie04, 
   DMM07, DM07} for the square case and in \cite{MS08} for the 
   nonsquare case.

   \section{The Schur-Agler class associated with a collection of 
   test functions}  \label{S:main}
   
   Suppose that we are given a collection $\Psi$ of test functions 
   $\psi \colon \Omega \to \cL(\cU_{T}, \cY_{T})$ satisfying the 
   admissibility condition \eqref{test-axiom}.  For $\cE$ any 
   auxiliary HIlbert space and $K$ an $\cL(\cE)$-valued positive 
   kernel on $\Omega$, we say that $K$ is {\em $\Psi$-admissible}, 
   written as $K \in \cK_{\Psi}(\cE)$, if 
   the operator $R_{\psi} \colon f(z) \mapsto f(z) \psi(z)$ is 
   contractive from $\cH(K)_{\cY_{T}}$ to $\cH(K)_{\cU_{T}}$ for each 
   $\psi \in \Psi$, or equivalently (by Theorem \ref{T:RSbounded}), if
   the ${\mathbb C}$-valued kernel
   \begin{equation}   \label{kXpsiK}
   k_{X, \psi, K}(z,w) = \operatorname{tr}\left(X(w)^{*} (I - \psi(w)^{*} 
   \psi(z)) X(z) K(z,w)\right) 
   \end{equation}
   is a positive kernel for each choice of $X \colon \Omega \to 
   \cC_{2}(\cE, \cU_{T})$ and $\psi \in \Psi$.
   We then say that the function $S \colon \Omega \to \cL(\cU, \cY)$ 
   is in the $\Psi$-Schur-Agler class $\mathcal{SA}_{\Psi}(\cU, \cY)$
   if the operator $R_{S}$ of right multiplication by $S$ is 
   contractive from $\cH(\cY)_{\cY}$ to $\cH(\cY)_{\cU}$ for each 
   $\Psi$-admissible $\cL(\cY)$-valued positive kernel $K$, or 
   equivalently, if the kernel
   \begin{equation}   \label{kYSK}
   k_{Y,S,K}(z,w) = \operatorname{tr}(Y(w)^{*} (I - S(w)^{*} S(z)) 
   Y(z) K(z,w))
   \end{equation}
   is a positive ${\mathbb C}$-valued kernel for each choice of $Y 
   \colon \Omega \to \cC_{2}(\cY, \cU)$ and  $K  \in \cK_{\Psi}(\cY)$.
   
   Our main result on the Schur-Agler class $\mathcal{SA}_{\Psi}(\cU, 
   \cY)$ is the following.
   
   \begin{theorem}  \label{T:SchurAgler}
       Suppose that we are given a collection of test functions 
       $\Psi$ satisfying condition \eqref{test-axiom} and $S_{0}$ is a 
       function on some subset  $\Omega_{0}$  of $\Omega$ with values in $\cL(\cU, \cY)$.  
       Consider the following conditions:
       
       \begin{enumerate}
	   \item $S_{0}$ can be extended to a function $S$ 
	   defined on all of $\Omega$ such that $S \in \mathcal{SA}_{\Psi}(\cU, \cY)$, i.e., the 
	   kernel \eqref{kYSK} is a positive kernel for all choices 
	   of  $\cL(\cY, \cU)$-valued functions $Y$ on $\Omega_{0}$ and all choices of 
	   kernels $K \in \cK_{\Psi}(\cY)$.
	   
	   \item $S_{0}$ has an {\em Agler decomposition} on 
	   $\Omega_{0}$, i.e., there is a completely positive kernel 
	   $\Gamma \colon \Omega_{0} \times \Omega_{0} \to 
	   \cL(C_{b}(\Psi, \cL(\cY_{T})), \cL(\cY))$ so that 
	   \begin{equation}   \label{Aglerdecom}
	   I - S_{0}(z) S_{0}(w)^{*} = 
	   \Gamma(z,w)[ I - {\mathbb E}(z) {\mathbb E}(w)^{*} ]
	   \end{equation}
	   for all $z,w \in \Omega_{0}$ (where ${\mathbb E}(z) \in 
	   C_{b}(\Psi, \cL(\cU_{T}, \cY_{T}))$ is as in \eqref{boldE}).
	   
	   \item There is a Hilbert state space $\cX$ which carries a 
	   $*$-representation of the $C^{*}$-algebra $C_{b}(\Psi, \cL(\cY_{T}))$ and a  
	   $\Psi$-unitary colligation $\bU$  (see 
	   Section \ref{S:colligations}) 
	   \begin{equation}   \label{bUcol}
	   \bU = \begin{bmatrix} A & B \\ C & D \end{bmatrix} \colon 
	   \begin{bmatrix} \cX \\ \cU \end{bmatrix} \to 
	       \begin{bmatrix} C_{b}(\Psi, \cL(\cY_{T}, \cU_{T})) 
		   \otimes_{\rho} \cX \\ \cY \end{bmatrix} 
	 \end{equation}
	 so that $S_{0}$ has the transfer-function realization
	 \begin{equation}  \label{S0realization}
	 S_{0}(z) = D + C (I - L^{*}_{{\mathbb E}(z)^{*}} A)^{-1}  
	 L^{*}_{{\mathbb E}(z)^{*}} B
	 \end{equation}
	 for $z \in \Omega_{0}$.
	 \end{enumerate}
	 Then (1) $\Rightarrow$  (2)   $\Leftrightarrow$ (3); if 
	 $\operatorname{dim} \cY_{T} < \infty$, then also (2) 
	 $\Rightarrow$ (1) and hence (1), (2), (3) are all equivalent 
	 to each other.
	 \end{theorem}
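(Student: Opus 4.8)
### Proof proposal

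The plan is to prove the implications in the order $(3) \Rightarrow (2) \Rightarrow (3)$ (establishing the equivalence by direct back-and-forth manipulation of the realization formula and the Agler decomposition), then $(1) \Rightarrow (2)$ by a Hahn–Banach/cone-separation argument, and finally $(2) \Rightarrow (1)$ under the finite-dimensionality hypothesis on $\cY_T$ by reversing the separation argument with a compactness input.

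For $(3) \Rightarrow (2)$: given the $\Psi$-unitary colligation $\bU$ and the realization \eqref{S0realization}, I would write $\Delta(z) := L^*_{{\mathbb E}(z)^*}$ (an operator from $\cX$ into $C_b(\Psi,\cL(\cY_T,\cU_T))\otimes_\rho\cX$, with adjoint $L_{{\mathbb E}(z)^*}$) and abbreviate $F(z) = C(I - \Delta(z)A)^{-1}$, so $S_0(z) = D + F(z)\Delta(z)B$. Using unitarity of $\bU$ (i.e., $A^*A + C^*C = I$, $A^*B + C^*D = 0$, $B^*B + D^*D = I$), the standard de Branges–Rovnyak lurking-isometry computation yields
\begin{equation*}
I - S_0(z)S_0(w)^* = F(z)\bigl(I - \Delta(z)\Delta(w)^*\bigr)F(w)^*.
\end{equation*}
By the tensor identity \eqref{tensorid}, $\Delta(z)\Delta(w)^* = L^*_{{\mathbb E}(z)^*}L_{{\mathbb E}(w)^*}$; but more to the point, feeding an element $a \in C_b(\Psi,\cL(\cY_T))$ into the picture, the map $(z,w) \mapsto F(z)\bigl(L_{{\mathbb E}(w)^*}^{*}\,\rho(\,\cdot\,)\,L_{{\mathbb E}(z)^*}\bigr)^{\!*}$-type expression defines a completely positive kernel $\Gamma$ from $\cL(C_b(\Psi,\cL(\cY_T)))$-valued data into $\cL(\cY)$ via the Kolmogorov form of Theorem \ref{T:cpker}(3$'$): set $H(z) = F(z)$ composed with the appropriate intertwiner so that $\Gamma(z,w)[a] = H(z)\pi(a)H(w)^*$, and check that $\Gamma(z,w)[I - {\mathbb E}(z){\mathbb E}(w)^*]$ reproduces $I - S_0(z)S_0(w)^*$. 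The main care here is getting the left-$\cL(C_b(\Psi,\cL(\cY_T)))$ action to interact correctly with the tensor product and the representation $\rho$, which is exactly what the $\Psi$-colligation machinery in Section \ref{S:colligations} was set up to handle.

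For $(2) \Rightarrow (3)$: starting from an Agler decomposition \eqref{Aglerdecom}, apply the Kolmogorov decomposition (Theorem \ref{T:cpker}(3$'$)) to $\Gamma$ to get a Hilbert space $\cH$ carrying a $*$-representation $\pi$ of $C_b(\Psi,\cL(\cY_T))$ and a map $H\colon\Omega_0\to\cL(\cH,\cY)$ with $\Gamma(z,w)[a] = H(z)\pi(a)H(w)^*$. Substituting $a = I - {\mathbb E}(z){\mathbb E}(w)^*$ and using the factorization of ${\mathbb E}(z){\mathbb E}(w)^*$ through the tensor construction (as in \eqref{tensorid}), rearrange \eqref{Aglerdecom} into the form
\begin{equation*}
I - S_0(z)S_0(w)^* + H(z)L_{{\mathbb E}(z)^*}^{}L_{{\mathbb E}(w)^*}^{*}H(w)^* \text{-style term} = H(z)H(w)^*,
\end{equation*}
which exhibits two maps from $\cU\oplus(\text{tensored }\cH)$ into $\cY\oplus\cH$ with the same Gram kernel; a lurking-isometry argument then produces a partial isometry, extended to a unitary $\bU$ of the required colligation form (enlarging $\cX$ if necessary), and unwinding the geometry gives \eqref{S0realization}. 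I would take $\cX = \cH$ (possibly after the standard dilation to make the isometry unitary). This is the classical Ball–Trent-style argument adapted to correspondences; the only new wrinkle is bookkeeping the module structure.

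For $(1) \Rightarrow (2)$: this is the separation-of-convex-sets step and I expect it to be \textbf{the main obstacle}. Fix the data $(\Omega_0, S_0)$ and consider, on an appropriate space of $\cL(\cY)$-valued (or $\cC_1(\cY)$-valued, by duality) functions on $\Omega_0\times\Omega_0$, the convex cone $\cC$ generated by kernels of the form $(z,w)\mapsto\Gamma(z,w)[I - {\mathbb E}(z){\mathbb E}(w)^*]$ as $\Gamma$ ranges over completely positive kernels into $\cL(C_b(\Psi,\cL(\cY_T)),\cL(\cY))$ — equivalently, by Remark \ref{R:RSbounded}, over the relevant class — and show this cone is closed (in a suitable weak topology) and that condition (1) forces $(z,w)\mapsto I - S_0(z)S_0(w)^*$ to lie in its closure. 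If \eqref{Aglerdecom} failed, Hahn–Banach would separate $I - S_0S_0^*$ from $\cC$ by a functional; such a functional is represented (via the operator-valued Riesz theorem discussed in Section \ref{S:test}, with supports possibly hitting $\Psi_\beta\setminus\Psi$) by a positive kernel $K$ on $\Omega_0$ together with test-data witnessing non-membership, and one checks that $K$ can be taken $\Psi$-admissible — precisely because the separating functional annihilates all the $\Gamma[I - {\mathbb E}{\mathbb E}^*]$ kernels, which encode the admissibility constraints $k_{X,\psi,K}\succeq 0$. This contradicts (1) via the characterization \eqref{kYSK}. The delicate points are: (i) choosing the topology so the cone is closed while the dual is still representable by kernels (here one passes to $\Psi_\beta$ and uses the Banach–Alaoglu/Tychonoff compactness noted in Section \ref{S:test}); and (ii) the passage from a separating functional to an \emph{admissible} kernel rather than merely a positive one. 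Finally, for $(2)\Rightarrow(1)$ when $\dim\cY_T<\infty$: reverse the above — finite-dimensionality of $\cY_T$ makes the relevant fibers of test-function values norm-compact, so the cone $\cC$ is genuinely closed (no passage to the closure is lost), and hence membership of $I - S_0S_0^*$ in $\cC$, combined with the $(2)\Leftrightarrow(3)$ realization, lets one extend $S_0$ to all of $\Omega$ by the transfer-function formula \eqref{S0realization} and verify \eqref{kYSK} directly against every $K\in\cK_\Psi(\cY)$.
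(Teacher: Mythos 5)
Your sketches of $(2)\Leftrightarrow(3)$ are essentially the paper's own argument (lurking isometry for $(2)\Rightarrow(3)$, and direct computation with the colligation relations plus the tensor identity \eqref{tensorid} for $(3)\Rightarrow(2)$), and those are fine.  There are, however, two genuine gaps in your treatment of the remaining implications.

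For $(1)\Rightarrow(2)$: the overall Hahn--Banach/cone strategy is correct, but you have elided the two technical steps that carry the weight. First, the paper handles only finite $\Omega_0$ by separation, then passes to arbitrary $\Omega_0$ via the Kurosh theorem applied to the compact sets $\Phi_{\Omega_F}$ of admissible $\Gamma$'s over finite subsets; without some compactness-and-consistency argument of this kind one cannot pass from ``Agler decomposition on every finite subset'' to ``Agler decomposition on $\Omega_0$.''  Second, the construction of an admissible kernel $K$ from the separating functional ${\mathbb L}$ is not automatic: the paper forms the symmetrized functional ${\mathbb L}_1$, builds a pre-Hilbert space $\cH_{{\mathbb L}_1,\epsilon}$, and --- crucially --- inserts the $\epsilon^2\sum_{w}\operatorname{tr}(\Delta_{f,g}(w,w))$ regularization term precisely so that point evaluations become bounded (norm $\le 1/\epsilon$), which is what lets Theorem \ref{T:HKXconverse} apply and turn the space into a genuine $\cH(K)_\cY$; the contractivity of $R_\psi$ (hence admissibility of $K$) then follows from positivity of ${\mathbb L}_1$ on the cone, and finally one lets $\epsilon\to 0$ along an increasing sequence of finite-rank projections $P_n$.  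Your phrase ``one checks that $K$ can be taken $\Psi$-admissible because the separating functional annihilates all the $\Gamma[\,\cdot\,]$ kernels'' gestures in the right direction but does not say how to get boundedness of the point evaluations, and without the $\epsilon$-trick there is no reproducing kernel at all.

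For $(2)\Rightarrow(1)$ with $\dim\cY_T<\infty$: your proposed mechanism --- ``finite-dimensionality makes the fibers norm-compact, so the cone is genuinely closed, hence membership follows'' --- does not address what actually needs to be shown, and in fact closedness of the cone is irrelevant to this direction (it was already used in $(1)\Rightarrow(2)$, and the paper proves it without any dimension restriction).  The real difficulty is this: given the Agler decomposition via an abstract completely positive kernel $\Gamma$, one must show $R_S$ is contractive on $\cH(K)_\cY$ for \emph{every} $\Psi$-admissible $K$, and there is no direct way to test the abstract Kolmogorov representation $\Gamma(z,w)[g]=H(z)\rho(g)H(w)^*$ against an arbitrary $K$.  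The finite-dimensionality of $\cY_T$ enters in an entirely different way: it makes $C_b(\Psi,\cL(\cY_T))$ a CCR $C^*$-algebra whose irreducible representations are all point evaluations $g\mapsto g(\psi_0)$ with $\psi_0\in\Psi_\beta$, so $\rho$ decomposes as a direct integral of multiples of such, yielding the explicit integral Agler decomposition \eqref{detailed}.  Only with that concrete integral form in hand can one, for a finite set of points and an $\epsilon$-perturbed kernel $K_{0,\epsilon}$, plug the integrand into the contractivity inequality \eqref{Rpsicontractive} pointwise in $\psi$ and integrate.  Your proposal is missing this entire circle of ideas, and ``verify \eqref{kYSK} directly against every $K\in\cK_\Psi(\cY)$'' is, as stated, a restatement of the conclusion rather than an argument for it.
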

	 
	We shall prove (1) $\Rightarrow$ (2) $\Rightarrow$ (3) 
	$\Rightarrow$ (2) and, if $\operatorname{dim} \cY_{T} < 
	\infty$, then also (2) $\Rightarrow$ (1).
	 
	 \begin{proof}[Proof of (1) $\Rightarrow$ (2):] 
	     
	  \textbf{Step 1:   $\Omega_{0}$ is a finite 
	    subset of $\Omega$.} 
	    
	    We define a cone $\cC$ by
	    \begin{align*} 
	    \cC =  & \{ \Xi \colon \Omega_{0} \times \Omega_{0} \to 
	    \cL(\cY) \colon 
	     \Xi(z,w)  = \Gamma(z,w)[ I - {\mathbb E}(z) {\mathbb 
	    E}(w)^{*}] \text{ for some } \\ & \text{completely positive 
	    kernel } 
	     \Gamma \colon \Omega_{0} \times \Omega_{0} \to
	    \cL(C_{b}(\Psi, \cL(\cY_{T})), \cL(\cY))\}.  
	    \end{align*}
	    Note that the elements of $\cC$ can be viewed as matrices 
	    with rows and columns indexed by the finite set 
	    $\Omega_{0}$ and matrix entries in $\cL(\cY)$.  Thus we 
	    may view $\cC$ as a subset of the linear space $\cV$ of 
	    all such matrices with topology of pointwise weak-$*$ 
	    convergence.  We shall need a few preliminary 
	    lemmas.  It is easy to verify that $\cC$ is a cone in 
	    $\cV$. 
	    
	    \begin{lemma} \label{L:Cclosed} The cone $\cC$ is closed 
		in $\cV$.
	  \end{lemma}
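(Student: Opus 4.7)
The plan is to take a net $\{\Xi_\alpha\} \subset \cC$ with $\Xi_\alpha \to \Xi$ pointwise weak-$*$ and to produce a completely positive kernel $\Gamma$ witnessing $\Xi \in \cC$. For each $\alpha$, write $\Xi_\alpha(z,w) = \Gamma_\alpha(z,w)[I - {\mathbb E}(z){\mathbb E}(w)^*]$ with $\Gamma_\alpha$ a completely positive kernel, and invoke Theorem~\ref{T:cpker}(3$^{\prime}$) (passing to the essential part so that the representation is unital) to decompose $\Gamma_\alpha(z,w)[a] = H_\alpha(z)\pi_\alpha(a)H_\alpha(w)^*$ for a unital $*$-representation $\pi_\alpha \colon C_b(\Psi,\cL(\cY_T)) \to \cL(\cH_\alpha)$ and a function $H_\alpha \colon \Omega_0 \to \cL(\cH_\alpha,\cY)$.

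The main obstacle will be a uniform bound on $\|\Gamma_\alpha(z,w)\|$. By the admissibility axiom \eqref{test-axiom}, $c_z := \sup_{\psi \in \Psi}\|\psi(z)\|^2 < 1$ for each $z \in \Omega_0$, so ${\mathbb E}(z){\mathbb E}(z)^* \le c_z I$ in the $C^*$-algebra $C_b(\Psi,\cL(\cY_T))$; applying $\pi_\alpha$ and conjugating by $H_\alpha(z)$ gives $H_\alpha(z)H_\alpha(z)^* \le (1-c_z)^{-1}\Xi_\alpha(z,z)$. After passing to a subnet, uniform boundedness yields $\sup_\alpha\|\Xi_\alpha(z,z)\| < \infty$ for each of the finitely many points $z \in \Omega_0$, whence $\sup_\alpha\|H_\alpha(z)\| < \infty$; contractivity of $*$-representations then gives $\sup_\alpha\|\Gamma_\alpha(z,w)\| < \infty$ for each $(z,w)$.

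With these bounds in hand, for each $a \in C_b(\Psi,\cL(\cY_T))$ and each $(z,w) \in \Omega_0 \times \Omega_0$ the values $\Gamma_\alpha(z,w)[a]$ lie in a weak-$*$ compact ball of $\cL(\cY)$. A Tychonoff--Banach--Alaoglu argument on the product of these balls (indexed by $a$ and by the finite set $\Omega_0 \times \Omega_0$) extracts a further subnet on which $\Gamma_\alpha(z,w)[a]$ converges weak-$*$ to some $\Gamma(z,w)[a]$ simultaneously for all $a$ and $(z,w)$; the limit $\Gamma(z,w)$ is automatically linear and bounded in $a$. Complete positivity of $\Gamma_\alpha$, expressed as positivity of the sums in \eqref{cpker}, passes to the weak-$*$ limit because positive cones in $\cL(\cY)$ are weak-$*$ closed, so $\Gamma$ is a completely positive kernel. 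Finally, specializing the weak-$*$ convergence to $a = I - {\mathbb E}(z){\mathbb E}(w)^*$ and matching with the standing convergence $\Xi_\alpha(z,w) \to \Xi(z,w)$ pins down $\Gamma(z,w)[I - {\mathbb E}(z){\mathbb E}(w)^*] = \Xi(z,w)$, placing $\Xi$ in $\cC$.
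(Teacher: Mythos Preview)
Your proof is essentially the paper's argument: bound $\|\Gamma_\alpha(z,z)\| \le (1-c_z)^{-1}\|\Xi_\alpha(z,z)\|$ via axiom \eqref{test-axiom}, extend to off-diagonal bounds, extract a weak-$*$ convergent subnet of the $\Gamma_\alpha$, and check that the limit is a completely positive kernel representing $\Xi$. The only cosmetic differences are that you route the diagonal estimate through the Kolmogorov decomposition $H_\alpha(z)\pi_\alpha(a)H_\alpha(w)^*$ whereas the paper computes directly with $\Gamma_\alpha(z,z)[I]$, and you organize the compactness step as a Tychonoff product over test elements $a$ while the paper applies Banach--Alaoglu once after identifying $\cL(C_b(\Psi,\cL(\cY_T)),\cL(\cY))$ as the dual of a projective tensor product.

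One step deserves care. The sentence ``after passing to a subnet, uniform boundedness yields $\sup_\alpha\|\Xi_\alpha(z,z)\| < \infty$'' is not justified as written: Banach--Steinhaus does not control weak-$*$ convergent \emph{nets}, and one can build a weak-$*$ convergent net in a dual space with no norm-bounded subnet. The paper glosses over exactly the same point when it asserts that $\|\Gamma_\alpha(z,w)\|$ is uniformly bounded in $\alpha$. The clean repair is the Krein--\v{S}mulian theorem: since $\cV$ is a dual Banach space (a finite product of copies of $\cL(\cY)=\cC_1(\cY)^*$) and $\cC$ is convex, weak-$*$ closedness of $\cC$ follows once $\cC\cap B$ is weak-$*$ closed for every norm-ball $B$; for that it suffices to treat norm-bounded nets $\{\Xi_\alpha\}$ from the outset, after which your argument (and the paper's) goes through unchanged.
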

	  
	  \begin{proof}[Proof of Lemma]  Suppose that $\{ 
	      \Xi_{\alpha}\}$ is a net of elements of $\cC$ such that 
	      $\{\Xi_{\alpha}(z,w)\}$ converges weak-$*$ to 
	      $\Xi(z,w)$ for each $z,w \in \Omega_{0}$.    Thus, for 
	      each index $\alpha$ there is a choice of completely 
	      positive kernel $\Gamma_{\alpha}$ so that
	      \begin{equation}   \label{alpha-rep}
		  \Xi_{\alpha}(z,w) = \Gamma_{\alpha}(z,w)[I - 
		  {\mathbb E}(z) {\mathbb E}(w)^{*} ].
	  \end{equation}
	  The computation
	  \begin{align*}
	      \Gamma_{\alpha}(z,z)[I] & = 
	      \Gamma_{\alpha}(z,z)[ (I - {\mathbb E}(z) {\mathbb E}(z)^{*})^{1/2} 
	      (I - {\mathbb E}(z) {\mathbb E}(w)^{*})^{-1}
	      (I - {\mathbb E}(z) {\mathbb E}(z)^{*})^{1/2} ] \\
	      & \le \Gamma_{\alpha}(z,z)\left[(I - {\mathbb E}(z) {\mathbb 
	      E}(z)^{*})^{1/2}\left( \frac{1}{1 - \|{\mathbb E}(z) 
	      \|^{2}}\right) (I - {\mathbb E}(z) {\mathbb 
	      E}(z)^{*})^{1/2} \right] \\\
	      & = \left( \frac{1}{1 - \|{\mathbb E}(z) \|^{2}} 
	      \right) \Gamma_{\alpha}(z,z) [ I - {\mathbb E}(z) 
	      {\mathbb E}(z)^{*} ]  \\
	      & = \left( \frac{1}{1 - \|{\mathbb E}(z) \|^{2}} 
	      \right)  \Xi_{\alpha}(z,z)
	   \end{align*}
	   shows that
	   \begin{equation}  \label{est1}
	       \| \Gamma_{\alpha}(z,z) \| \le M_{z} \| 
	       \Xi_{\alpha}(z,z) \| \text{ where } M_{z} = \frac{1}{1 
	       - \| {\mathbb E}(z) \|^{2}},
	     \end{equation}
	     where we used here the underlying assumption 
	     \eqref{test-axiom} for our set of test functions 
	     $\Psi$.  Since the block $2 \times 2$ matrix
	     $$
	     \begin{bmatrix} \Gamma_{\alpha}(z,z)[I] & 
		 \Gamma_{\alpha}(z,w)[I] \\
	 \Gamma_{\alpha}(w,z)[I]   & \Gamma_{\alpha}(w,w)[I]  
     \end{bmatrix}
     $$
     is positive semidefinite for each index $\alpha$ and each pair 
     of points $z,w \in \Omega_{0}$,  it follows that
     \begin{equation}   \label{est2}
	 \| \Gamma_{\alpha}(z,w) \| \le M_{z} M_{w} \| 
	 \Xi_{\alpha}(z,w) \|^{1/2} \| \Xi_{\alpha}(w,w) \|^{1/2}.
  \end{equation}
  Since $\Omega_{0}$ is finite, we see that $\| 
  \Gamma_{\alpha}(z,w)\|$ is in fact bounded uniformly with respect 
  to the indices $\alpha$ and the points $z,w$ in $\Omega_{0}$.  
  Since $\cL(C_{b}(\Psi, \cL(\cY_{T})), \cL(\cY))$ is the 
  Banach-space dual of the projective tensor-product  Banach  space 
  $\cC_{1}(\cY) \otimes C_{b}(\Psi, \cL(\cY_{T}))$ (see e.g.~ 
  \cite[Theorem IV.2.3]{Takesaki}),  it follows from the 
  Banach-Alaoglu theorem that there is a subnet $\{\Gamma_{\beta}\}$ 
  of $\{ \Gamma_{\alpha}\}$ such that $\{\Gamma_{\beta}(z,w)\}$ converges 
  weak-$*$ to some $\Gamma_{\infty}(z,w) \in \cL(C_{b}(\Psi, 
  \cL(\cY_{T})), \cL(\cY))$.  It is straightforward to verify that 
  the defining property \eqref{cpker} for a completely positive 
  kernel is preserved under such weak-$*$ limits; hence 
  $\Gamma_{\infty}$ is again a completely positive kernel.  Moreover, 
  from the fact that $\{\Xi_{\alpha}(z,w)\}$ converges weak-$*$ to 
  $\Xi(z,w)$, we get that the subnet  $\{\Xi_{\beta}(z,w)\}$ also 
  converges weak-$*$ to $\Xi(z,w)$.  Taking limits in the formula 
  \eqref{alpha-rep} leads us to the representation 
  $$
    \Xi(z,w) = \Gamma_{\infty}(z,w)[ I - {\mathbb E}(z) {\mathbb 
    E}(w)^{*} ]
  $$
  for the limit kernel $\Xi(z,w)$.  We conclude that the limit kernel 
  $\Xi$ is again in $\cC$ as wanted.
 \end{proof}

	  \begin{lemma}  \label{L:posdefin}
	      Suppose that $\Xi(z,w) = H(z) H(w)^{*}$ is a positive 
	      $\cL(\cY)$-valued
	      kernel on $\Omega_{0}$.  Then $\Xi$ is in $\cC$.
	      \end{lemma}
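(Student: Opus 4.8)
The plan is to show that any positive kernel $\Xi(z,w) = H(z)H(w)^{*}$ on the finite set $\Omega_{0}$ arises from the defining construction of $\cC$ by exhibiting an explicit completely positive kernel $\Gamma$ with $\Xi(z,w) = \Gamma(z,w)[I - {\mathbb E}(z){\mathbb E}(w)^{*}]$. The natural guess is to define, for $f \in C_{b}(\Psi, \cL(\cY_{T}))$,
\begin{equation*}
\Gamma(z,w)[f] = H(z)\,\ell(z,w)[f]\,H(w)^{*}
\end{equation*}
for a suitable scalar- or operator-valued completely positive kernel $\ell$ on $\Omega_{0}$ taking values in $\cL(C_{b}(\Psi,\cL(\cY_{T})),\cL(\cY))$ (or acting on $\cL(\cY)$ through scalars), chosen so that $\ell(z,w)[I - {\mathbb E}(z){\mathbb E}(w)^{*}] = I_{\cY}$. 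First I would use the admissibility hypothesis \eqref{test-axiom}: since $\sup_{\psi}\|\psi(z)\| < 1$ for each $z \in \Omega$, the function $\psi \mapsto (I_{\cY_{T}} - \psi(z)\psi(w)^{*})$ is, for each fixed pair $(z,w)$, an invertible element of $C_{b}(\Psi, \cL(\cY_{T}))$ when $z = w$, and more importantly the element ${\mathbb E}(z){\mathbb E}(w)^{*} \in C_{b}(\Psi, \cL(\cY_{T}))$ has norm strictly less than $1$. Hence $I - {\mathbb E}(z){\mathbb E}(w)^{*}$, evaluated appropriately, can be inverted within the $C^{*}$-algebra.

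The key step is to produce $\Gamma$ directly. Because $\Xi = HH^{*}$, I would set
\begin{equation*}
\Gamma(z,w)[f] := H(z)\,\bigl(\mathbf{1}\otimes f\bigr)\,H(w)^{*}
\end{equation*}
interpreted as follows: enlarge each $H(w) \colon \cX_{0} \to \cY$ (where $\cX_{0}$ is the Kolmogorov space of $\Xi$) to $\widetilde{H}(w) \colon \cX_{0} \otimes \cH(k_{{\mathbb E}}) \to \cY$ where $k_{{\mathbb E}}(z,w) = (I - {\mathbb E}(z){\mathbb E}(w)^{*})$ is itself a positive $\cL(\cY_{T})$-valued kernel (positive because each ${\mathbb E}(z){\mathbb E}(w)^{*}$ has norm $<1$, so $k_{{\mathbb E}}$ is a Schur-complement-type positive kernel), and let the left $C_{b}(\Psi,\cL(\cY_{T}))$-action be the canonical multiplication action on $\cH(k_{{\mathbb E}})$. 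Then $\Gamma(z,w)[I - {\mathbb E}(z){\mathbb E}(w)^{*}]$ collapses, by the reproducing property of $\cH(k_{{\mathbb E}})$, back to $H(z)H(w)^{*} = \Xi(z,w)$, and complete positivity of $\Gamma$ is immediate from its Kolmogorov form as in Theorem~\ref{T:cpker}(3$'$). The bookkeeping amounts to checking that the left action is a genuine $*$-representation and that the tensor factor contracts correctly under the reproducing kernel pairing.

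The main obstacle I anticipate is handling the non-square case $\cU_{T} \neq \cY_{T}$ cleanly: ${\mathbb E}(z) \in C_{b}(\Psi,\cL(\cU_{T},\cY_{T}))$ is not an element of the $C^{*}$-algebra $C_{b}(\Psi,\cL(\cY_{T}))$, so the expression $I - {\mathbb E}(z){\mathbb E}(w)^{*}$ must be read as living in $C_{b}(\Psi,\cL(\cY_{T}))$ while ${\mathbb E}(z){\mathbb E}(w)^{*}$ individually need not be — it is the product that matters — and one must verify that $k_{{\mathbb E}}$ is a bona fide positive $\cL(\cY_{T})$-valued kernel and that the multiplication action of the algebra on its reproducing kernel module is well-defined. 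This is precisely the kind of place where the internal tensor-product machinery of Section~\ref{S:prelim} (the discussion around \eqref{LboldE*}--\eqref{tensorid}) does the work, and I would invoke it rather than recompute. Once the correct Kolmogorov decomposition of $\Gamma$ is written down, positivity and the identity \eqref{Aglerdecom} are formal, so the lemma follows.
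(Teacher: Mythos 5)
Your proposal has a fatal flaw at the step where you assert that
$k_{\mathbb E}(z,w) := I - {\mathbb E}(z){\mathbb E}(w)^{*}$ ``is itself a positive
$\cL(\cY_{T})$-valued kernel (positive because each ${\mathbb E}(z){\mathbb E}(w)^{*}$ has norm $<1$).''
That claim is false, and the entire construction of $\cH(k_{\mathbb E})$ and
$\widetilde H(w)\colon \cX_{0}\otimes\cH(k_{\mathbb E})\to\cY$ hinges on it. Strict norm
bounds pointwise on the diagonal do not make a difference kernel positive. The
prototypical counterexample is already present in the classical setup
\eqref{Psi-clasSchur}: with $\Omega={\mathbb D}$, $\Psi=\{\psi_{0}(z)=z\}$, one has
$k_{\mathbb E}(z,w)=1-z\bar w$, and taking $z_{1}=1/2$, $z_{2}=-1/2$ and coefficients
$e_{1}=1$, $e_{2}=-1$ in \eqref{posker} gives
$\sum_{i,j}\bar e_{i}(1-z_{i}\bar z_{j})e_{j}=|e_{1}+e_{2}|^{2}-\tfrac14|e_{1}-e_{2}|^{2}=-1<0$.
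What \emph{is} positive in that example is the Szeg\H{o} kernel
$(1-z\bar w)^{-1}$, not $1-z\bar w$ itself. So there is no reproducing-kernel
module $\cH(k_{\mathbb E})$ to tensor onto, no multiplication representation of
$C_{b}(\Psi,\cL(\cY_{T}))$ on such a module, and the claimed ``collapse'' of
$\Gamma(z,w)[I-{\mathbb E}(z){\mathbb E}(w)^{*}]$ back to $H(z)H(w)^{*}$ has nothing to
stand on. Your anticipation that the ``main obstacle'' is the non-square bookkeeping
misses this more basic problem, which is present even in the square scalar case.

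The paper's proof points the inversion the right way. It fixes a single test
function $\psi_{0}\in\Psi$ and a unit vector $y_{0}\in\cY_{T}$, and then uses the
geometric series
$(1-y_{0}^{*}\psi_{0}(z)\psi_{0}(w)^{*}y_{0})^{-1}
=\sum_{n\ge 0}(y_{0}^{*}\psi_{0}(z)\psi_{0}(w)^{*}y_{0})^{n}$, each summand of which
is a positive scalar kernel by the Schur product theorem; this
\emph{inverse} is the positive object, with a Kolmogorov decomposition
$\sum_{n}g_{n}(z)g_{n}(w)^{*}$. Writing $H(z)H(w)^{*}$ as
$H(z)\bigl((1-y_{0}^{*}\psi_{0}(z)\psi_{0}(w)^{*}y_{0})^{-1}
(1-y_{0}^{*}\psi_{0}(z)\psi_{0}(w)^{*}y_{0})I_{\cX}\bigr)H(w)^{*}$ and folding the
$y_{0}$'s and $g_{n}$'s into a single operator function $G(z)$ yields
$\Xi(z,w)=G(z)\bigl(I_{\widetilde\cX}\otimes(I-\psi_{0}(z)\psi_{0}(w)^{*})\bigr)G(w)^{*}$,
and then $\Gamma_{0}(z,w)[g]=G(z)(I_{\widetilde\cX}\otimes g(\psi_{0}))G(w)^{*}$
is completely positive since its Kolmogorov decomposition as in Theorem~\ref{T:cpker}
(3$'$) is exhibited (with representation given by evaluation at $\psi_{0}$). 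This
only ever needs a single test function and a scalar Szeg\H{o}-type positivity;
nowhere does it require $I-{\mathbb E}(z){\mathbb E}(w)^{*}$ to be a positive kernel.
To repair your proposal you would essentially have to re-derive this: replace
$\cH(k_{\mathbb E})$ by a module built from a positive kernel obtained by
\emph{inverting} a scalar compression of $I-\psi_{0}(z)\psi_{0}(w)^{*}$, which is
exactly what the geometric series does.
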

	      
	      \begin{proof}[Proof of Lemma]
		  Let us say that $\Xi(z,w) = H(z) H(w)^{*}$ where $H 
		  \colon \Omega \to \cL(\cX, \cY)$ for some 
		  coefficient Hilbert space $\cX$.
		  Let $\psi_{0}$ be any fixed test function in 
		  $\Psi$.  It suffices to find another coefficient 
		  Hilbert space $\widetilde \cX$ and a function $G \colon \Omega_{0} 
		  \to \cL(\widetilde \cX \otimes \cY_{T}, \cY)$ so that
	$$
	\Xi(z,w) = G(z) \left( I_{\widetilde \cX} \otimes (I - \psi_{0}(z) \psi_{0}(w)^{*}) 
	 \right) G(w)^{*},
	$$
	for then we have the needed representation $\Xi(z,w) = 
	\Gamma_{0}(z,w)[ I - {\mathbb E}(z) {\mathbb E}(w)^{*}]$ with 
	$\Gamma_{0}$ given by 
	$$
	\Gamma_{0}(z,w)[g] = G(z) ( I_{\widetilde \cX} \otimes g(\psi_{0})) G(w)^{*}.
	$$
	Toward this end, choose a unit vector $y_{0}$ in $\cY_{T}$ 
	and note that
	$$
	y_{0}^{*} (I - \psi_{0}(z) \psi_{0}(w)^{*}) y_{0} = 1 - 
	y_{0}^{*} \psi_{0}(z) \psi_{0}(w)^{*} y_{0}
	$$
	is invertible (as an element of ${\mathbb C}$) by our 
	underlying assumption \eqref{test-axiom}.  Moreover we have 
	the geometric series representation for the inverse:
	\begin{equation}   \label{frac-rep1}
	    \frac{1}{1 - y_{0}^{*} \psi_{0}(z) \psi_{0}(w)^{*} y_{0}} 
	    =
	    \sum_{n=0}^{\infty} \left( y_{0}^{*}\psi_{0}(z) 
	    \psi_{0}(w)^{*} y_{0} \right)^{n}
	 \end{equation}
	 where each term $\left( y_{0}^{*} \psi_{0}(z) 
	 \psi_{0}(w)^{*} y_{0} \right)^{n}$ is a positive kernel due 
	 to the Schur multiplier theorem (see e.g.~\cite[Theorem 
	 3.7]{Paulsen}).  Thus there exist functions  $g_{n} \colon 
	 \Omega_{0} \to \cL(\widetilde \cG_{n}, {\mathbb C})$ so that
	 $$
	 \left( y_{0}^{*} \psi_{0}(z) \psi_{0}(w)^{*} y_{0} 
	 \right)^{n} = g_{n}(z) g_{n}(w)^{*}.
	 $$
	 Then we may rewrite \eqref{frac-rep1} as
	 \begin{equation}   \label{frac-rep2}
	     \frac{1}{1 - y_{0}^{*} \psi_{0}(z) \psi_{0}(w)^{*} 
	     y_{0}} = \sum_{n=0}^{\infty} g_{n}(z) g_{n}(w)^{*}.
	 \end{equation}
	 We conclude that
	 \begin{align*}
	     \Xi(z,w) & = H(z) H(w)^{*} \\
	     & = H(z) \left( \frac{1}{1 - y_{0}^{*} \psi_{0}(z) 
	     \psi_{0}(w)^{*} y_{0}} \cdot (1 - y_{0}^{*} \psi_{0}(z) 
	     \psi_{0}(w)^{*} y_{0}) I_{\cX} \right) H(w)^{*}  \\
 & = \sum_{n=0}^{\infty} H(z) \left( g_{n}(z) g_{n}(w)^{*} 
 (1 - y_{0}^{*} \psi_{0}(z)  \psi_{0}(w)^{*} y_{0}) I_{\cX} \right) 
 H(w)^{*}  \\  
 & = \sum_{n=0}^{\infty} H(z) g_{n}(z) \left( (1 - y_{0}^{*} 
 \psi_{0}(z) \psi_{0}(w)^{*} y_{0}) I_{\widetilde \cG_{n}} \right) 
 g_{n}(w)^{*} H(w)^{*}  \\
 & = \sum_{n=0}^{\infty} H(z) (g_{n}(z) \otimes y_{0}^{*}) \left( 
 I_{\widetilde \cG_{n}} \otimes (I - \psi_{0}(z) \psi_{0}(w)^{*}) 
 \right)  (g_{n}(w)^{*} \otimes y_{0}) H(w)^{*}  \\
 & = G(z) \left( I_{\widetilde \cX} \otimes (I - \psi_{0}(z) 
 \psi_{0}(w)^{*} ) \right) G(w)^{*}
  \end{align*}
  where we set 
  $$
  G(z) = \begin{bmatrix} H(z) (g_{1}(z) \otimes y_{0}^{*}) &
   H(z) (g_{2}(z) \otimes y_{0}^{*})  & \cdots \end{bmatrix}, \quad
   \widetilde \cX = \bigoplus_{n=1}^{\infty} \widetilde \cG_{n}.
 $$

	\end{proof}
	      
	      Let us now note that the assertion of the condition (2) 
	      in the statement of the Theorem is that the kernel 
	      $\Xi_{S_{0}}(z,w) : = I - S_{0}(z) S_{0}(w)^{*} $ is in 
	      $\cC$.  As $\cV$ is a locally convex  linear topological vector 
	      space and $\cC$ is closed in $\cV$, by a standard 
	      Hahn-Banach separation principle (see \cite[Theorem 
	      3.49b)]{Rudin}), to show that $\Xi_{S} \in \cC$ it 
	      suffices to show:  {\em $\R {\mathbb L}(\Xi_{S}) \ge 0$ 
	      whenever ${\mathbb L}$ is a continuous linear 
	      functional on $\cV$ such that $\R {\mathbb L}(\Xi) \ge 
	      0$ for each $\Xi \in \cC$}.  
	      
	      With this strategy in mind let us suppose that 
	      ${\mathbb L}$ is a continuous linear functional on 
	      $\cV$ such that $\R {\mathbb L}(\Xi) \ge 0$ for each 
	      $\Xi \in \cC$.  We then define ${\mathbb L}_{1}$ on 
	      $\cV$ by
	      $$
	       {\mathbb L}_{1}(\Xi) = \frac{1}{2} \left( {\mathbb 
	       L}(\Xi) + \overline{ {\mathbb L}(\Xi^{\vee})} \right)
	      $$
	      where we set
	      $$
	      \Xi^{\vee}(z,w) = \Xi(w,z)^{*}.
	      $$
	      Easy properties are that
	      \begin{equation}  \label{easy1}
	  {\mathbb L}_{1}(\Xi) = \R {\mathbb L}(\Xi) \text{ if } 
	  \Xi^{\vee} = \Xi.
	  \end{equation}
	 
 For $\epsilon > 0$ be an arbitrarily small but positive 
	  number, we use the functional ${\mathbb L}_{1}$ to define 
	  an inner product on the space $\cH_{{\mathbb L}_{1}, 
	  \epsilon}$ of functions $f \colon \Omega_{0} \to \cY$ by
	  $$
	  \langle f, g \rangle_{\cH_{{\mathbb L}_{1, \epsilon}}}
	  = {\mathbb L}_{1}(\Delta_{f,g}) + \epsilon^{2} 
	  \sum_{w \in \Omega_{0}} 
	  \operatorname{tr}\left(\Delta_{f,g}(w,w) \right)
	  $$
	  where we have set
	  \begin{equation}   \label{Deltafg}
	  \Delta_{f,g}(z,w) = f(z) g(w)^{*}.
	  \end{equation}
	  By Lemma \ref{L:posdefin} we know that $\Delta_{f,f} \in 
	  \cC$ and hence $\R {\mathbb L}(\Delta_{f,f}) \ge 0$.  
	  Since $\Delta_{f,f} = \Delta^{\vee}_{f,f}$, as a 
	  consequence of \eqref{easy1} we know that $\R {\mathbb 
	  L}(\Delta_{f,f}) = {\mathbb L}_{1}(\Delta_{f,f})$.  From 
	  these observations it follows that $\langle \cdot, \cdot 
	  \rangle_{\cH_{{\mathbb L}_{1, \epsilon}}}$ is a positive 
	  semidefinite inner product.  Hence we can take the 
	  Hausdorff completion of $\cH_{{\mathbb L}_{1}, \epsilon}$ 
	  to arrive at a Hilbert space, still denoted as 
	  $\cH_{{\mathbb L}_{1}, \epsilon}$. 
	  
	  For $\cX$ a coefficient Hilbert space, we shall be 
	  interested in the space $\cH_{{\mathbb L}_{1}, \epsilon} 
	  \otimes \cC_{2}(\cX, {\mathbb C})$.  The following lemma is 
	  crucial.
	  
	  \begin{lemma}  \label{L:HLtensor}
	      The space $\cH_{{\mathbb L}_{1}, \epsilon} \otimes 
	      \cC_{2}(\cX, {\mathbb C})$ can be identified with the 
	      space $(\cH_{{\mathbb L}_{1}, \epsilon})_{\cX}$ 
	      consisting of $\cC_{2}(\cX, \cY)$-valued functions $f$ on 
	      $\Omega$ with inner product given by
	      \begin{equation}   \label{Hepsilon}
	      \langle f, g \rangle_{\cH_{{\mathbb L}_{1}, 
	      \epsilon})_{\cX}} = {\mathbb L}_{1}(\Delta_{f,g}) +
	      \epsilon^{2} \sum_{w \in \Omega_{0}} 
	      \operatorname{tr}\left( \Delta_{f,g}(w,w) \right)
	      \end{equation}
	      where $\Delta_{f,g}$ has the same form as in 
	      \eqref{Deltafg} (but where now the middle space is 
	      $\cX$ rather than ${\mathbb C}$):
	      $$
	      \Delta_{f,g}(z,w) = f(z) g(w)^{*}.
	      $$
	      \end{lemma}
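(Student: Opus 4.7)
The plan is to construct an explicit unitary map $J$ from the Hilbert-space tensor product $\cH_{{\mathbb L}_1,\epsilon}\otimes\cC_2(\cX,{\mathbb C})$ onto the intrinsically-defined space $(\cH_{{\mathbb L}_1,\epsilon})_{\cX}$, following exactly the general principle flagged in Remark \ref{R:shorthand}. The map is induced pointwise by the canonical isomorphism $\cY\otimes\cC_2(\cX,{\mathbb C})\cong\cC_2(\cX,\cY)$, which sends a pure tensor $y\otimes T$ to the Hilbert--Schmidt operator $x\mapsto T(x)\,y$ from $\cX$ into $\cY$. On elementary tensors the map is then $J(f\otimes T)(z):=f(z)\otimes T$ for $z\in\Omega_0$, extended by bilinearity.

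The heart of the argument is to check that $J$ preserves inner products on pure tensors. A short computation using the rank-one form of elements of $\cC_2(\cX,{\mathbb C})$ gives, for $F=J(f\otimes T)$ and $G=J(g\otimes S)$, the identity
\[
F(z)G(w)^{*}=\operatorname{tr}(TS^{*})\,f(z)g(w)^{*}=\langle T,S\rangle_{\cC_2(\cX,{\mathbb C})}\,\Delta_{f,g}(z,w).
\]
Substituting into the defining formula \eqref{Hepsilon} and using the bilinearity of ${\mathbb L}_1$ and of the trace produces
\[
\langle F,G\rangle_{(\cH_{{\mathbb L}_1,\epsilon})_{\cX}}=\langle T,S\rangle_{\cC_2(\cX,{\mathbb C})}\,\langle f,g\rangle_{\cH_{{\mathbb L}_1,\epsilon}}=\langle f\otimes T,g\otimes S\rangle.
\]
Extending linearly to finite sums of pure tensors and then by continuity to the full tensor-product Hilbert space yields a well-defined isometry; the isometry property automatically kills null vectors, so $J$ descends cleanly to the Hausdorff completions.

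It remains to argue that $J$ has dense range. Since $\Omega_0$ is finite and $\cC_2(\cX,\cY)\cong\cY\otimes\cC_2(\cX,{\mathbb C})$, any function $F\colon\Omega_0\to\cC_2(\cX,\cY)$ admits a decomposition $F(z)=\sum_{k}f_{k}(z)\otimes T_{k}$ where $\{T_k\}$ is a fixed orthonormal basis of $\cC_2(\cX,{\mathbb C})$ (for instance the rank-one functionals associated with an orthonormal basis of $\cX$) and $f_k\colon\Omega_0\to\cY$ is the corresponding coefficient function. Finite truncations lie in the image of $J$ and converge to $F$ in the $(\cH_{{\mathbb L}_1,\epsilon})_{\cX}$-norm, by the isometry property combined with the observation that the norm is controlled pointwise on $\Omega_0$ by the $\cC_2(\cX,\cY)$-norm. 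I expect the principal bookkeeping obstacle to be checking that the whole semidefinite form in \eqref{Hepsilon} really is nonnegative before invoking Hausdorff completion on the target side; this follows from Lemma \ref{L:posdefin} applied to $\Delta_{f,f}$ (noting $\Delta_{f,f}^{\vee}=\Delta_{f,f}$ and using \eqref{easy1}), so that ${\mathbb L}_1(\Delta_{f,f})=\R{\mathbb L}(\Delta_{f,f})\ge 0$ and the $\epsilon^2$-term then secures positive definiteness.
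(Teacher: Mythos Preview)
Your proposal is correct and follows essentially the same route as the paper's own proof: both arguments reduce to the computation that for pure tensors $f\otimes T$ and $g\otimes S$ (with $T,S\in\cC_2(\cX,{\mathbb C})$, hence of the form $x^*,x'^*$ in the paper's notation) one has $\Delta_{J(f\otimes T),J(g\otimes S)}=\langle T,S\rangle_{\cC_2}\,\Delta_{f,g}$, so that the two inner products agree. The paper's version is terser (it omits the surjectivity and Hausdorff-completion bookkeeping you spell out), but the underlying identification and the key algebraic step are the same.
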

	      
	      \begin{proof}[Proof of lemma]
		  For convenience of notation we drop the 
		  $\epsilon$-term in the inner product as the 
		  $\epsilon > 0$ case proceeds in the same way but 
		  with more cumbersome notation.  For $f \otimes 
		  x^{*}$ a pure tensor in ${\mathcal H}_{{\mathbb 
		  L}_{1}} \otimes \cC_{2}(\cX. {\mathbb C})$ (so $f 
		  \in \cH_{{\mathbb L}_{1}}$ and $x \in \cX \cong 
		  \cL({\mathbb C}, \cX)$) and similarly for $f' 
		  \otimes x^{\prime *}$, we have
		  \begin{align*}
 \langle f \otimes x^{*},\, f' \otimes x^{\prime *} 
 \rangle_{\cH_{{\mathbb L}_{1}} \otimes C_{2}(\cX, {\mathbb C})} & =
 \left\langle \langle f, f' \rangle_{\cH_{{\mathbb L}_{1}}} x^{*},\, 
 x^{\prime *} \right\rangle_{\cC_{2}(\cX, {\mathbb C})}   \\
 & = {\mathbb L}_{1}(\Delta_{f,f'}) x^{*} x^{\prime *} =
 {\mathbb L}_{1}(\Delta_{f,f'} x^{*} x') 
 \end{align*}
 where the last step follows since 
 $x^{*} x'$ is just a complex number. Next observe that
 \begin{align*}
     \Delta_{f,f'}(z,w) x^{*} x & = f(z) f'(w)^{*} (x^{*} x') = f(z) 
     (x^{*} x') f'(w)^{*} \\
     & = \left(f(z) x^{*}\right) \left(f'(w) x^{\prime *} \right)^{*}
     = \Delta_{f \cdot x^{*}, f' \cdot x^{\prime *}}(z,w).
 \end{align*}
 By extending this calculation to linear combinations of pure 
 tensors, the result follows.
 \end{proof}
 
 With the formulation of the space $\left(\cH_{{\mathbb L}_{1}, 
 \epsilon}\right)_{\cX}$ in hand, it makes sense to ask whether the 
 right multiplication operator $R_{\psi} \colon f(z) \mapsto f(z) 
 \psi(z)$ defines a contraction operator from $\left(\cH_{{\mathbb 
 L}_{1},\epsilon}\right)_{\cY_{T}}$ to $\left( \cH_{{\mathbb L}_{1}, 
 \epsilon}\right)_{\cU_{T}}$.  The answer is given by the next lemma.
 
 \begin{lemma}   \label{L:Rpsi}
     For each test function $\psi \in \Psi$, the right multiplication 
     operator $R_{\psi}$ defines a contraction operator form  $\left(\cH_{{\mathbb 
 L}_{1},\epsilon}\right)_{\cY_{T}}$ to $\left( \cH_{{\mathbb L}_{1}, 
 \epsilon}\right)_{\cU_{T}}$.
 \end{lemma}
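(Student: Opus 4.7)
The plan is to expand the difference of norms using the inner product formula \eqref{Hepsilon} and show each resulting piece is nonnegative. Writing $\Xi(z,w) := f(z)(I - \psi(z)\psi(w)^*)f(w)^*$, a direct computation gives $\Delta_{f,f}(z,w) - \Delta_{R_{\psi}f, R_{\psi}f}(z,w) = \Xi(z,w)$, so
$$
\|f\|^{2}_{(\cH_{{\mathbb L}_{1},\epsilon})_{\cY_{T}}} - \|R_{\psi}f\|^{2}_{(\cH_{{\mathbb L}_{1},\epsilon})_{\cU_{T}}} = {\mathbb L}_{1}(\Xi) + \epsilon^{2} \sum_{w \in \Omega_{0}} \operatorname{tr}\bigl(\Xi(w,w)\bigr).
$$
Thus the lemma reduces to showing that each of these two summands is nonnegative.

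The main substantive step is to establish that $\Xi \in \cC$, for then nonnegativity of ${\mathbb L}_{1}(\Xi)$ will follow from the hypothesis on $\mathbb L$. To this end I would define the candidate completely positive kernel $\Gamma \colon \Omega_{0} \times \Omega_{0} \to \cL(C_{b}(\Psi, \cL(\cY_{T})), \cL(\cY))$ by
$$
\Gamma(z,w)[g] := f(z)\, g(\psi)\, f(w)^{*}, \qquad g \in C_{b}(\Psi, \cL(\cY_{T})).
$$
Since $({\mathbb E}(z){\mathbb E}(w)^{*})(\psi) = \psi(z)\psi(w)^{*}$, one has $\Gamma(z,w)[I - {\mathbb E}(z){\mathbb E}(w)^{*}] = \Xi(z,w)$ as required. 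Complete positivity of $\Gamma$ reduces, using $(a_{i}^{*}a_{j})(\psi) = a_{i}(\psi)^{*}a_{j}(\psi)$ in the $C^{*}$-algebra $C_{b}(\Psi, \cL(\cY_{T}))$, to
$$
\sum_{i,j=1}^{N} b_{i}^{*}\, f(z_{i})\, a_{i}(\psi)^{*} a_{j}(\psi)\, f(z_{j})^{*} b_{j} = V^{*} V \ge 0, \qquad V := \sum_{j=1}^{N} a_{j}(\psi)\, f(z_{j})^{*} b_{j}.
$$
Hence $\Xi \in \cC$, and since $\Xi^{\vee} = \Xi$ by inspection, property \eqref{easy1} gives ${\mathbb L}_{1}(\Xi) = \R\, {\mathbb L}(\Xi) \ge 0$.

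For the correction term, the bound \eqref{test-axiom} gives $\|\psi(w)\| < 1$, so $I - \psi(w)\psi(w)^{*} \ge 0$ and $\Xi(w,w) = f(w)(I - \psi(w)\psi(w)^{*})f(w)^{*}$ is positive semidefinite with nonnegative trace. Combining the two nonnegativity facts yields the contractivity claim. I do not anticipate a genuine obstacle: the cone $\cC$, the symmetrized functional ${\mathbb L}_{1}$, and the admissibility axiom \eqref{test-axiom} were all put in place precisely to make this step go through, and the only mild bookkeeping is confirming that $\Gamma$ as defined lands in $\cL(C_{b}(\Psi,\cL(\cY_{T})), \cL(\cY))$ and respects the $C^{*}$-algebraic product structure used in verifying \eqref{cpker}.
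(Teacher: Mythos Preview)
Your proof is correct and follows essentially the same route as the paper: both expand the norm difference as ${\mathbb L}_{1}(\Xi) + \epsilon^{2}\sum_{w}\operatorname{tr}(\Xi(w,w))$, verify $\Xi \in \cC$ via the completely positive kernel $\Gamma(z,w)[g] = f(z)g(\psi)f(w)^{*}$, and use $\Xi = \Xi^{\vee}$ together with the test-function axiom to handle the two terms. The only cosmetic difference is that you check complete positivity of $\Gamma$ directly from the defining inequality \eqref{cpker}, whereas the paper simply observes that the formula for $\Gamma$ already exhibits its Kolmogorov decomposition (condition (3$^{\prime}$) in Theorem \ref{T:cpker}).
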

 
 \begin{proof}[Proof of Lemma]  $R_{\psi}$ is contractive if and only 
     if
     $$
     \|f\|^{2}_{(\cH_{{\mathbb L}_{1}, \epsilon})_{\cY_{T}}} -
      \|R_{\psi} f\|^{2}_{(\cH_{{\mathbb L}_{1}, 
      \epsilon})_{\cU_{T}}} \ge 0
     $$
 for all $f \in \left( \cH_{{\mathbb L}_{1}, \epsilon}\right)_{\cY_{T}}$.
 This translates to the condition that
 $$
   {\mathbb L}_{1}(\Delta_{f,f} - \Delta_{f \psi, f \psi}) + 
   \epsilon^{2}  \sum_{w  \in \Omega_{0}}\left[ \Delta_{f,f}(w,w) -
   \Delta_{f \psi, f \psi}(w,w) \right] \ge 0
 $$
 for all such $f$.  Observe that
 $$
 \Delta_{f,f}(z,w) - \Delta_{f \psi, f \psi}(z,w) = 
 f(z) (I - \psi(z) \psi(w)^{*}) f(w)^{*}
 $$
 from which we see that the kernel $\Xi: = \Delta_{f,f} - \Delta_{f \psi, f \psi}$ is 
 in the cone $\cC$:  note that the kernel $\Gamma(z,w)[g] = f(z) g(\psi) f(w)^{*}$
 is completely positive since its Kolmogorov decomposition (condition 
 (3$^{\prime}$) in Theorem \ref{T:cpker}) is exhibited. Thus 
 $\R {\mathbb L}(\Xi) \ge 0$, and hence, since $\Xi = \Xi^{\vee}$, 
 also ${\mathbb L}_{1}(\Xi) \ge 0$.  The $\epsilon$-term is also 
 nonnegative since $\| \psi(w)\| < 1$ for each $w \in \Omega_{0}$.  
 It now follows that $\| R_{\psi} \| \le 1$ as asserted.  
     \end{proof}
     
 To make use of the hypothesis that $S \in \mathcal{SA}_{\Psi}(\cU, 
 \cY)$, we need to convert the space $\cH_{{\mathbb L}_{1}, 
 \epsilon}$ to a reproducing kernel space.  This is done as follows; 
 it is at this point that we make use of the 
 $\epsilon$-regularization of the ${\mathcal H}_{{\mathbb 
 L}_{1}}$-inner product.  
 
 \begin{lemma}  \label{L:HLtoHK}
     The space $(\cH_{{\mathbb L}_{1}, \epsilon})_{\cY}$ is 
     isometrically equal to a reproducing kernel Hilbert spaces 
     $\cH(K)_{\cY}$ for a positive kernel $K \in \cK_{\Psi}(\cY)$.
     \end{lemma}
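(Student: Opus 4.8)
The plan is to exhibit $\cH_{{\mathbb L}_{1},\epsilon}$ itself as the reproducing kernel Hilbert space $\cH(K)$ of a positive $\cL(\cY)$-valued kernel $K$ on $\Omega_{0}$, invoking Theorem \ref{T:posker}, and then to transfer the conclusion to the tensored space via the shorthand \eqref{shorthand} and Remark \ref{R:shorthand}. The one hypothesis of Theorem \ref{T:posker} that requires work is that the point-evaluation maps $\ev_{w}\colon f\mapsto f(w)$ are bounded from $\cH_{{\mathbb L}_{1},\epsilon}$ into $\cY$ for each $w\in\Omega_{0}$, and this is precisely where the $\epsilon$-regularization of the inner product is used.

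First I would prove the point-evaluation bound. For $f\in\cH_{{\mathbb L}_{1},\epsilon}$, the kernel $\Delta_{f,f}(z,w)=f(z)f(w)^{*}$ is a positive $\cL(\cY)$-valued kernel on $\Omega_{0}$, hence lies in $\cC$ by Lemma \ref{L:posdefin}, so $\R{\mathbb L}(\Delta_{f,f})\ge 0$ by the defining property of ${\mathbb L}$, and since $\Delta_{f,f}^{\vee}=\Delta_{f,f}$ this gives ${\mathbb L}_{1}(\Delta_{f,f})=\R{\mathbb L}(\Delta_{f,f})\ge 0$ by \eqref{easy1}. Using $\operatorname{tr}\bigl(\Delta_{f,f}(w,w)\bigr)=\operatorname{tr}\bigl(f(w)f(w)^{*}\bigr)=\|f(w)\|^{2}_{\cY}$ we obtain
$$
\|f\|^{2}_{\cH_{{\mathbb L}_{1},\epsilon}} = {\mathbb L}_{1}(\Delta_{f,f}) + \epsilon^{2}\sum_{w\in\Omega_{0}}\|f(w)\|^{2}_{\cY} \ \ge\ \epsilon^{2}\,\|f(w_{0})\|^{2}_{\cY}
$$
for each fixed $w_{0}\in\Omega_{0}$, so $\|\ev_{w_{0}}\|\le\epsilon^{-1}$. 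In particular the pre-inner product already has no nonzero null vectors, and since $\Omega_{0}$ is finite, an $\cH_{{\mathbb L}_{1},\epsilon}$-Cauchy sequence $\{f_{n}\}$ is Cauchy at each point of $\Omega_{0}$ by the same estimate; its pointwise limit is a $\cY$-valued function on $\Omega_{0}$, which I identify with the corresponding element of the completion. This identification is injective: a completion element vanishing at every point of $\Omega_{0}$ is the limit of functions $f_{n}$ with $f_{n}(w)\to 0$ for all $w$, whence $\Delta_{f_{n},f_{n}}\to 0$ in $\cV$, so ${\mathbb L}(\Delta_{f_{n},f_{n}})\to 0$ by continuity of ${\mathbb L}$ and the $\epsilon$-term tends to $0$, forcing $\|f_{n}\|_{\cH_{{\mathbb L}_{1},\epsilon}}\to 0$. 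Thus $\cH_{{\mathbb L}_{1},\epsilon}$ is a Hilbert space of $\cY$-valued functions on $\Omega_{0}$ with bounded point evaluations.

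By Theorem \ref{T:posker} (applied as in the proof of Theorem \ref{T:HKXconverse}: from the boundedness of the point evaluations), the function $K(z,w):=\ev_{z}\,\ev_{w}^{*}\in\cL(\cY)$ is a positive kernel on $\Omega_{0}$ and $\cH_{{\mathbb L}_{1},\epsilon}=\cH(K)$ isometrically. By Remark \ref{R:shorthand} together with \eqref{shorthand}, tensoring on the right by $\cC_{2}(\cX,{\mathbb C})$ then gives $(\cH_{{\mathbb L}_{1},\epsilon})_{\cX}=\cH(K)_{\cX}$ isometrically for every coefficient Hilbert space $\cX$; the case $\cX=\cY$ is the asserted identification $(\cH_{{\mathbb L}_{1},\epsilon})_{\cY}=\cH(K)_{\cY}$. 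Finally, $K\in\cK_{\Psi}(\cY)$ means exactly that $R_{\psi}$ is contractive from $\cH(K)_{\cY_{T}}$ to $\cH(K)_{\cU_{T}}$ for every $\psi\in\Psi$; since $\cH(K)_{\cY_{T}}=(\cH_{{\mathbb L}_{1},\epsilon})_{\cY_{T}}$ and $\cH(K)_{\cU_{T}}=(\cH_{{\mathbb L}_{1},\epsilon})_{\cU_{T}}$, this is precisely Lemma \ref{L:Rpsi}, completing the proof. The main obstacle is the point-evaluation bound of the second paragraph: it is needed only to guarantee an honest functional Hilbert space surviving the Hausdorff completion, and the $\epsilon^{2}$-term is inserted for exactly this purpose, since ${\mathbb L}_{1}(\Delta_{f,f})$ alone provides no such lower bound.
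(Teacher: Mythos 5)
Your proof is correct and reaches the same conclusion by a slightly repackaged route.

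The paper applies Theorem \ref{T:HKXconverse} directly to the tensored space $(\cH_{{\mathbb L}_{1},\epsilon})_{\cY}$: the $\epsilon^{2}$-term gives $\|\ev_{w}\|\le 1/\epsilon$ (condition (1)), and condition (2) (the right $\cL(\cY)$-module structure intertwining point evaluations) is noted to be automatic from Lemma \ref{L:HLtensor}'s identification $(\cH_{{\mathbb L}_{1},\epsilon})_{\cY}\cong\cH_{{\mathbb L}_{1},\epsilon}\otimes\cC_{2}(\cY,{\mathbb C})$. Admissibility then follows from Lemma \ref{L:Rpsi}, exactly as you argue. Your variant instead first exhibits the \emph{untensored} space $\cH_{{\mathbb L}_{1},\epsilon}$ as $\cH(K)$ with $K(z,w)=\ev_{z}\ev_{w}^{*}$ (the converse direction of Theorem \ref{T:posker}, as also invoked inside the proof of Theorem \ref{T:HKXconverse}), and only afterwards tensors with $\cC_{2}(\cX,{\mathbb C})$, combining Lemma \ref{L:HLtensor} on one side with Remark \ref{R:shorthand} on the other. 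This trades the module-structure check in Theorem \ref{T:HKXconverse} for a verification that the two tensor identifications match; either way, the $\epsilon^{2}$-regularization is what makes the point evaluations bounded, and Lemma \ref{L:Rpsi} supplies $\Psi$-admissibility. The extra care you take with injectivity of the realization of completion elements as functions (so that $\cH_{{\mathbb L}_{1},\epsilon}$ genuinely ``consists of'' $\cY$-valued functions) is a point the paper leaves implicit; it is justified as you explain, using weak-$*$ continuity of $\mathbb{L}$ and finiteness of $\Omega_{0}$, and the argument is sound.
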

     
     \begin{proof}[Proof of lemma]  We wish to apply Theorem 
	 \ref{T:HKXconverse} with $\cE$ and $\cX$ equal to $\cY$ and 
	 with $\Omega_{0}$ equal to $\Omega$.
	 To this end, we note that elements of $\left( \cH_{{\mathbb 
	 L}_{1}, \epsilon} \right)_{\cY}$ are $\cC_{2}(\cY)$-valued 
	 functions, at least on the dense set before the 
	 Hausdorff-completion step is carried out in the construction 
	 of the space.  However, the presence of the term with the 
	 $\epsilon^{2}$ factor in the definition of the $\left( 
	 \cH_{{\mathbb L}_{1}, \epsilon}\right)_{\cY}$-inner product 
	 guarantees that the point-evaluation map $\ev_{w} \colon 
	 \left( \cH_{{\mathbb L}_{1}, \epsilon}\right)_{\cY} \to 
	 \cC_{2}(\cY)$ is bounded with norm at most $1/\epsilon$.
	 Hence condition (1) in Theorem \ref{T:HKXconverse} is 
	 verified.  Condition (2) is straightforward since 
	 $\left(\cH_{{\mathbb L}_{1}, \epsilon}\right)_{\cY}$ is 
	 itself a tensor-product space $\cH_{{\mathbb L}_{1}, 
	 \epsilon} \otimes \cC_{2}(\cY, {\mathbb C})$.
	 We conclude that $\left(\cH_{{\mathbb L}_{1}, 
	 \epsilon}\right)_{\cY}$ is isometrically equal to a 
	 reproducing kernel Hilbert space $\cH(K)_{\cY}$ for a uniquely 
	 determined $\cL(\cY)$-valued positive kernel $K$.
	 
	 Finally we must verify that $K$ is $\Psi$-admissible.  But 
	 this is an immediate consequence of Lemma \ref{L:Rpsi}.
	 \end{proof}
	 
	 To conclude the proof of Step 1 (the case where $\Omega_{0}$ 
	 if finite), we proceed as follows.  Let $K$ be the positive 
	 kernel identified in Lemma \ref{L:HLtoHK}.  Since $K \in 
	 \cK_{\Psi}(\cY)$, we use the assumption that $S$ is in the 
	 Schur-Agler class $\mathcal{SA}_{\Psi}(\cU, \cY)$ to 
	 conclude that the operator $R_{S}$ of right multiplication by 
	 $S$ is contractive from $\cH(K)_{\cY}$ to $\cH(K)_{\cU}$.  As Lemma 
	 \ref{L:HLtoHK} also tells us that $\cH(K)_{\cY}$ is 
	 isometrically equal to $\left(\cH_{{\mathbb L}_{1}, 
	 \epsilon}\right)_{\cY}$, trivially we can also say that 
	 $R_{S}$ is contractive from 
	 $\left(\cH_{{\mathbb L}_{1,\epsilon}}\right)_{\cY}$ to  $\left(\cH_{{\mathbb 
	 L}_{1,\epsilon}}\right)_{\cU}$.  The criterion for this to 
	 be the case is that
	 $$ \| f \|^{2}_{(\cH_{{\mathbb L}_{1}, \epsilon})_{\cY}} -
	 \| R_{S} f \|^{2}_{(\cH_{{\mathbb L}_{1}, \epsilon})_{\cU}}
	 \ge 0 \text{ for all } f \in \left(\cH_{{\mathbb L}_{1}, 
	 \epsilon}\right)_{\cY},
	 $$
	 or, equivalently
	$$
	{\mathbb L}_{1}\left( \Delta_{f,f} - \Delta_{fS_{0}, fS_{0}} \right) 
	 + \epsilon^{2} \sum_{w \in \Omega_{0}} \operatorname{tr}\left(  
	 \Delta_{f,f}(w,w) - \Delta_{fS_{0}, fS_{0}}(w,w) \right) \ge 0 
	 \text{ for all } f,
	 $$
	 where $\Delta_{f,f}(z,w) - \Delta_{fS_{0}, fS_{0}}(z,w) = 
	 f(z) \Xi_{S}(z,w) f(w)^{*}$.  In particular, taking 
	 $f(z) = P_{n}$ for all $z \in \Omega_{0}$ where $\{P_{n}\}$ is 
	 an increasing sequence of finite-rank orthogonal projections 
	 converging strongly to the identity operator $I_{\cY}$  gives us
	 $$
	 {\mathbb L}_{1}(P_{n}\Xi_{S_{0}}P_{n}) + \epsilon^{2} 
	 \sum_{z \in  \Omega_{0}} \operatorname{tr}\left( P_{n} 
	 \Xi_{S_{0}}(z,z) P_{n} \right) \ge 0.
	 $$
	 As this holds for all $\epsilon > 0$, we may take the limit 
	 as $\epsilon \to 0$ (while holding $n$ fixed) to get
	 \begin{equation}   \label{L1Xinpos}
	 {\mathbb L}_{1}(P_{n} \Xi_{S_{0}} P_{n}) \ge 0
	 \end{equation}
	 for all $n$.  By the weak-$*$ continuity of ${\mathbb 
	 L}_{1}$ we have that 
	 $$
	 \lim_{n \to \infty}{\mathbb L}_{1}(P_{n} \Xi_{S_{0}} P_{n}) =
	 {\mathbb L}_{1}(\Xi_{S_{0}}).
	 $$
	 Taking limits in 
	 \eqref{L1Xinpos} then gives us ${\mathbb L}_{1}(\Xi_{S_{0}}) \ge 0$. 
	 As $\Xi_{S_{0}} = \Xi_{S_{0}}^{\vee}$, this gives us finally $\R {\mathbb 
	 L}(\Xi_{S_{0}}) \ge 0$ as required, and we conclude that 
	 $S_{0} \in \cC$ as wanted.  This concludes the proof of Step 
	 1.
	 
	 \smallskip
	 
	 \textbf{Step 2: $\Omega_{0}$ is not necessarily finite.}
	 
	 We now remove that assumption that $\Omega_{0}$ is finite.  
	 It is now understood how this step is efficiently handled as 
	 an application of the Kurosh Theorem (see \cite{DMM07, DM07}).
	 By Step 1, we know that for each finite subset $\Omega_{F}$ 
	 of $\Omega$, there is an associated completely positive 
	 kernel $\Gamma_{F}$ (not necessarily uniquely determined) so that the 
	 Agler decomposition
	 \begin{equation}  \label{AglerdecomF}
	    \Xi_{S_{0}}(z,w) := I - S_{0}(z)S_{0}(w)^{*} = 
	    \Gamma_{\Omega_{F}}(z,w)[ I - {\mathbb E}(z) {\mathbb E}(w)^{*} ]
	 \end{equation}
	 holds for all $z,w \in \Omega_{F}$. 
	 To set up the Kurosh Theorem, for each finite 
	 subset $\Omega_{F} \subset \Omega$, we let $\Phi_{\Omega_{F}}$ denote 
the  collection
$$
\Phi_{\Omega_{F}} = \{ \Xi \colon \Xi \text{ completely 
positive kernel such that \eqref{AglerdecomF} holds for } z,w \in 
\Omega_{F}\}.
$$
By applying the argument used in the proof of Lemma \ref{L:Cclosed}, 
one can see that $\Phi_{\Omega_{F}}$ is compact in the pointwise weak-$*$ 
convergence topology inherited from the space of 
$\cL(C_{b}(\Psi, \cL(\cY_{T})), \cL(\cY))$-valued functions on $\Omega \times \Omega$.  
The Kurosh Theorem (see \cite[page 75]{AP}) tells us that, for each finite 
subset $\Omega_{F}$ of
$\Omega$, there is a choice of completely positive kernel 
$\Gamma_{\Omega_{F}}$ for which \eqref{AglerdecomF} holds on 
$\Omega_{F}$ such that, in addition, whenever $\Omega_{F}, 
\Omega_{F'}$ are two subsets of $\Omega$ with $\Omega_{F} \subset 
\Omega_{F'}$, then $\Gamma_{\Omega_{F'}}|_{\Omega_{F} \times \Omega_{F}} = 
\Gamma_{\Omega_{F}}$.  We may then define a completely positive 
kernel $\Gamma$ on all of $\Omega \times \Omega$ by
$$
  \Gamma(z,w) = \Gamma_{\Omega_{F}}(z,w) \text{ where } \Omega_{F} 
  \text{ finite,} \quad z,w \in \Omega_{F}.
$$
The construction guarantees that $\Gamma$ is well defined and the 
fact that each $\Gamma_{\Omega_{F}}$ is completely positive on 
$\Omega_{F}$ guarantees that $\Gamma$ is completely positive as a 
kernel on all of $\Omega$.  We have now completed the proof of (1) 
$\Rightarrow$ (2) in Theorem \ref{T:SchurAgler}.
\end{proof}

\begin{proof}[Proof of (2) $\Rightarrow$ (3)]
    We are given a completely positive kernel $\Gamma$ on $\Omega_{0}$ so that 
    \eqref{Aglerdecom} holds for $z,w \in \Omega_{0}$.  By condition 
    (3$^{\prime}$) in Theorem \ref{T:cpker}, $\Gamma$ has a 
    decomposition of the form
    $$
    \Gamma(z,w)[g] = H(z) \rho(g) H(w)^{*}
    $$
 where $H \colon \Omega_{0} \to \cL(\cX, \cY)$ for an auxiliary 
 Hilbert space $\cX$ which also carries a $*$-representation $\rho$ 
 of the $C^{*}$-algebra $C_{b}(\Psi, \cL(\cY_{T}))$.  
 From \eqref{Aglerdecom} we then deduce
 \begin{align*}
     I - S_{0}(z) S_{0}(w)^{*} & = \Gamma(z,w)[I - {\mathbb E}(z) 
     {\mathbb E}(w)^{*}] \\
     & = H(z) \rho(I - {\mathbb E}(z) {\mathbb E}(w)^{*}) H(w)^{*} \\
     & = H(z) H(w)^{*} - H(z) L_{{\mathbb E}(z)^{*}}^{*} L_{{\mathbb 
     E}(w)^{*}} H(w)^{*}
 \end{align*}
 where we use \eqref{tensorid}.  This in turn can be rearranged as
 $$
 H(z) L_{{\mathbb E}(z)^{*}}^{*} L_{{\mathbb 
     E}(w)^{*}} H(w)^{*} + I = H(z) H(w)^{*} + S_{0}(z) S_{0}(w)^{*}
 $$
 which leads to the inner product identity
 \begin{align*}
 &  \langle L_{{\mathbb E}(w)^{*}}H(w)^{*} y_{w}, L_{{\mathbb E}(z)^{*}} 
 H(z)^{*} y_{z} \rangle_{C_{b}(\Psi, \cL(\cY_{T}, \cU_{T}) \otimes 
 \cX} + \langle y_{w}, y_{z} \rangle_{\cY}  \\
 & \quad =
 \langle H(w)^{*} y_{w}, H(z)^{*} y_{z} x \rangle + \langle 
 S_{0}(w)^{*} y_{w}, S_{0}(z)^{*} y_{z} \rangle_{\cU}
 \end{align*}
 for arbitrary $y_{w}$ and $y_{z}$ in $\cY$.
 It then follows that the mapping $V$ given by
 \begin{equation}   \label{V}
  V \colon \begin{bmatrix} L_{{\mathbb E}(w)^{*}} H(w)^{*} y_{w} \\ 
  y_{w} \end{bmatrix} \mapsto \begin{bmatrix} H(w)^{*} y_{w} \\ 
  S_{0}(w)^{*} y_{w} \end{bmatrix}
 \end{equation}
 extends by linearity and continuity to a well-defined isometry from 
 the subspace
 $$
 \cD: = \overline{\operatorname{span}} \left\{ \begin{bmatrix} 
 L_{{\mathbb E}(w)^{*}} H(w)^{*} y_{w} \\ y_{w} \end{bmatrix} \colon 
 y_{w} \in \cY, \, w \in \Omega \right\} \subset
 \begin{bmatrix} C_{b}(\Psi, \cL(\cY_{T}, \cU_{T}))\otimes \cX) \\ 
     \cY \end{bmatrix}
 $$
 onto the subspace
 $$
 \cR: =  \overline{\operatorname{span}} \left\{ \begin{bmatrix} 
 H(w)^{*} y_{w} \\ S_{0}(w)^{*} y_{w} \end{bmatrix} \colon 
 y_{w} \in \cY, \, w \in \Omega \right\} \subset
 \begin{bmatrix}  \cX \\ \cU \end{bmatrix}.
 $$
 By replacing $\cX$ with $\cX' = \cX \oplus \widetilde \cX$ where 
 $\widetilde \cX$ is an infinite-dimensional Hilbert space if 
 necessary, we can arrange that the defect spaces $\left[ 
 \begin{smallmatrix} \cX' \\ \cY \end{smallmatrix} \right]  \ominus \cD$ 
 and $\left[ \begin{smallmatrix} \cX' \\ \cU \end{smallmatrix} \right] \ominus \cR$ have the same dimension.  
 We may also assume 
 that $\widetilde \cX$ is equipped with some representation $\widetilde 
 \rho$ of $C_{b}(\Psi, \cL(\cY_{T}))$ and hence $\cX'$ is equipped 
 with the representation $\rho' = \rho \oplus \widetilde \rho$.
 We now assume that all this has been done and drop the prime 
 notation; thus without loss of generality we have 
 $\operatorname{dim}\left[ \begin{smallmatrix} \cX  \\ \cY 
\end{smallmatrix} \right] \ominus \cD = \operatorname{dim} \left[ 
\begin{smallmatrix} \cX  \\ \cU \end{smallmatrix} \right] \ominus 
 \cR$ and $\cX$ is equipped with a $*$-representation $\rho$ of 
 $C_{b}(\Psi, \cL(\cY_{T}))$.  
 
 We now let $V_{0}$ be any unitary transformation from $\left[ 
 \begin{smallmatrix} C_{b}(\Psi, \cL(\cY_{T}, \cU_{T})) \otimes \cX  \\ \cY \end{smallmatrix} \right] \ominus 
     \cD$ onto $\left[ \begin{smallmatrix} \cX \\ \cU 
 \end{smallmatrix} \right] \ominus \cR$ and set 
 \begin{align*}
 \bU^{*}  = V \oplus V_{0}  \colon  &  \left[ \begin{smallmatrix}  C_{b}(\Psi, \cL(\cY_{T}, \cU_{T})) \otimes \cX \\  
 \cY\end{smallmatrix} \right] \cong \cD \oplus 
 \left(\left[ \begin{smallmatrix}  C_{b}(\Psi, \cL(\cY_{T}, \cU_{T})) \otimes\cX \\ \cY \end{smallmatrix} 
 \right]  \ominus \cD \right)   \\
 & \quad  \to
 \left[ \begin{smallmatrix} \cX \\ \cU \end{smallmatrix} \right] \cong  \cR \oplus 
\left( \left[ \begin{smallmatrix} \cX \\ \cU \end{smallmatrix} \right]
 \ominus \cR \right).
 \end{align*}
 We may then write out $\bU^{*}$ as a block $2 \times 2$-matrix
 $$
 \bU = \begin{bmatrix} A^{*} & C^{*} \\ B^{*} & D^{*} \end{bmatrix} 
 \colon \begin{bmatrix}  C_{b}(\Psi, \cL(\cY_{T}, \cU_{T})) \otimes \cX
 \\ \cY \end{bmatrix} \to \begin{bmatrix}  \cX \\ \cU \end{bmatrix}.
 $$
 Since $\bU^{*}$ is an extension of $V$ given by \eqref{V}, we have
 \begin{equation} \label{bUaction}
 \begin{bmatrix} A^{*} & C^{*} \\ B^{*} & D^{*} \end{bmatrix} 
     \begin{bmatrix} L_{{\mathbb E}(w)^{*}}H(w)^{*} y_{w} \\ 
	 y_{w}\end{bmatrix} = \begin{bmatrix} H(w)^{*} y_{w} \\ 
	 S_{0}(w)^{*} y_{w} \end{bmatrix}.
\end{equation}
The first row of \eqref{bUaction} gives
$$
A^{*} L_{{\mathbb E}(w)^{*}}H(w)^{*} y_{w} + C^{*} y_{w} = H(w)^{*} 
y_{w}.
$$
Since $\sup_{\psi} \{ \| \psi(w)\| \} < 1$ by the assumption 
\eqref{test-axiom} and since $\|A^{*}\| \le 1$ as $\bU$ is unitary, we 
see that $I - A^{*} L_{{\mathbb E}(w)^{*}}$ is invertible and, by
the arbitrariness of $y_{w} \in \cY$, we can 
solve \eqref{bUaction} to get
$$
  H(w)^{*} = (I - A^{*} L_{{\mathbb E}(w)^{*}})^{-1} C^{*}.
$$
Plugging this into the second row of \eqref{bUaction} then gives
$$
  B^{*} L_{{\mathbb E}(w)^{*}}(I - A^{*} L_{{\mathbb E}(w)^{*}})^{-1} 
  C^{*} + D^{* }= S_{0}(w)^{*}.
$$
Taking adjoints and replacing $w$ by $z \in \Omega_{0}$ leads to the 
realization formula \eqref{S0realization}.

We actually get a little bit more.  The right-hand side of 
\eqref{S0realization} makes sense for $z$ equal to any point 
in $\Omega$.  Thus we have actually proved: (2) $\Rightarrow$ 
(3$^{\prime}$) 
where the precise statement of (3$^{\prime}$) is:

\begin{enumerate}
    \item[(3$^{\prime}$)] {\em There is a $\Psi$-unitary colligation 
    $\bU$ as in \eqref{bUcol} such that $S_{0}$ has an  extension to 
    an $\cL(\cU, \cY)$-valued function $S$ defined on all of $\Omega$ 
    having the transfer-function realization
    \begin{equation}   \label{Srealization}
	S(z) = D + C (I - L^{*}_{{\mathbb E}(z)^{*}} A)^{-1} 
	L^{*}_{{\mathbb E}(z)^{*}} B
   \end{equation}
   for $z \in \Omega$.}
\end{enumerate}
 \end{proof}
 
 \begin{proof}[Proof of (3) $\Rightarrow$ (2)]  We assume that we 
     have  a transfer-function realization \eqref{S0realization} and 
     we must produce a completely positive kernel $\Gamma$ so that 
     \eqref{Aglerdecom} holds.  There is a natural candidate, namely:
     \begin{equation}   \label{candidate}
  \Gamma(z,w)[g] = C (I - L^{*}_{{\mathbb E}(z)^{*}}A)^{-1} \rho(g) 
  (I - A^{*} L_{{\mathbb E}(w)^{*}})^{-1} C^{*}.
  \end{equation}
  
  The candidate is certainly a completely positive kernel since the 
  formula \eqref{candidate} exhibits its Kolmogorov decomposition 
  (condition (3$^{\prime}$) in Theorem \ref{T:cpker} with $H(z) =  C 
  (I - L^{*}_{{\mathbb E}(z)^{*}}A)^{-1}$ and $\pi = \rho$).
  The verification of \eqref{Aglerdecom} amounts to the identity
  \begin{equation}  \label{toshow}
      I - S_{0}(z) S_{0}(w)^{*} = C (I - L^{*}_{{\mathbb 
      E}(z)^{*}}A)^{-1} \rho(I - {\mathbb E}(z) {\mathbb E}(w)^{*}) (I - 
      A^{*} L_{{\mathbb E}}(w)^{*})^{-1} C^{*}.
   \end{equation}
   Using the realization formula \eqref{S0realization} for 
   $S_{0}(z)$ and the relations 
   $$
   AA^{*} + BB^{*} = I, \quad AC^{*} + B D^{*} = 0, \quad CC^{*} + 
   DD^{*} = I
   $$
   coming out of the coisometric property $\bU \bU^{*} = I$ of $\bU$ 
   then give us
   \begin{align}
     &  I - S_{0}(z) S_{0}(w)^{*}  \notag \\
     & \quad = I - [D + C (I - L_{{\mathbb 
       E}(z)^{*}}^{*} A)^{-1}L^{*}_{{\mathbb E}(z)^{*}} B ] [D^{*} + B^{*} L_{{\mathbb 
       E}(w)^{*}} (I - A^{*} L_{{\mathbb E}(w)^{*}})^{-1} C^{*} ] 
       \notag \\
       & \quad  = I - D D^{*} - C (I - L_{{\mathbb 
       E}(z)^{*}}^{*} A)^{-1}L^{*}_{{\mathbb E}(z)^{*}} B D^{*} - 
       D  B^{*} L_{{\mathbb 
       E}(w)^{*}} (I - A^{*} L_{{\mathbb E}(w)^{*}})^{-1} C^{*}\notag  \\
       & \quad \quad 
       -  C (I - L_{{\mathbb 
       E}(z)^{*}}^{*} A)^{-1}L^{*}_{{\mathbb E}(z)^{*}} B B^{*} L_{{\mathbb 
       E}(w)^{*}} (I - A^{*} L_{{\mathbb E}(w)^{*}})^{-1} C^{*}\notag  \\
       & = C C^{*} + C (I - L_{{\mathbb E}(z)^{*}}^{*} A)^{-1}L^{*}_{{\mathbb E}(z)^{*}} A C^{*} 
       + C A^{*}  L_{{\mathbb E}(w)^{*}} (I - A^{*} L_{{\mathbb 
       E}(w)^{*}})^{-1} C^{*}  \notag \\
       & \quad \quad 
       +  C (I - L_{{\mathbb E}(z)^{*}}^{*} A)^{-1}L^{*}_{{\mathbb E}(z)^{*}} (AA^{*} - I)
       L_{{\mathbb E}(w)^{*}} (I - A^{*} L_{{\mathbb E}(w)^{*}})^{-1} 
       C^{*} \notag  \\
       & = C (I - L^{*}_{{\mathbb E}(z)^{*}} A)^{-1} X (I - A^{*} L_{{\mathbb E}(w)^{*}})^{-1} C^{*}
       \label{get1}
   \end{align}
   where we have set $X$ equal to
   \begin{align}
       X & = (I - L^{*}_{{\mathbb E}(z)^{*}} A) (I - A^{*}  L_{{\mathbb E}(w)^{*}}) 
       + L^{*}_{{\mathbb E}(z)^{*}} A (I - A^{*} L_{{\mathbb 
       E}(w)^{*}}) \notag  \\
       & \quad \quad + (I - L^{*}_{{\mathbb E}(z)^{*}} A) A^{*} L_{{\mathbb E}(w)^{*}}
      + L^{*}_{{\mathbb E}(z)^{*}} A A^{*} L_{{\mathbb E}(w)^{*}} - 
       L^{*}_{{\mathbb E}(z)^{*}} L_{{\mathbb E}(w)^{*}} \notag  \\
       & = I - L^{*}_{{\mathbb E}(z)^{*}} L_{{\mathbb E}(w)^{*}} 
       \notag \\
       & = \rho( I - {\mathbb E}(z) {\mathbb E}(w)^{*} )
       \label{get2}
   \end{align}
   where we used \eqref{tensorid} for the last step.  Combining 
   \eqref{get1} and \eqref{get2} gives us \eqref{toshow} as 
   required.
     \end{proof}
     
     \begin{proof}[Proof of (2) $\Rightarrow$ (1) if 
	 $\operatorname{dim} \cY_{T} < \infty$]
	 We assume that we have an Agler decomposition 
	 \eqref{Aglerdecom} and must show that $S_{0}$ can be 
	 extended to an $S$ defined on all of $\Omega$ which is in 
	 the Schur-Agler class $\mathcal{SA}_{\Psi}(\cU, \cY)$.  
	 Toward this end, we note that the proof of (2) $\Rightarrow$ 
	 (3) really proved (3$^{\prime}$), i.e., that $S_{0}$ extends 
	 to an $S$ defined on all of $\Omega$ given by the 
	 realization formula \eqref{Srealization}.  Therefore the 
	 argument behind (3) $\Rightarrow$ (2) actually gives us an 
	 Agler decomposition \eqref{Aglerdecom} valid for the 
	 extended $S$ which holds for $z,w$ in all of $\Omega$.  In 
	 this way we may assume that $S$ is given to us defined on 
	 all of $\Omega$ and we are given the completely positive 
	 kernel $\Gamma$ on all of $\Omega$ giving rise to the Agler 
	 decomposition \eqref{Aglerdecom} for $S$.
	 
	 To check that $S$ is in the Schur-Agler class 
	 $\mathcal{SA}_{\Psi} (\cU, \cY)$, we must verify that the 
	 operator $R_{S}$ of right multiplication by $S$ is 
	 contractive from $ \cH(K)_{\cY}$ to 
	 $\cH(K)_{\cU}$ for any choice of admissible 
	 kernel $K \in \cK_{\Psi}(\cY)$.  Toward this end, we reverse 
	 the procedure used in the proof of (1) $\Rightarrow$ (2) as 
	 follows. 
	 
	 Given an admissible kernel $K \in \cK_{\Psi}$ and given any 
	 finite collection of points $z_{1}, \dots, z_{N} \in 
	 \Omega$, we must show that the kernel \eqref{kYSK} is a  
	 positive kernel for all choices of functions $Y \colon 
	 \{z_{1}, \dots, z_{n} \} \to \cC_{2}(\cY, \cU)$.  It 
	 suffices to consider the restriction $K_{0}$ of $K$ to the 
	 finite set $\Omega_{0} = \{z_{1}, \dots, z_{N}\}$.  Since $K 
	 \in \cK_{\Psi}(\cY)$, we know that the right multiplication 
	 operator $R_{\psi}$ is contractive from $ \cH(K_{0})_{\cY_{T}}$ 
	 to $\cH(K_{0})_{\cU_{T}}$ 
	 for each $\psi \in \Psi$.  Consider the modified kernel
	 $$
	 K_{0,\epsilon}(z,w) = K_{0}(z,w) + \epsilon^{2} \sum_{z \in 
	 \Omega_{0}} \delta_{z,w} I_{\cY}
	 $$
	 where $\delta_{z,w}$ is the Kronecker delta function equal 
	 to $1$ for $z=w$ and $0$ otherwise.  Since the values of 
	 $\psi$ are contractive, we see that $R_{\psi}$ is still 
	 contractive as an operator from $\cH(K_{0,\epsilon})_{\cY_{T}}$ to  
	 $\cH(K_{0,\epsilon})_{\cU_{T}}$ for each $\epsilon > 
	 0$.  Also, to show that $R_{S}$ is contractive from 
	 $\cH(K_{0})_{\cY}$ to  $\cH(K_{0})_{\cU}$, it is enough to show that $R_{S}$ 
	 is contractive from $ \cH(K_{0,\epsilon})_{\cY}$ to  $\cH(K_{0,\epsilon})_{\cU}$ 
	 for each $\epsilon > 0$.
	 
	 Our next goal is to construct a kernel $L_{\epsilon} \colon 
	 \Omega_{0} \times \Omega_{0} \to \cL(\cY)$ so that
	 \begin{equation}   \label{Linnerprod}
	 \langle f, g \rangle_{\cH(K_{0,\epsilon})} = \sum_{z,w \in 
	 \Omega_{0}} \operatorname{tr}\left( L_{\epsilon}(z,w) f(z) g(w)^{*} 
	 \right).
	 \end{equation}
	 To do this, define $L(z,w) \in \cL(\cY)$ by
	 $$
	 \langle L_{\epsilon}(z,w) u,\, v \rangle_{\cY} = \langle \delta_{z} 
	 u,\, \delta_{w} v \rangle_{\cH(K_{0, \epsilon}}
	 $$
	 where $\delta_{z}$ is the point-mass function
	 $$
	  \delta_{z}(z') = \begin{cases} 1 & \text{ if } z = z', \\
	     0 & \text{ otherwise}.
	     \end{cases}
	 $$
	 In terms of the kernel function $K_{0,\epsilon}$, one can 
	 verify the block-matrix identity
	 $$
	[ L_{\epsilon}(z,w) ]_{z,w, \in \Omega_{0}} = \left( [K_{0, 
	\epsilon}(z,w) ]_{z,w \in \Omega_{0}} \right)^{-1}.
	$$
	The fact that  $R_{\psi} \colon \cH(K_{0, 
	\epsilon})_{\cY_{T}} \to \cH(K_{0,\epsilon})_{\cU_{T}}$ is contractive can be 
	equivalently expressed as
	\begin{equation}   \label{Rpsicontractive}
	\sum_{z,w, \in \Omega_{0}} \operatorname{tr} \left( 
	L_{\epsilon}(z,w) f(z) (I - \psi(z) \psi(w)^{*}) f(w)^{*} 
	\right) \ge 0 \text{ for all } f \colon \Omega \to 
	\cC_{2}(\cY_{T}, \cY).
	\end{equation}
	To show that $R_{S} \colon \cH(K_{0, \epsilon})_{\cY} \to 
	\cH(K_{0, \epsilon})_{\cU}$ is contractive can be expressed 
	in a similar way as
\begin{equation}   \label{RScontractive}
    \sum_{z,w \in \Omega_{0}} \operatorname{tr} \left( 
    L_{\epsilon}(z,w) h(z) (I - S(z) S(w)^{*}) h(w)^{*} \right) \ge 0 
    \text{ for all } h \colon \Omega_{0} \to \cC_{2}(\cY).
\end{equation}

By assumption we are given an Agler decomposition \eqref{Aglerdecom} 
for $S$.  The completely positive kernel $\Gamma$ appearing in 
\eqref{Aglerdecom} in turn has a Kolmogorov decomposition
as in (3$^{\prime}$) in Theorem \ref{T:cpker}:
\begin{equation}  \label{Krep}
\Gamma(z,w)[g] = H(z) \rho(g) H(w)^{*}
\end{equation}
for a $*$-representation $\rho \colon C_{b}(\Psi, \cL(\cY_{T})) \to 
\cL(\cX)$.  We now use the assumption that $\operatorname{dim} \cY_{T} 
< \infty$.  This has the effect that $C_{b}(\Psi, \cL(\cY_{T}))$ is a 
CCR $C^{*}$-algebra and that any representation $\rho$ of 
$C_{b}(\Psi, \cL(\cY_{T}))$ is the direct integral of multiples of 
irreducible representations, where an irreducible representation 
$\pi_{0} \colon C(\Psi_{\beta}, \cL(\cY_{T})) \to \cL(\cY_{T})$ has 
the point-evaluation form $\pi_{0}(g) = g(\psi_{0})$ for some $\psi_{0} \in \Psi_{\beta}$;   
we refer to \cite{Arv} and \cite[Section 2.3]{GH} for fuller discussion. 
Thus we may assume that there are mutually singular measures 
$\mu_{\infty}, \mu_{1}, \mu_{2}, \dots $ defined on the Borel subsets 
of the Stone-\v{C}ech compactification $\Psi_{\beta}$ of $\Psi$ so that
$$
 \rho = \infty \cdot \pi_{\mu_{\infty}} \oplus 1 \cdot \pi_{\mu_{1}} 
 \oplus 2 \cdot \pi_{\mu_{2}} \oplus \cdots
$$
where
$$
  \pi_{\mu_{j}}(g)  \colon f(\psi) \mapsto g(\psi) f(\psi)
  \text{ on  }\cH_{\pi_{j}} : =  L^{2}_{\cY_{T}}(\mu_{j}) = L^{2}(\mu_{j}) \otimes \cY_{T}
 $$
 and where in general $n \cdot \pi$ refers to the $n$-fold inflation 
 of $\pi$:
 $$
 (n \cdot \pi)(g) = \begin{bmatrix}  \pi(g) & & \\ & \ddots & \\ & & 
 \pi(g) \end{bmatrix} \text{ on } (\cH_{\pi})^{n} : = 
 \bigoplus_{j=1}^{n} \cH_{\pi}.
 $$
 Thus we may assume that the representation space $\cX$ in \eqref{Krep} 
 decomposes as
 $$
  \cX = L^{2}_{\cY_{T}}(\mu_{\infty})^{\infty} \oplus \bigoplus_{r=1}^{\infty} 
  L^{2}_{\cY_{T}}(\mu_{r})^{r}.
 $$
 Therefore the operators $H(w)^{*}$ appearing in \eqref{Krep} 
 decompose as
 $$
 H(w)^{*} = \begin{bmatrix} H_{\infty}(w)^{*} \\ 
 \operatorname{col}_{r=1}^{\infty} H_{r}(w)^{*} \end{bmatrix}
 $$
 where each $H_{r}(w)^{*}$ is an operator from $\cY$ to 
 $L^{2}_{\cY_{T}}(\mu_{r})^{r}$.  This enables us to define an operator-valued 
 function $H_{r}(w, \psi)^{*}$ of $\psi \in \Psi_{\beta}$ according to 
 $$
   H_{r}(w, \psi)^{*} y = \left((H_{r}(w)^{*} y \right)(\psi).
 $$
 Then the adjoint  $H_{r}(z)$ of $H_{r}(z)^{*}$ is given via an integral 
 formula:
 $$
 H_{r}(z) \colon G(\psi) \mapsto \int_{\Psi_{\beta}} H_{r}(z,  \psi) 
 G(\psi) {\tt d}\mu_{r}(\psi).
 $$
 We conclude that the Agler decomposition \eqref{Aglerdecom} takes 
 the more detailed form
 \begin{align}
  I - S(z) S(w)^{*} =  &  \int_{\Psi_{\beta}} H_{\infty}(z, \psi) \left( 
 I_{\ell^{2}} \otimes (I - \psi(z) \psi(w)^{*}) \right) H_{\infty}(w, 
 \psi)^{*} {\tt d}\mu_{\infty}(\psi)   \notag \\
 & + 
 \sum_{r=1}^{\infty}  \int_{\Psi_{\beta}} H_{r}(z, \psi) \left( 
 I_{{\mathbb C}^{r}} \otimes (I - \psi(z) \psi(w)^{*}) \right) H_{r}(w, 
 \psi)^{*} {\tt d}\mu_{r}(\psi).
 \label{detailed}
 \end{align}
 Plugging this into the left-hand side of the desired inequality in  \eqref{RScontractive}
 and taking the integral to the outside gives us the sum over $z,w 
 \in \Omega_{0}$ of the following terms:
 \begin{align*}
    &  \int_{\Psi_{\beta}} \operatorname{tr} \left( 
    L_{\epsilon}(z,w) h(z) H_{\infty}(z, \psi) \left( 
 I_{\ell^{2}} \otimes (I - \psi(z) \psi(w)^{*}) \right) H_{\infty}(w, 
 \psi)^{*}h(w)^{*}\right) {\tt d}\mu_{\infty}(\psi)  + \\
 &    \sum_{r=1}^{\infty} \int_{\Psi_{\beta}}
 \operatorname{tr} \left( L_{\epsilon}(z,w) h(z)
  H_{r}(z, \psi) \left(I_{{\mathbb C}^{r}} \otimes (I - \psi(z) 
  \psi(w)^{*}) \right)  H_{r}(w,\psi)^{*} h(w)^{*}  \right) 
  {\tt  d}\mu_{r}(\psi).
 \end{align*}
 From \eqref{Rpsicontractive} we see that the sum over $z,w \in 
 \Omega_{0}$ of the integrand in each of these terms is nonnegative.  
 Hence the sum over $z,w$ of the integrals in nonnegative and 
 \eqref{RScontractive} follows as required.
 \end{proof}
 
 \begin{remark}\label{R:interpolation} {\em The interpolation problem for the class 
     $\mathcal{SA}_{\Psi}(\cU, \cY)$ can be formulated as follows:  {\em Given 
     a subset $\Omega_{0}$ of $\Omega$ and a function $S_{0} \colon 
     \Omega_{0} \to \cL(\cU, \cY)$,  give necessary and sufficient 
     conditions for the existence of an $S \in 
     \mathcal{SA}_{\Psi}(\cU, \cY)$ such that $S|_{\Omega_{0}} = 
     S_{0}$.} Assuming that $\operatorname{dim} \cY_{T} < \infty$, 
     one gets a solution criterion (arguably not particularly 
     practical at this level of generality) immediately from 
     the equivalence (1) $\Leftrightarrow$ (2) in Theorem 
     \ref{T:SchurAgler} (where we use (2) in the more concrete form 
     \eqref{detailed}):  {\em the 
     $\mathcal{SA}_{\Psi}(\cU, \cY)$-interpolation problem has  a 
     solution if and only if there exists a matrix-valued function $(\psi, z) 
     \mapsto H_{\psi}(z)$ on $\Psi_{\beta} \times \Omega_{0}$, 
     bounded and  measurable in $\psi$ for each $z$, together with a 
     finite measure $\mu$ on $\Psi_{\beta}$, so that 
     $$
     I - S_{0}(z) S_{0}(w)^{*} = \int_{\Psi_{\beta}} H_{\psi}(z) 
     \left( I_{\cX_{\psi}} \otimes (I - \psi(z) \psi(w)^{*}) \right) 
     H_{\psi}(w)^{*}\, {\tt d}\mu(\psi)
     $$
     for each $z,w \in \Omega_{0}$.}  Not so  apparent from the way 
     Theorem \ref{T:SchurAgler} is formulated is that condition (1) 
     by itself is also a criterion for solving the interpolation 
     problem.  Indeed, if we set $\Psi|\Omega_{0}$ equal to the 
     collection of restricted functions 
     \begin{equation}   \label{Psirestricted}
     \Psi|\Omega_{0} = \{ \psi|_{\Omega_{0}} \colon \psi \in \Psi\},
     \end{equation}
     we may view $\Psi|\Omega_{0}$ as itself a collection of test 
     functions generating a Schur-Agler class 
     $\mathcal{SA}_{\Psi|\Omega_{0}}(\cU, \cY)$ of $\cL(\cU, 
     \cY)$-valued functions defined only on $\Omega_{0}$.  The only 
     part of the hypothesis that $S_{0}$ extends to an $S \in 
     \mathcal{SA}_{\Psi}$ used to prove (1) $\Rightarrow$ (2) in 
     Theorem \ref{T:SchurAgler} is that then 
     $S_{0} \in \mathcal{SA}_{\Psi|\Omega_{0}}$.  We conclude that we 
     get another criterion for solution of the interpolation 
     problem:  {\em the $\mathcal{SA}_{\Psi}(\cU, \cY)$-interpolation 
     problem has a solution if and only if $S_{0} \in 
     \mathcal{SA}_{\Psi|\Omega_{0}}$.}  Let us say that the subset 
     $\cK_{\Psi}^{0}(\cY)$ of the set of admissible kernels 
     $\cK_{\Psi}(\cY)$ is a {\em generating set} for $\cK_{\Psi}(\cY)$ 
     if, for each kernel $K \in \cK_{\Psi}(\cY)$, there is a 
     kernel $K^{0} \in \cK^{0}_{\Psi}(\cY)$ such that $K$ is 
     congruent to $K^{0}$ in the sense that there is an operator function 
     $Y$ so that $K(z,w) = Y(z) K^{0}(z,w) Y(w)^{*}$.  It is easy to 
     check that the kernels of the form \eqref{kYSK} are positive on 
     $\Omega_{0}$ for all $Y$ and admissible $K$ if and only if all such 
     kernels are positive when the admissible $K$ is restricted to 
     those coming from the generating set $\cK^{0}_{\Psi}(\cY)$.  
     Hence we arrive at the following dual criterion for solution of 
     the $\mathcal{SA}_{\Psi}(\cU, \cY)$-interpolation problem:  {\em 
     the $\mathcal{SA}_{\Psi}(\cU, \cY)$-interpolation problem has a 
     solution if and only if the kernel
     $$
      k(z,w) = \operatorname{tr}\left( Y(w)^{*} (I - S_{0}(w)^{*} 
      S_{0}(z)) Y(z) K^{0}(z,w) \right)
      $$
      is a positive kernel on $\Omega_{0}$ for all $Y \colon 
      \Omega_{0} \to \cC_{2}(\cY, \cU)$ for all admissible kernels $K$
      from the generating set $\cK^{0}_{\Psi}(\cY)$.}  We illustrate 
      these ideas on the examples discussed in Section \ref{S:ex} 
      below.
      This duality pairing between admissible kernels and test 
      functions is central to the operator-algebra point of view of Paulsen and 
      Solazzo  toward interpolation theory (see \cite{PaulsenIEOT, 
      PaulsenJFA, PS}).
   }\end{remark}
 
 There is also an operator-algebra point of view toward the 
 Schur-Agler class.  For convenience in the following discussion, we 
 take all the coefficient spaces $\cU$, $\cY$, $\cU_{T}$, and 
 $\cY_{T}$ to be the same space $\cU$ although this probably is not 
 essential. We abbreviate the notation $\mathcal{SA}_{\Psi}(\cU, 
 \cU)$ to $\mathcal{SA}_{\Psi}(\cU)$.  Let $\Psi|\Omega_{0}$ be as in 
 \eqref{Psirestricted} and let $H^{\infty}_{\Psi|\Omega_{0}}(\cU)$ denote the 
 space of all $\cL(\cU)$-valued functions $S_{0}$ on the subset 
 $\Omega_{0}$ of $\Omega$ such that there exists a positive $M < 
 \infty$ so that the kernel $k_{X,S_{0},K,M}$ given by  \eqref{kXSKM} is a positive kernel on 
 $\Omega_{0}$ for all choices of $X \colon \Omega_{0} \to 
 \cC_{2}(\cE, \cU)$ and for all choices of $K$  for which the kernel 
 $k_{Y,\psi,K,1}$ is positive for all choices of $Y \colon \Omega_{0} 
 \to \cC_{2}(\cU)$ and $\psi \in \Psi$, or, what is the same, such that the right 
 multiplication operator $R_{S}$ has norm at most $M$ as an operator 
 on $\cH(K)_{\cU}$  for all positive kernels $K$ for which $R_{\psi}$ 
 has norm at most 1 on $\cH(K)_{\cU}$ for all $\psi \in \Psi$.  We 
 define the $H^{\infty}_{\Psi|\Omega_{0}}$-norm 
 $\|S\|_{H^{\infty}_{\Psi|\Omega_{0}}}$ as the infimum of all such 
 positive numbers $M$.  Then $H^{\infty}_{\Psi|\Omega_{0}}(\cU)$ is 
 an operator algebra with unit ball equal to the Schur-Agler class 
 $\mathcal{SA}_{\Psi|\Omega_{0}}(\cU)$.  The following 
 representation-theoretic characterization of the Schur-Agler class 
 will be convenient in Section \ref{S:cR} below.
 
 \begin{theorem}  \label{T:representation}
     Suppose that $\Psi$, $\Omega_{0} \subset \Omega$, and $S_{0}$ 
     are as in Theorem \ref{T:SchurAgler} with $\cU = \cY = \cU_{T} = 
     \cY_{T}$.  In addition to conditions (1), (2), (3) in Theorem 
     \ref{T:SchurAgler}, consider:
     
     \begin{enumerate}
	 \item[(4)]  For any representation $\pi \colon 
	 H^{\infty}_{\Psi|\Omega_{0}}(\cU) \to \cL(\cK)$ such that 
	 $\| \pi(\psi)\| \le 1$ for all $\psi \in \Psi$, it also 
	 holds that $\| \pi(S_{0})\| \le 1$.  
   \end{enumerate}
   Then (4) $\Rightarrow$ (1).  If $\operatorname{dim}\, \cU < \infty$, 
   then also (2) $\Rightarrow $ (4) and (1), (2), (3), and (4) are 
   all equivalent.
   \end{theorem}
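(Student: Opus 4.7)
The plan is to prove (4) $\Rightarrow$ (1) by testing hypothesis (4) against the canonical right-multiplication representations on admissible reproducing kernel spaces and then invoking the interpolation principle of Remark \ref{R:interpolation}; and to prove (2) $\Rightarrow$ (4) under $\dim \cU < \infty$ by transporting the transfer-function realization through an arbitrary test-function-contractive representation $\pi$, using the CCR structure of $C_{b}(\Psi, \cL(\cU))$.

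For (4) $\Rightarrow$ (1), I would take an arbitrary admissible kernel $K \in \cK_{\Psi|\Omega_{0}}(\cU)$ and consider the representation $\pi_{K} \colon H^{\infty}_{\Psi|\Omega_{0}}(\cU) \to \cL(\cH(K)_{\cU})$ defined by right multiplication, $\pi_{K}(f) = R_{f}$. The admissibility of $K$, formulated via Theorem \ref{T:RSbounded}, translates directly into $\|\pi_{K}(\psi)\| \le 1$ for each $\psi \in \Psi$ (interpreted on $\Omega_{0}$). Hypothesis (4) then yields $\|R_{S_{0}}\| \le 1$ on every such space, which is precisely the definition of $S_{0} \in \mathcal{SA}_{\Psi|\Omega_{0}}(\cU)$. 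The interpolation principle from Remark \ref{R:interpolation}---obtained by applying (1) $\Rightarrow$ (2) of Theorem \ref{T:SchurAgler} to the restricted data $(\Omega_{0}, \Psi|_{\Omega_{0}})$ to produce an Agler decomposition, then (2) $\Rightarrow$ (3$^{\prime}$) to build an extension $S$ on $\Omega$ via \eqref{Srealization}---produces the required $S \in \mathcal{SA}_{\Psi}(\cU)$ extending $S_{0}$, establishing (1).

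For (2) $\Rightarrow$ (4) under $\dim \cU < \infty$, I would apply (2) $\Rightarrow$ (3$^{\prime}$) to get an extension $S$ on $\Omega$ with a transfer-function realization involving a $\Psi$-unitary colligation $\bU$ and a $*$-representation $\rho$ of $C_{b}(\Psi, \cL(\cU))$. Given a representation $\pi$ of $H^{\infty}_{\Psi|\Omega_{0}}(\cU)$ with $\|\pi(\psi)\| \le 1$ for all $\psi \in \Psi$, the goal is to transport this realization through $\pi$ to obtain an analogous realization for the single operator $\pi(S_{0}) \in \cL(\cK)$. Under $\dim \cU < \infty$, $C_{b}(\Psi, \cL(\cU))$ is a CCR $C^{*}$-algebra and its $*$-representations decompose into point-evaluation pieces, as exploited in the passage to \eqref{detailed}; combining this with an Arveson/Stinespring dilation of $\pi$ restricted to the test functions should yield a $*$-representation $\tilde{\rho}$ of $C_{b}(\Psi, \cL(\cU))$ serving as a replacement for $\rho$. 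Contractivity of the resulting expression for $\pi(S_{0})$ then follows from the unitarity of $\bU$ together with $\|\tilde{\rho}({\mathbb E}(z))\| \le 1$.

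The main obstacle lies in the (2) $\Rightarrow$ (4) direction: coherently ``applying $\pi$'' to the continuously many $z$-dependent operators $\rho({\mathbb E}(z))$ and reassembling them into a realization of the single operator $\pi(S_{0})$. The CCR hypothesis on $C_{b}(\Psi, \cL(\cU))$, which flows from $\dim \cU < \infty$, is precisely what allows the dilation and its decomposition to proceed pointwise in a measurable fashion aligned with \eqref{detailed}. Without this hypothesis, the representation theory of $C_{b}(\Psi, \cL(\cU))$ becomes substantially more intricate and the substitution argument sketched here does not survive.
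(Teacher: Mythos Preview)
Your (4) $\Rightarrow$ (1) is essentially the paper's argument: both test hypothesis (4) on the right-multiplication anti-representation $\pi_K(G)=R_G$ on $\cH(K)_{\cU}$ for each admissible $K$, conclude that $S_0\in\mathcal{SA}_{\Psi|\Omega_0}(\cU)$, and then (the paper implicitly, you explicitly) invoke Remark \ref{R:interpolation} to produce the extension.

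For (2) $\Rightarrow$ (4) there is a genuine gap, and the ``main obstacle'' you flag is not resolved by the sketch you give. The realization \eqref{Srealization} is a $z$-parametrized formula for $S_0(z)$, whereas $\pi(S_0)$ is a single operator with no $z$-dependence; there is no coherent sense in which one can substitute a new representation $\tilde\rho$ for $\rho$ in \eqref{Srealization} and have the fixed operator $\pi(S_0)$ emerge. Your proposed construction of $\tilde\rho$ is also ill-posed: a Stinespring dilation takes as input a completely positive map on a $C^*$-algebra, but $\pi|_\Psi$ is merely a $\Psi$-indexed family of contractions $\{\pi(\psi)\}_{\psi\in\Psi}$ in $\cL(\cK)$, not a map defined on $C_b(\Psi,\cL(\cU))$, and there is no canonical way to promote it to one that would then interact correctly with the state-space data $A,B,C,D$.

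The paper's route is different and bypasses this obstacle. It does not pass through the realization at all; it stays with the detailed Agler decomposition \eqref{detailed} (this is precisely where the CCR hypothesis is used, to disintegrate $\rho$ into point evaluations) and applies the \emph{hereditary functional calculus}: for any kernel factored as $F(z)G(w)^*$ with $F,G\in H^\infty_{\Psi|\Omega_0}(\cU)$, declare $\pi\bigl(F(\cdot)G(\cdot)^*\bigr):=\pi(F)\pi(G)^*$. Applying this rule termwise to \eqref{detailed} and pushing $\pi$ past the integrals yields
\[
I-\pi(S_0)\pi(S_0)^*=\sum_r\int_{\Psi_\beta}\pi\bigl(H_r(\cdot,\psi)\bigr)\bigl(I\otimes(I-\pi(\psi)\pi(\psi)^*)\bigr)\pi\bigl(H_r(\cdot,\psi)\bigr)^*\,{\tt d}\mu_r(\psi),
\]
which is manifestly positive since each $\|\pi(\psi)\|\le 1$. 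The two-variable kernel identity becomes a single operator inequality in one stroke; no auxiliary representation $\tilde\rho$ is needed.
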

   
   \begin{proof}
   Assume (4) holds and suppose that $K \in 
   \cK_{\Psi|\Omega_{0}}(\cU)$ is an admissible kernel.  We now view 
   the map $\pi_{K} \colon H^{\infty}_{\Psi|\Omega_{0}}(\cU) \to 
   \cL(\cH(K)_{\cU})$ sending $G \in 
   H^{\infty}_{\Psi|\Omega_{0}}(\cU)$ to the right multiplication 
   operator $R_{G}$ on $\cH(K)_{\cU}$ as a representation 
   (technically, an anti-representation, but this does not affect the 
   final results).  By definition of $K \in 
   \cK_{\Psi|\Omega_{0}}(\cU)$, we have $\pi_{K}(\psi)\| \le 1$ for 
   each $\psi \in \Psi$.  Condition (4) then tells us that 
   $\pi(S_{0}) \| \le 1$, i.e., $R_{S_{0}}$ on $\cH(K)_{\cU}$ has 
   norm at most 1.  In this way we have verified condition (1).
   
   Conversely, we suppose $\operatorname{dim} \cY_{T} = 
   \operatorname{dim} \cU < \infty$ and that condition (2) holds.  As 
   in the proof of (2) $\Rightarrow$ (1) we see that (2) can be 
   written in the more explicit form \eqref{detailed}.  Given any 
   $\cL(\cU)$-valued kernel ${\mathbf K}(z,w)$ with a factorization 
   ${\mathbf K}(z,w) = F(z) G(w)^{*}$ with $F,G \in 
   H^{\infty}_{\Psi|\Omega_{0}}(\cU)$, we use the hereditary 
   functional calculus to extend a given representation $\pi$  of 
   $H^{\infty}_{\Psi|\Omega_{0}}(\cU)$ to such kernels according to 
   the rule
   $$
   \pi\left( F(z) G(w)^{*} \right) = \pi(F) \pi(G)^{*}.
   $$
   Applying $\pi$ to \eqref{detailed} (and using continuity to push 
   $\pi$ past the integral sign) gives
   \begin{align*}
      & I - \pi(S_{0}) \pi(S_{0})^{*}  =
       \int_{\Psi_{\beta}} \pi\left(H_{\infty}(\cdot, \psi) \right)
   \left(I_{\ell^{2}} \otimes (I - \pi(\psi) \pi(\psi)^{*} \right)
   \pi\left( H_{\infty}(\cdot, \psi) \right)^{*} \, {\tt 
   d}\mu_{\infty}(t) \\
   & \quad + \sum_{r=1}^{\infty} \int_{\Psi_{\beta}} \pi\left( H_{r}(\cdot, 
   \psi)\right) \left( I_{{\mathbb C}^{r}} \otimes (I - \pi(\psi) 
   \pi(\psi)^{*} ) \right) \pi\left( H_{r}(\cdot, \psi) \right)^{*} 
   \, {\tt d}\mu_{r}(\psi).
  \end{align*}
  From the fact that $\| \pi(\psi)\| \le 1$ for each $\psi \in \Psi$ 
  we read off from this last expression that $\| \pi(S_{0})\| \le 1$ 
  as well, i.e., (4) is verified.
  \end{proof}

 \begin{remark}  \label{R:history}{\em 
    In the proof of Theorem \ref{T:SchurAgler} we drew on a lot of 
    ideas which have been used in previous versions of this type of 
    result, starting with the seminal paper of Agler 
    \cite{Agler-Hellinger} and continuing with \cite{AMcC99, BT, AT, BTV, 
    EP, Tomerlin, BBQ, BGM, Ambrozie04, DMM07, DM07} as well as commutant 
    lifting versions \cite{BTV, BLTT, AE, McCS}.  In particular, the 
    cone separation argument in the proof of (1) $\Rightarrow$ (2) 
    and the proof of (2) $\Rightarrow$ (3) (the so-called 
    lurking-isometry argument) go back to \cite{Agler-Hellinger}. 
    However there are some new technical difficulties in the 
    test-function setting where some new ideas are required in order 
    to arrive at the final result; we now discuss some of these.
     
     In the proof of (1) 
     $\Rightarrow$ (2), the use of the $\epsilon^{2}$-perturbation 
     term in the definition of the $\cH_{{\mathbb L}_{1}, \epsilon}$ 
     norm is the ploy needed to make the point-evaluations $f \mapsto 
     f(w)$ bounded and enables us to avoid the hypothesis that the 
     set of test functions $\Psi$ separates the points of any finite 
     subset $\Omega_{F}$ of $\Omega$, as used in \cite{DMM07, DM07}.
     
     Our proof of (2) $\Rightarrow$ (1) (with the hypothesis that 
     $\operatorname{dim} \cY_{T} < \infty$) is close to the proof of 
     (3) $\Rightarrow$ (1) in \cite{DM07} (for the scalar-valued case)
     (which actually involves use 
     of the representation-theory formulation (4)).  These authors 
     make use of the spectral theorem for a representation of 
     $C_{b}(\Psi, {\mathbb C})$, approximating a general 
     representation $\rho$ by a ``simple representation'' 
     (approximation of the general integral in \eqref{detailed} by a 
     simple-function integrand).  Thus their proof also makes use of 
     the CCR character of $C_{b}(\Psi, {\mathbb C})$, and hence does 
     not appear to extend to the case $\operatorname{dim} \cY_{T} = 
     \infty$.
     }\end{remark}
 
 \section{Algebras arising from test functions}   \label{S:ex}
 
 In this section, rather than starting with a set of test functions 
 $\Psi$, we assume that we are given a function algebra $\cA$ and 
 then seek to determine a set of test functions $\Psi_{\cU, \cY}$ so 
 that the unit ball of the operator-valued version of $\cA$, say $\cA \otimes \cL(\cU, \cY)$ 
 where $\cU$, $\cY$ are two coefficient Hilbert spaces, can be 
 identified as the associated Schur-Agler class 
 $\mathcal{SA}_{\Psi_{\cU,\cY}}(\cU, \cY)$.  
 
 The classical example is the Hardy algebra over 
 the unit disk $\cA = H^{\infty}({\mathbb D})$. The operator-valued 
 version $\cA \otimes \cL(\cU, \cY)$ has unit ball equal to the 
 classical operator-valued Schur class $\cS(\cU, \cY)$, for which we 
 have the now  classical result:  $S \in \cS(\cU, \cY)$ if and only if the 
 associated de Branges-Rovnyak kernel $K_{S}(z,w) = [I - S(z) 
 S(w)^{*}]/(1 - z \overline{w})$ is a positive kernel on ${\mathbb 
 D}$.  If we let $K_{S}(z,w) = H(z) H(w)^{*}$ be the Kolmogorov 
 decomposition of $K_{S}$, then we arrive at
 $$
  I - S(z) S(w)^{*} = H(z) \left( (1 - z \overline{w}) I_{\cX}\right) 
  H(w)^{*}
 $$
 which is exactly the Agler decomposition \eqref{Aglerdecom} 
 corresponding to the singleton collection of test functions $\Psi = 
\{ \psi_{0}\}$ with $\psi_{0}$ equal to the coordinate function:  
$\psi_{0}(z) = z$.  For this case, moving from the scalar-valued case 
to the matrix- or operator-valued case necessitates no change in the 
choice of test-function set $\Psi$.  A similar  story holds for the 
case of the Schur-Agler class over the polydisk \cite{BT}, the 
Schur-multiplier class over the Drury-Arveson space \cite{BTV, EP}, and 
the Schur-Agler class over more general domains in ${\mathbb D}^{d}$ 
with matrix polynomial or analytic defining function \cite{BBQ, AE}.  
However the situation for the case where $\cA$ is the algebra of 
bounded analytic functions over a finitely connected planar domain 
$\cR$, or where $\cA$ is the constrained Hardy algebra over the unit 
disk (bounded holomorphic functions $f$ on ${\mathbb D}$ with the 
extra constraint that $f'(0) = 0$) is quite different.  We discuss 
each of these in turn.

 \subsection{The Schur class over a multiply connected planar 
 domain}   \label{S:cR}
 
 We let $\cR$ denote a bounded domain (connected, open set) in the 
 complex plane ${\mathbb C}$ whose boundary consists of $m+1$ smooth 
 Jordan curves $\partial_{0}$, $\partial_{1}$, $\dots$, $\partial_{m}$
 with $\partial_{0}$ denoting the boundary of the unbounded component 
 of the complement of $\cR$ in ${\mathbb C}$.  We let $\cS_{\cR}$ 
 denote the space of holomorphic functions mapping $\cR$ into the 
 unit disk, and $\cS_{\cR}(\cU, \cY)$ the operator-valued version 
 consisting of holomorphic functions on $\cR$ with values in the 
 closed unit ball $\overline{\cB} \cL(\cU, \cY)$ of bounded linear 
 operators between two coefficient Hilbert spaces $\cU$ and $\cY$.
 In \cite{DM05} there was identified a collection of inner functions
 $\{ s_{\bx} \colon \bx \in {\mathbb T}_{\cR}\}$,
 normalized to have value 1 at a fixed point $\zeta_{0} \in 
 \partial_{0}$ and to satisfy $s(t_{0}) = 0$ at a fixed point $t_{0} 
 \in \cR$, having exactly $m$ zeros in $\cR$ (the minimal 
 number possible for a single-valued inner function on $\cR$),
 and indexed by $\bx$ belonging to the $\cR$-torus ${\mathbb 
 T}_{\cR}: = \partial_{0} \times \partial_{1} \times \cdots \times 
 \partial_{m}$, so that any scalar Schur class function $s \in 
 \cS_{\cR}$ has an Agler decomposition \eqref{Aglerdecom} with 
 respect to the family $\Psi= \{ \psi_{\bx} \colon \bx \in {\mathbb 
 T}_{\cR}\}$ s in \eqref{scalarRAdecom} (or \eqref{detailed} 
 specialized to this case):
\begin{equation}   \label{scalarRAglerdecom} 1 - s(z) \overline{s(w)}
    = \int_{{\mathbb T}_{\cR}} h_{\bx}(z)\, \left(1 - 
 s_{\bx}(z) \overline{s_{\bx}(w)}\right) \, \overline{h_{\bx}(w)}\, {\tt 
 d}\nu(\bx).
 \end{equation}
 
 In more detail, the functions $s_{\bx}$ are constructed as follows.  
 Let $\bphi = \{ \phi_{1}, \dots, \phi_{m}\}$ be real-valued 
 continuous functions on $\partial \cR$ such that
 \begin{equation}  \label{phi}
     \{ \phi_{1}, \dots, \phi_{m}\} = \text{ basis for }
     L^{2}(\omega_{t_{0}}) \ominus [H^{2}(\omega_{t_{0}}) + 
     \overline{H^{2}(\omega_{t_{0}})}]
 \end{equation}
 where $\omega_{t_{0}}$ is the harmonic measure on $\partial \cR$ for some 
 fixed point $t_{0} \in \cR$ (so $h(t_{0}) = \int_{\partial \cR} 
 h(\zeta)\,{\tt d} \omega_{t_{0}}(\zeta)$ for $h$ harmonic on $\cR$ 
 and continuous on $\cR^{-}$), $H^{2}(\omega_{t_{0}})$ is the 
 associated Hardy space, and the overline indicates complex 
 conjugation---see e.g.~\cite{Fisher}.  Then given $\bx = (x_{0}, 
 x_{1}, \dots, x_{m}) \in {\mathbb T}_{\cR}$, there is a unique 
 choice of weights $w_{0}^{\bx}$, $w_{1}^{\bx}$, $\dots$, $w_{m}^{\bx}$, 
 each positive with sum equal to 1, so that
 \begin{equation}   \label{weights}
     \sum_{r-0}^{m} w_{r}^{\bx} \phi_{i}(x_{r}) = 0 \text{ for } i = 
     1, \dots, m
 \end{equation}
 (see \cite[Theorem 3.1.17]{AHR}).  Given any $\bx$ and the 
 associated weights $(w_{0}^{\bx,}  w_{1}^{\bx}, \dots, w_{m}^{\bx})$ 
 we associate the probability measure on $\partial \cR$:
 $$
 \mu_{\bx} : = \sum_{r=0}^{m} w_{r}^{\bx} \delta_{x_{r}}
 $$
 where $\delta_{x_{r}}$ is the unit point-mass measure at $x_{r}$.  
 The constraint \eqref{weights} guarantees that the harmonic function 
 $$
     h_{\bx}(z) = \int_{\partial \cR} \cP_{z}(\zeta)\, {\tt 
     d}\mu_{\bx}(\zeta)
 $$
 (where $\cP_{z}(\zeta)$ is the poisson kernel normalized to have 
 $\cP_{t_{0}}(\zeta) = 1$) has single-valued harmonic conjugate.  We 
 then define $f_{\bx}(z)$ to be the unique holomorphic function on 
 $\cR$ with
 $$
     \R f_{\bx}(z) = h_{\bx}(z) \text{ and } f_{\bx}(t_{0}) = 1.
 $$
 Finally we set
 \begin{equation}  \label{sbx}
     s_{\bx}(z) = \frac{f_{\bx}(z) - 1}{f_{\bx}(z) + 1}.
 \end{equation}
 Then $s_{\bx}$ are the inner functions appearing in 
 \eqref{scalarRAglerdecom}, apart from the additional normalization 
 that $s_{\bx}(\zeta_{0}) = 1$ at a fixed $\zeta_{0} \in 
 \partial_{0}$.  Then it is shown in \cite{DM07} that $\cS_{\cR} = 
 \mathcal{SA}_{\Psi_{\cR}}$ with the collection of test functions 
 $\Psi_{\cR}$ taken to be $\Psi_{\cR} = \{s_{\bx} \colon \bx \in {\mathbb T}_{\cR}\}$.
 There it is shown, at least for the annulus case ($m=1$), that, with 
 the additional normalization $s_{\bx}(\zeta_{0}) = 1$ imposed, that 
 $\Psi_{\cR}$ is minimal in the sense that no nonempty open subset of 
 ${\mathbb T}_{\cR}$ can be omitted and still have the decomposition 
 \eqref{scalarRAglerdecom} hold  for all $s \in \cS_{\cR}$.
 
 Before explaining the matrix generalization of \eqref{sbx}, we first recall 
 some ideas from \cite{BG}.  Suppose that we are given a collection
 $$
 \bphi = \left\{ \phi^{(1)} = \begin{bmatrix} \phi^{(1)}_{1} \\ 
 \vdots \\ \phi^{(1)}_{m} \end{bmatrix}, \dots, \phi^{(n)} = 
 \begin{bmatrix} \phi^{(n)}_{1} \\ \vdots \\ \phi^{(n)}_{m} 
 \end{bmatrix} \right\}
 $$
 of $n$ vectors in ${\mathbb R}^{m}$.  From $\bphi$ we form the block 
 column vectors
 $$
 \bphi \otimes I_{N} = \left\{ \phi^{(1)} \otimes I_{N}: = 
 \begin{bmatrix} \phi^{(1)}_{1} I_{N} \\ \vdots \\ \phi^{(1)}_{m} 
     I_{N} \end{bmatrix}, \dots, \phi^{(n)} \otimes I_{N} : = 
     \begin{bmatrix} \phi^{(n)}_{1} I_{N} \\ \vdots \\ \phi^{(n)}_{m} 
	 I_{N} \end{bmatrix} \right\}
$$
in $\left( {\mathbb C}^{N \times N}\right)^{m}$ ($m \times 1$-column 
vectors with entries of size $N \times N$).  We then say that the 
zero element ${\mathbf 0} = \left[ \begin{smallmatrix} 0_{N \times N} 
\\ \vdots \\ 0_{N \times N} \end{smallmatrix} \right]$ of 
$\left({\mathbb C}^{N \times N}\right)^{m}$ is {\em in the 
$C^{*}$-convex hull of $\bphi \otimes I_{N}$} if there exist positive 
semidefinite $N \times N$ matrices $W_{1}, \dots, W_{n}$ with 
$\sum_{r=1}^{n} W_{r} = I_{N}$ so that 
\begin{equation}   \label{0in}
    {\mathbf 0} = \sum_{r=1}^{n} \phi^{(r)} \otimes W_{r}
\end{equation}
where we set $\phi^{(r)} \otimes W_{r} =  \left[ \begin{smallmatrix} \phi^{(r)}_{1} 
W_{r} \\ \vdots \\ \phi^{(r)}_{m} W_{r} \end{smallmatrix} \right]$. 
We say that ${\mathbf 0}$ is {\em in the interior of the 
$C^{*}$-convex hull of $\bphi \otimes I_{N}$} if in addition the 
matrix weights $\{W_{1}, \dots, W_{n}\}$ have the property 
that their range spaces $\{ \operatorname{Ran} W_{1}, \dots, 
\operatorname{Ran} W_{n}\}$ are {\em $\bphi$-constrained weakly independent} by which 
we mean:  {\em whenever $T_{1}, \dots, 
  T_{n}$ are $N \times N$ complex Hermitian matrices with 
  $\operatorname{Ran} T_{r} \subset \operatorname{Ran} W_{r}$ for 
  each $r = 1, \dots, n$ such that
  $$
  \sum_{r=1}^{n} T_{r} = 0 \text{ and } \sum_{r=1}^{n} 
  \phi_{i}(x_{r}) T_{r} = 0 \text{ for } i=1, \dots, n,
  $$
  it follows that $T_{r} = 0$ for each $r = 1, \dots, n$.}
  When all this happens, we refer to $\{W_{1}, \dots, W_{n}\}$ as 
  {\em a choice of matrix barycentric coordinates of ${\mathbf 0}$ 
  with respect to $\bphi$.}  
  
  By way of motivation for these notions, note that, in case $N=1$ 
  and all the weights $W_{1} = w_{1}$, $\dots$, $W_{n} = w_{n}$ (now 
  complex numbers) are nonzero (which can be arranged simply by 
  discarding appropriate vectors $\phi^{(r)}$ from the list of 
  vectors $\bphi$), then ${\mathbf 0} = 0 \in {\mathbb R}^{m}$ in the 
  interior of the $C^{*}$-convex hull of $\bphi \otimes I_{1} = \bphi$ simply means 
  that the vector $0 \in {\mathbb R}^{m}$ is in the interior of the 
  simplex generated by the vectors $\phi^{(1)}, \dots, \phi^{(n)}$ 
  and that $w_{1}, \dots, w_{n}$ are the classical barycentric 
  coordinates for $0$ with respect to the simplex vertices 
  $\phi^{(1)}, \dots, \phi^{(m)}$.
  
  We are now ready to explain the matrix analogue of the $\cR$-torus 
  ${\mathbb T}_{\cR}$ used to parametrize the set of scalar test 
  functions \eqref{sbx}.  We define the matrix $\cR$-torus
 ${\mathbb T}_{\cR}^{N}$ to 
   consist of all pairs $(\bx, \bw)$ of the form $(\bx, \bw) = (x_{1}, \dots, x_{n}; W_{1}, 
   \dots, W_{n})$ where $x_{1}, \dots, x_{n}$ is a set of $n$ 
   distinct points in $\partial \cR$ such that ${\mathbf 0}$ is in 
   the interior of the $C^{*}$-convex hull of the set of vectors
   $\bphi(\bx) \otimes I_{N}$, where we set
   \begin{equation}   \label{bphix}
   \bphi(\bx) = \left\{ \phi(x_{1}) = \begin{bmatrix} \phi_{1}(x_{1}) 
   \\ \vdots \\ \phi_{m}(x_{1}) \end{bmatrix}, \dots, \phi(x_{n}) = \begin{bmatrix} 
   \phi_{1}(x_{n}) \\ \vdots \\ \phi_{m}(x_{n}) \end{bmatrix} 
   \right\},
   \end{equation}
   with $\phi_{1}, \dots, \phi_{m}$ as in \eqref{phi}, and with $\{W_{1}, 
   \dots, W_{n}\}$ is a choice of matrix barycentric coordinates for 
   ${\mathbf 0}$ with respect to $\bphi(\bx) \otimes I_{N}$.  In 
   particular, the condition \eqref{0in} in the present context 
   specializes to
   \begin{equation}   \label{matrixweights}
       \sum_{r=1}^{n} \phi_{i}(x_{r}) W_{r} = 0 \text{ for } i = 1, 
       \dots, m.
   \end{equation}

   For the case $N=1$, necessarily 
   $n=m+1$, after a reindexing the collection of points $(x_{0}, 
   x_{1}, \dots, x_{m})$ necessarily consists of exactly one point 
   from each boundary component $\partial_{0}, \dots, \partial_{m}$, 
   and the associated scalar weights $w_{0}^{\bx}, w_{1}^{\bx}, 
   \dots, w_{m}^{\bx}$ are uniquely determined by $\bx$.  For $N>1$, 
   the characterization of ${\mathbb T}^{N}_{\cR}$ is not so 
   explicit; nevertheless it is nonempty and is a well-defined 
   metrizable topological space which is in one-to-one correspondence 
   with a collection of quantum measures (positive matrix measures  
   with total mass equal to the identity matrix $I_{N}$) which we define next.
   For additional information we refer to \cite{BG}.
   
   Given $(\bx, \bw) \in {\mathbb T}^{N}_{\cR}$, we associate a 
   quantum measure $\mu_{\bx, \bw}$ by
   \begin{equation}  \label{matrixmeasure}
       \mu_{\bx, \bw} = \sum_{r=1}^{n} W_{r} \delta_{x_{r}} \text{ if }
       (\bx, \bw) = (x_{1}, \dots, x_{n}; \, W_{1}, \dots, W_{n}) \in 
       {\mathbb T}^{N}_{\cR}.
       \end{equation}
    Then a consequence of \eqref{matrixweights} is that the 
    matrix-valued harmonic function
    $$ H_{\bx, \bw}(z) = \int_{\partial \cR} \cP_{z}(\zeta) \, {\tt 
    d}\mu_{\bx, \bw}(\zeta)
    $$
    has a single-valued (matrix-valued) harmonic conjugate, and hence 
    there is a unique\-ly determined holomorphic function $F_{\bx, \bw}$ 
    on $\cR$ with
    $$
    \R F_{\bx, \bw}(z) = H_{\bx, \bw}(z) \text{ and } 
    F_{\bx,\bw}(t_{0}) = I_{N}.
  $$
  It can be shown that the collection of functions
  \begin{equation}  \label{HerglotzRextreme}
  \{ F_{\bx, \bw} \colon (\bx, \bw) \in {\mathbb T}^{N}_{\cR}\}
  \end{equation}
  is exactly the set of extreme points for the compact convex set 
  $\cH^{N}(\cR)_{I}$ of 
  normalized Herglotz functions  over $\cR$ given by
  $$
    \cH^{N}(\cR)_{I} = \{ F\colon \cR \mapsto {\mathbb C}^{N \times N} 
    \colon F \text{ holomorphic, } \R F(z) \ge 0 \text{ for } z \in 
    \cR, \, F(t_{0}) = I_{N} \}.
    $$
  Finally, we set
  \begin{equation}   \label{Sbxbw}
      S_{\bx, \bw}(z) = (F_{\bx,\bw}(z) + I)^{-1} (F_{\bx, \bw}(z) -I).
 \end{equation}
 Note that each $S_{\bx, \bw}(z)$ is an $N \times N$ matrix inner 
 function on $\cR$ normalized to satisfy $S(t_{0}) = 0$.  Then in 
 \cite{BG} it is shown that any matrix-valued function $S$ in the 
 Schur class $\cS_{\cR}({\mathbb C}^{N}, {\mathbb C}^{N})$ has an 
 Agler decomposition of the form
 \begin{equation}   \label{matrixRAglerdecom}
     I - S(z) S(w)^{*} = \int_{{\mathbb T}^{N}_{\cR}} H_{\bx, \bw}(z) 
     \left( I - S_{\bx, \bw}(z) S_{\bx, \bw}(w)^{*} \right) H_{\bx, 
     \bw}(w)^{*} \, {\tt d}\nu(\bx, \bw)
  \end{equation}
  for appropriate matrix functions $H_{\bx, \bw}(z)$ and probability 
  measure $\nu$ on ${\mathbb T}^{N}_{\cR}$.
  
  Following the arguments in \cite{DM07} (adapted to the matrix-valued 
  setting) leads to the following identification of the matrix Schur 
  class $\cS_{\cR}({\mathbb C}^{N})$ with a 
  matrix-valued test-function Schur class $\mathcal{SA}_{\Psi^{N}_{\cR}}$; 
  the main ingredients of the proof also appear in the more involved 
  proof of Theorem \ref{T:annulustestfunc} below.

  \begin{theorem}  \label{T:matrixSchurR}
      Let $\Psi^{N}_{\cR}$ be the collection of matrix inner functions
   \begin{equation}   \label{PsiN}
       \Psi^{N}_{\cR} = \{ S_{\bx, \bw} \colon (\bx, \bw) \in {\mathbb 
       T}^{N}_{\cR} \}
   \end{equation}
   with $S_{\bx, \bw}$ as in \eqref{Sbxbw}, with the additional 
   normalization $S_{\bx, \bw}(\zeta_{0}) = I_{N}$ at some fixed point 
   $\zeta_{0} \in \partial_{0}$.  Then the matrix-valued Schur class 
   $\cS_{\cR}({\mathbb C}^{N})$ is identical to the 
   matrix-valued test-function Schur-Agler class 
   $\mathcal{SA}_{\Psi^{N}_{\cR}}$
   associated with the collection of test functions $\Psi^{N}_{\cR}$ (as 
   defined by \eqref{kXpsiK} and \eqref{kYSK}).
   \end{theorem}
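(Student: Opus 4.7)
The plan is to prove the two set inclusions separately, using Theorem \ref{T:SchurAgler} as the main bridge between the intrinsic Schur class $\cS_{\cR}({\mathbb C}^{N})$ and the test-function Schur-Agler class $\mathcal{SA}_{\Psi^{N}_{\cR}}$, together with the matrix Agler decomposition \eqref{matrixRAglerdecom} established in \cite{BG}.

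For the reverse inclusion $\mathcal{SA}_{\Psi^{N}_{\cR}} \subset \cS_{\cR}({\mathbb C}^{N})$, I would apply the implication (1)$\Rightarrow$(3) of Theorem \ref{T:SchurAgler} to an arbitrary $S \in \mathcal{SA}_{\Psi^{N}_{\cR}}$ to produce a transfer-function realization \eqref{Srealization}. Since each test function $S_{\bx,\bw}$ is holomorphic on $\cR$, the map $z \mapsto L^{*}_{{\mathbb E}(z)^{*}}$ is operator-holomorphic, and hence so is $S$. Specializing the Agler decomposition \eqref{Aglerdecom} from (1)$\Rightarrow$(2) to the diagonal $z=w$ and using complete positivity of $\Gamma(z,z)$ together with the test axiom \eqref{test-axiom} gives $I - S(z) S(z)^{*} \ge 0$; thus $S \in \cS_{\cR}({\mathbb C}^{N})$.

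For the forward inclusion $\cS_{\cR}({\mathbb C}^{N}) \subset \mathcal{SA}_{\Psi^{N}_{\cR}}$, I would take $S \in \cS_{\cR}({\mathbb C}^{N})$ and appeal to the decomposition \eqref{matrixRAglerdecom} of \cite{BG}. This representation is precisely the ``detailed'' integral form \eqref{detailed} that serves as input to the implication (2)$\Rightarrow$(1) of Theorem \ref{T:SchurAgler}, with the inner functions $\{S_{\bx,\bw}\}$ supplying the test functions and the outer factors $H_{\bx,\bw}(z)$ providing a Kolmogorov square root of the underlying completely positive kernel. Since $\cY_{T} = {\mathbb C}^{N}$ is finite-dimensional, the hypothesis $\operatorname{dim} \cY_{T} < \infty$ of Theorem \ref{T:SchurAgler} is met and we conclude $S \in \mathcal{SA}_{\Psi^{N}_{\cR}}$. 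The extra normalization $S_{\bx,\bw}(\zeta_{0}) = I_{N}$ imposed in the definition \eqref{PsiN} is compatible with the decomposition, since multiplying $S_{\bx,\bw}$ on the left by the constant unitary $S_{\bx,\bw}(\zeta_{0})^{*}$ does not change the quantity $I - S_{\bx,\bw}(z) S_{\bx,\bw}(w)^{*}$ and can be absorbed into $H_{\bx,\bw}(z)$.

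The main obstacle I anticipate is the verification of the test axiom \eqref{test-axiom}, namely $\sup \{\|S_{\bx,\bw}(z)\| \colon (\bx,\bw) \in {\mathbb T}_{\cR}^{N}\} < 1$ for each fixed $z \in \cR$. Strict contractivity of each individual $S_{\bx,\bw}$ at interior points follows from $S_{\bx,\bw}(t_{0}) = 0$ together with a matrix Schwarz-type argument, but uniformity over $(\bx,\bw)$ requires a compactness argument based on the topology on ${\mathbb T}_{\cR}^{N}$ inherited from its identification with the set of extreme points of the compact convex set $\cH^{N}(\cR)_{I}$ of normalized Herglotz functions. A secondary concern is measurability of $(\bx,\bw) \mapsto H_{\bx,\bw}(z)$, which is needed in order to interpret the integral decomposition of \cite{BG} as a bona fide instance of \eqref{detailed}; this should follow from the disintegration argument used in \cite{BG} itself, and once both points are settled, the theorem reduces to an application of Theorem \ref{T:SchurAgler} in each of the two directions.
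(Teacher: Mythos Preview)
Your proposal is correct and follows the paper's strategy closely: the forward inclusion via the Agler decomposition \eqref{matrixRAglerdecom} feeding into (2)$\Rightarrow$(1) of Theorem \ref{T:SchurAgler} is exactly what the paper has in mind when it refers to the arguments of \cite{DM07} adapted to the matrix-valued setting.

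For the reverse inclusion, however, the paper (as illustrated in the proof of Theorem \ref{T:annulustestfunc}, which it says contains the main ingredients) takes a shorter route than your transfer-function argument.  One simply observes that the Szeg\H{o}/Fay kernel for the Hardy space $H^{2}(\omega_{t_{0}}) \otimes {\mathbb C}^{N}$ over $\cR$ is a $\Psi^{N}_{\cR}$-admissible kernel, because each test function $S_{\bx,\bw}$ is already in $\cS_{\cR}({\mathbb C}^{N})$ and hence a contractive multiplier on that Hardy space.  Membership of $S$ in $\mathcal{SA}_{\Psi^{N}_{\cR}}$ then forces $R_{S}$ to be contractive on this particular reproducing kernel space, which directly gives $S \in \cS_{\cR}({\mathbb C}^{N})$ (holomorphicity and contractivity at once).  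This sidesteps your concern about operator-holomorphicity of $z \mapsto L^{*}_{{\mathbb E}(z)^{*}}$, and is the approach you should adopt.

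One small slip: to achieve the normalization $S_{\bx,\bw}(\zeta_{0}) = I_{N}$ you must multiply $S_{\bx,\bw}$ on the \emph{right} by the constant unitary $S_{\bx,\bw}(\zeta_{0})^{*}$, not on the left.  Right multiplication by a constant unitary leaves $I - S_{\bx,\bw}(z)S_{\bx,\bw}(w)^{*}$ unchanged, so nothing needs to be absorbed into $H_{\bx,\bw}$ at all; left multiplication would not have this property.
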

   
   Combined with Theorem \ref{T:SchurAgler} and Remark  
   \ref{R:interpolation}, we arrive at the following dual 
   formulations of interpolation criteria for the Nevanlinna-Pick 
   interpolation problem for the matrix Schur class over $\cR$.
   Before stating the result we need a little more background 
   concerning function theory on $\cR$.   There is a standard 
   procedure (see e.g.\cite{Abrahamse}) for introducing $m$ disjoint 
   simple curves $\gamma_{1}, \dots, \gamma_{m}$ so that $\cR 
   \setminus {\boldsymbol \gamma}$ (where we set ${\boldsymbol 
   \gamma}$ equal to the union ${\boldsymbol \gamma} = \gamma_{1} 
   \cup  \cdots \cup \gamma_{m}$) is simply connected.  For each cut 
   $\gamma_{r}$ we assign some orientation, so that points $z$ not on 
   $\gamma_{r}$ but in a sufficiently small neighborhood of 
   $\gamma_{r}$ in $\cR$ can be assigned a location of either ``to 
   the left'' or ``to the right''. For $f$ a vector-valued function on 
   $\cR$ and $z$ a point on some $\gamma_{r}$, we let $f(z_{+})$ 
   denote the limit of $f(\zeta)$ as $\zeta$ approaches $z$ from the
   right of $\gamma_{r}$ in $\cR$, and similarly, $f(z_{-})$ the limit 
   of $f(\zeta)$ as $\zeta$ approaches $z$ from the
   left of $\gamma_{r}$ in $\cR$, whenever these 
   limits exist.  Given a $\bU = (U_{1}, \dots, U_{m})$ in 
   $\cU(N)^{m}$ ($m$-tuples of unitary $N \times N$ matrices), we define a Hardy space 
   $H^{2}(\bU)$ to consist of functions $f \colon \cR \to {\mathbb 
   C}^{N}$, holomorphic on $\cR \setminus {\boldsymbol \gamma}$, 
   subject to the jump conditions $f(z_{-}) = U_{r} f(z_{+})$ for $z 
   \in \gamma_{r}$ for each $r = 1, \dots, m$ (so $\|f(z)\|^{2}$ is 
   continuous and single-valued on $\cR$), and so that the 
   well-defined integral 
   $$
   \|f\|^{2}_{H^{2}(\bU)}  = \int_{\partial \cR} \| f(\zeta) \|^{2} \, 
   {\tt d}\omega_{t_{0}}
   $$
   is finite.  Then the space $H^{2}(\bU)$ is a reproducing kernel 
   Hilbert space over $\cR$ (with some appropriate convention as to how elements 
   are defined on ${\boldsymbol \gamma}$); we denote its ${\mathbb 
   C}^{N \times N}$-valued reproducing kernel  function by
   $K^{\bU}$:   $H^{2}(\bU) = \cH(K^{\bU})$.  These kernels  enter 
   into the admissible-kernel formulation of the criterion for the 
   $\cS_{\cR}({\mathbb C}^{N})$-interpolation problem to have a 
   solution.
   
   \begin{theorem} \label{T:Rinterpolation}
       Suppose that we are given an $N \times N$ 
       matrix-valued function $S_{0}$ on the subset $\cR_{0}$ of 
       $\cR$.  Then the following are equivalent:
       \begin{enumerate}
	   \item There is a function $S$ in the Schur class 
	   $\cS_{\cR}({\mathbb C}^{N})$ with $S|_{\cR_{0}} = S_{0}$.
	   
	   \item There is a matrix-valued function $((\bx, \bw), z) 
	   \mapsto H_{\bx,\bw}(z)$ on ${\mathbb T}^{N}_{\cR} \times 
	   \cR_{0}$, bound\-ed and measurable in $(\bx, \bw)$ for each $z 
	   \in \cR_{0}$, together with a finite measure $\nu$ on 
	   ${\mathbb T}^{N}_{\cR}$ so that
	   $$
	   I - S_{0}(z) S_{0}(w)^{*} = \int_{{\mathbb T}^{N}_{\cR}} 
	   H_{\bx, \bw}(z) \left( I_{\cX_{\bx, \bw}} \otimes (I - 
	   S_{\bx, \bw}(z) S_{\bx, \bw}(w)^{*} \right) H_{\bx, 
	   \bw}(w)^{*} \, {\tt d}\nu(\bx, \bw)
	   $$
	   for all $z,w \in \cR_{0}$.
	   
	   \item For each  $\bU = (U_{1}, \dots, U_{m})$ in 
	   $\cU(N)^{m}$ and for each $Y \colon 
	   \cR_{0} \to {\mathbb C}^{N \times N}$, the kernel
\begin{equation}  \label{R-Pick}
k(z,w): = \operatorname{tr}\left( Y(w)^{*}( I - S_{0}(w)^{*} S_{0}(z) 
)Y(z) ) K^{\bU}(z,w) \right)
\end{equation}
is a positive kernel on $\cR_{0}$.
\end{enumerate}
   \end{theorem}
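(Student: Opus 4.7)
The strategy is to combine Theorem \ref{T:matrixSchurR}, which identifies $\cS_\cR({\mathbb C}^N) = \mathcal{SA}_{\Psi^N_\cR}({\mathbb C}^N)$, with the general interpolation apparatus in Theorem \ref{T:SchurAgler} and Remark \ref{R:interpolation}. Because the test-function coefficient spaces $\cU_T = \cY_T = {\mathbb C}^N$ are finite-dimensional, the full force of Theorem \ref{T:SchurAgler} is available (including $(2) \Rightarrow (1)$), and so is the dual-pairing criterion from Remark \ref{R:interpolation}.

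For $(1) \Leftrightarrow (2)$, I would appeal to the equivalence $(1) \Leftrightarrow (2)$ of Theorem \ref{T:SchurAgler} for the class $\mathcal{SA}_{\Psi^N_\cR}$ and rewrite the Agler decomposition in the explicit integral form \eqref{detailed} obtained inside the proof of $(2) \Rightarrow (1)$ there. That expansion is legitimate because $C_b(\Psi^N_\cR, \cL({\mathbb C}^N))$ is CCR, so every $*$-representation decomposes as a direct integral of inflations of point-evaluations at points of the Stone-\v{C}ech compactification. The multiplicity spaces ${\mathbb C}^r$ and $\ell^2$ appearing in \eqref{detailed} can be absorbed into a single integrand $\cX_{\bx,\bw}$ indexed by $(\bx,\bw) \in {\mathbb T}^N_\cR$, yielding exactly the integral representation of $(2)$ with $\psi = S_{\bx,\bw}$.

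For $(1) \Rightarrow (3)$ I would first check that each Hardy-bundle kernel $K^\bU$ is $\Psi^N_\cR$-admissible. Elements of $\cH(K^\bU)_{{\mathbb C}^N}$ can be realized as $N \times N$ matrix-valued functions $F$ on $\cR$ satisfying the left jump condition $F(z_-) = U_r F(z_+)$ on $\gamma_r$. The map $F \mapsto F S_{\bx,\bw}$ preserves this jump condition (the left action of $U_r$ commutes with right multiplication by the single-valued $S_{\bx,\bw}$) and preserves boundary $L^2$-norms because $S_{\bx,\bw}$ is unitary-valued on $\partial\cR$; hence $R_{S_{\bx,\bw}}$ is in fact isometric on $\cH(K^\bU)_{{\mathbb C}^N}$. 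The same computation shows that if $S \in \cS_\cR({\mathbb C}^N)$ extends $S_0$, then $R_S$ is contractive on each $\cH(K^\bU)_{{\mathbb C}^N}$ (using $S(\zeta) S(\zeta)^* \le I$ on $\partial \cR$), and Theorem \ref{T:RSbounded} translates this into positivity of the kernel \eqref{R-Pick}.

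For $(3) \Rightarrow (1)$ I would apply the generating-set criterion of Remark \ref{R:interpolation} with $\cK^0_{\Psi^N_\cR}({\mathbb C}^N) := \{K^\bU \colon \bU \in \cU(N)^m\}$. The main obstacle is verifying that this collection is indeed a generating set for $\cK_{\Psi^N_\cR}({\mathbb C}^N)$, i.e., that every $\Psi^N_\cR$-admissible $\cL({\mathbb C}^N)$-valued kernel $K$ admits a congruence $K(z,w) = Y(z) K^\bU(z,w) Y(w)^*$ for some $\bU$ and operator-valued $Y$. This is the matrix analogue of Abrahamse's classification of admissible kernels on $\cR$, and in the scalar case is implicit in the identification $\cS_\cR = \mathcal{SA}_{\Psi_\cR}$ of \cite{DM07}. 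The argument I would attempt is to take a Kolmogorov decomposition $K(z,w) = H(z) H(w)^*$ of an arbitrary admissible $K$, use admissibility to manufacture a $*$-representation of $C_b(\Psi^N_\cR, \cL({\mathbb C}^N))$ on the underlying Hilbert space, and then decompose this representation into its irreducible components; each point-evaluation component contributes a Hardy-bundle summand $K^\bU$ with $\bU$ read off from the characters of the test functions $S_{\bx,\bw}$ at the corresponding point of the Stone-\v{C}ech compactification. Once the generating property is in hand, Remark \ref{R:interpolation} converts $(3)$ into the statement $S_0 \in \mathcal{SA}_{\Psi^N_\cR|\cR_0}({\mathbb C}^N)$, and $(2) \Rightarrow (1)$ of Theorem \ref{T:SchurAgler} delivers the desired extension in $\cS_\cR({\mathbb C}^N)$.
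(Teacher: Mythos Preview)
Your treatment of $(1)\Leftrightarrow(2)$ matches the paper: invoke Theorem~\ref{T:matrixSchurR} to identify $\cS_\cR({\mathbb C}^N)$ with $\mathcal{SA}_{\Psi^N_\cR}$, then apply Theorem~\ref{T:SchurAgler} and unpack the Agler decomposition in the explicit integral form \eqref{detailed}.

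For $(1)\Leftrightarrow(3)$, however, the paper does \emph{not} follow your route. You propose to verify directly that $\{K^{\bU}:\bU\in\cU(N)^m\}$ is a generating set for $\cK_{\Psi^N_\cR}({\mathbb C}^N)$ and then invoke Remark~\ref{R:interpolation}. The paper explicitly declines to do this: it instead cites the interpolation criterion of \cite[Theorem~1.5]{BBtH}, which is proved via the lifting theorem of \cite{Ball79} (a dual-factorization argument, not the test-function machinery), and then observes that the criterion there can be rewritten in the form \eqref{R-Pick} by switching from left to right multiplication operators. In fact the paper later flags a direct proof of the generating-set property as an open problem (see Remark~\ref{R:1reduction}): ``it would be interesting to see a direct proof that $(\Psi^{N}_{\cR})^{0}$ \dots\ generates $\cK_{\Psi^{N}_{\cR}}$.''

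Your sketch for the generating-set property also has a structural gap. You propose to take a Kolmogorov decomposition $K(z,w)=H(z)H(w)^*$ of an arbitrary admissible kernel and ``use admissibility to manufacture a $*$-representation of $C_b(\Psi^N_\cR,\cL({\mathbb C}^N))$'' on the underlying space. But admissibility only says each $R_{S_{\bx,\bw}}$ is a contraction on $\cH(K)_{{\mathbb C}^N}$; it does not by itself produce a $*$-representation of the full $C^*$-algebra. Such a representation is exactly what an Agler decomposition supplies (via condition $(3')$ in Theorem~\ref{T:cpker}), so your argument is circular as stated. This is why the paper routes through the independently established result in \cite{BBtH} rather than attempting the generating-set verification here.
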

   
   \begin{proof}  The equivalence of (1) and (2) is a consequence of 
       Theorem \ref{T:SchurAgler}, once the result of Theorem 
       \ref{T:matrixSchurR} is plugged in.  
       
       The equivalence of (1) and (3) is a consequence of Remark 
       \ref{R:interpolation}, once it is verified that the set 
       \begin{equation}   \label{KU}
   (\Psi^{N}_{\cR})^{0}: = \{ K^{\bU} \colon \bU \in \cU(N)^{m}  \}
   \end{equation}
   is a generating set for the set of admissible kernels 
       $\cK_{\Psi^{N}_{\cR}}({\mathbb C}^{N})$.  Rather than doing 
       this, we observe that a solution criterion  for the 
       $\mathcal{SA}_{\cR}({\mathbb C}^{N})$-interpolation problem 
       was obtained in \cite[Theorem 1.5]{BBtH} (as a consequence of the lifting 
       theorem from \cite{Ball79}), but in a somewhat different, more 
       convoluted form than the form \eqref{R-Pick}.  If one works with right multiplication   
       operators on the space $\cH(K^{U})_{{\mathbb C}^{N}}$ 
       rather than with left multiplication operators on a left-side 
       tensoring of the reproducing kernel Hilbert space consisting 
       of row-vector functions as is done in \cite{BBtH}, one arrives 
       at the solution criterion \eqref{R-Pick} as presented here.
      \end{proof}
      
      \begin{remark}  \label{R:Rinterpolation}  {\em  We note that the 
	  scalar-valued case $N=1$ 
	  of criterion (3) in Theorem \ref{T:Rinterpolation} is due 
	  to Abrahamse \cite{Abrahamse}---note that the extra 
	  parameter $Y(z)$ washes out in this case.  It was later shown by 
	  Ball-Clancey \cite{BC} that no open subset of the kernels $K^{U}$ ($U \in 
	  \cU(1)^{m}$) can be omitted for the validity of this 
	  result.  However, for the case of the annulus, if one 
	  takes the set of interpolation nodes $\cR_{0}$ to be finite 
	  and prespecified, then two kernels suffice \cite{VF}. 
	  While the Abrahamse result extends to the matrix-valued 
	  setting for the annulus case (using only scalar-valued 
	  kernels), McCullough and Paulsen  \cite{McC01, McCP}, using 
	  the $C^{*}$-algebra approach to interpolation theory, 
	  showed that the Fedorov-Vinnikov result fails for the 
	  matrix-valued case.  All this story is reviewed nicely in 
	  \cite{DPRS}.  We do not address such minimality issues here.
	  }\end{remark}
   
   For the case of the annulus ($m=1$), by using  results of 
   McCullough \cite{McC95} it is possible to obtain a more explicit 
   test-function collection as follows.  We take $\cR$ to have the 
   concrete form $\cR = {\mathbb A}_{q}$ where
   $$
   {\mathbb A}_{q} = \{ z \in {\mathbb C} \colon q < |z| < 1\}
   $$
   for a number $q$ satisfying $0 < q < 1$.  It is established in 
   \cite{McC95} that there is a curve $t \mapsto \varphi_{t}$ of 
   inner functions on ${\mathbb A}_{q}$ (constructed from the Ahlfors 
   function for ${\mathbb A}_{q}$ based at the point $\sqrt{q} \in 
   {\mathbb A}_{q}$) with the following property:    for a 
   $(U,t) \in \cU(N) \times {\mathbb T}^{n}$ (where $\cU(n)$ denotes 
   the set of $N \times N$ unitary matrices and ${\mathbb T}^{n}$ is 
   the $N$-torus $\{t = (t_{1}, \dots, t_{n}) \colon |t_{j}| =1 
   \text{ for } 1 \le j \le N\}$),
   set 
   $$
   \Phi_{U,t}(z) = U \begin{bmatrix} \varphi_{t_{1}}(z) & & \\ & \ddots & 
   \\ & & \varphi_{t_{N}}(z) \end{bmatrix} 
   $$
   and
   $$
   R_{U,t}(z) = (I_{N} +  \Phi_{U,t}(z)) (I - \Phi_{U,t}(z))^{-1};
   $$
   {\em then, 
   for each $(\bx, \bw) \in {\mathbb T}^{N}_{{\mathbb A}_{q}}$ there 
   is a choice of invertible $N \times N$ matrix $X$ and a $(U,t) 
   \in \cU(N) \times {\mathbb T}^{N}$ so that
   \begin{equation} \label{McCresult}
       F_{\bx, \bw}(z) +  F_{\bx, \bw}(z)^{*}  = X \left( R_{U,t}(z)  + 
       R_{U,t}(z)^{*} \right) X^{*} \text{ for all } z \in {\mathbb 
       A}_{q}.
   \end{equation}
   } We are now ready to introduce a new test-function class for 
   $\cS_{{\mathbb A}_{q}}^{N}$, namely:
   \begin{equation}   \label{tildePhiN}
   \widetilde \Psi^{N}_{{\mathbb A}_{q}} = \{  \Phi_{U,t} \colon (U, t) \in \cU(N) 
   \times {\mathbb T}^{N} \}.
   \end{equation}
   We then have the following result.
   
   \begin{theorem}  \label{T:annulustestfunc}
       The matrix-valued Schur class over the annulus $\cS_{{\mathbb 
       A}_{q}}({\mathbb C}^{N})$ is identical to the 
       matrix-valued test-function Schur-Agler class 
       $\mathcal{SA}_{\widetilde \Psi^{N}_{{\mathbb A}_{q}}}$ where $\widetilde 
       \Psi^{N}_{{\mathbb A}_{q}}$ is given by \eqref{tildePhiN}.
   \end{theorem}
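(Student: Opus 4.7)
The strategy is to prove both inclusions in the asserted identity.

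For the easy inclusion $\mathcal{SA}_{\widetilde\Psi^N_{{\mathbb A}_q}} \subseteq \cS_{{\mathbb A}_q}({\mathbb C}^N)$, suppose $S$ lies in the left-hand side. Since $\dim \cY_T = N < \infty$, Theorem \ref{T:SchurAgler} provides a $\widetilde\Psi^N_{{\mathbb A}_q}$-unitary transfer-function realization \eqref{Srealization} (which exhibits $S$ as holomorphic on ${\mathbb A}_q$, since each $\Phi_{U,t}$ is holomorphic in $z$) together with an Agler decomposition \eqref{Aglerdecom}. Setting $z=w$ in the decomposition and invoking complete positivity of $\Gamma(z,z)$ applied to the positive element $I - {\mathbb E}(z){\mathbb E}(z)^* \geq 0$ (positivity guaranteed by \eqref{test-axiom}, which is readily verified for $\widetilde\Psi^N_{{\mathbb A}_q}$ using compactness of $\cU(N)\times {\mathbb T}^N$ and continuity of the Ahlfors family $t \mapsto \varphi_t$) yields $\|S(z)\|\le 1$. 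Hence $S \in \cS_{{\mathbb A}_q}({\mathbb C}^N)$.

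For the reverse inclusion $\cS_{{\mathbb A}_q}({\mathbb C}^N) \subseteq \mathcal{SA}_{\widetilde\Psi^N_{{\mathbb A}_q}}$, the plan is to establish the admissible-kernel containment $\cK_{\widetilde\Psi^N_{{\mathbb A}_q}}({\mathbb C}^N) \subseteq \cK_{\Psi^N_{{\mathbb A}_q}}({\mathbb C}^N)$; this implies the reverse containment of Schur-Agler classes (by the definition of $\mathcal{SA}$ as an intersection over admissible kernels), and Theorem \ref{T:matrixSchurR} then yields $\cS_{{\mathbb A}_q}({\mathbb C}^N) = \mathcal{SA}_{\Psi^N_{{\mathbb A}_q}} \subseteq \mathcal{SA}_{\widetilde\Psi^N_{{\mathbb A}_q}}$.

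The pivotal step is the upgrade of McCullough's real-part identity \eqref{McCresult} to the full holomorphic identity $F_{\bx,\bw}(z) = X\, R_{U,t}(z)\, X^*$ with $X$ unitary. The difference $G(z) = F_{\bx,\bw}(z) - X R_{U,t}(z) X^*$ is holomorphic on ${\mathbb A}_q$ and satisfies $G(z) + G(z)^* = 0$ pointwise. Examining entries, each diagonal $G_{jj}(z)$ is holomorphic and purely imaginary, hence constant, while each off-diagonal entry satisfies $G_{jk}(z) = -\overline{G_{kj}(z)}$, being simultaneously holomorphic and anti-holomorphic in $z$, hence also constant. Thus $G$ is a constant anti-Hermitian matrix. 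Evaluating at $z=\sqrt{q}$, where $F_{\bx,\bw}(\sqrt{q}) = I_N$ and $R_{U,t}(\sqrt{q}) = I_N$ (because $\Phi_{U,t}(\sqrt{q}) = 0$), gives $G(\sqrt{q}) = I_N - XX^*$, which is Hermitian; being also anti-Hermitian, it vanishes, so $XX^* = I_N$ (so $X$ is unitary) and $G \equiv 0$. Unitarity of $X$ then gives $(XRX^* \pm I)^{-1} = X(R\pm I)^{-1}X^*$, and a short Cayley-transform computation produces $S_{\bx,\bw}(z) = X\, \Phi_{U(\bx,\bw),\,t(\bx,\bw)}(z)\, X^*$.

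With this factorization in hand, the admissibility containment is immediate: for any $K \in \cK_{\widetilde\Psi^N_{{\mathbb A}_q}}({\mathbb C}^N)$ and any $(\bx,\bw)$, the factorization $R_{S_{\bx,\bw}} = R_{X^*} R_{\Phi_{U,t}} R_X$ on $\cH(K)_{{\mathbb C}^N}$ together with the fact that right-multiplication by a unitary constant acts unitarily on $\cC_2({\mathbb C}^N)$-valued reproducing kernel Hilbert spaces gives $\|R_{S_{\bx,\bw}}\| = \|R_{\Phi_{U,t}}\| \leq 1$; hence $K \in \cK_{\Psi^N_{{\mathbb A}_q}}({\mathbb C}^N)$. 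The main obstacle in this program is precisely the McCullough upgrade described above: passing from an equality of real parts of holomorphic functions to an equality of the full holomorphic functions with a unitary (not merely invertible) intertwiner. Once this is secured, the Cayley-transform computation and the admissibility-kernel argument are essentially routine.
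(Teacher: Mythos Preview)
Your proof is correct, and for the hard direction it takes a genuinely different route from the paper's.

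For the easy inclusion, the paper simply observes that the Fay kernel for $H^{2}(\omega_{t_{0}}) \otimes {\mathbb C}^{N}$ lies in $\cK_{\widetilde\Psi^{N}_{{\mathbb A}_{q}}}({\mathbb C}^{N})$, so any $S$ in the Schur--Agler class is a contractive multiplier there and hence belongs to $\cS_{{\mathbb A}_{q}}({\mathbb C}^{N})$. Your argument via the Agler decomposition and the transfer-function realization is also valid (the holomorphicity step does go through, since $z \mapsto {\mathbb E}(z)$ is norm-holomorphic by compactness of the parameter space and uniform Cauchy estimates for the family $\varphi_{t}$), but it is more roundabout.

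For the hard inclusion, the paper invokes the representation-theoretic criterion (condition (4) of Theorem~\ref{T:representation}): given $\pi$ with $\|\pi(\Phi_{U,t})\|\le 1$, the paper uses McCullough's real-part identity \eqref{McCresult} only with \emph{invertible} $X$ to conclude that $\pi(F_{\bx,\bw})$ has positive real part, hence $\|\pi(S_{\bx,\bw})\|\le 1$ via Cayley transform, and then applies the hereditary functional calculus to the integral decomposition \eqref{matrixRAglerdecom} to obtain $\|\pi(S)\|\le 1$. You instead upgrade \eqref{McCresult} to a pointwise identity $F_{\bx,\bw}=X R_{U,t} X^{*}$ with $X$ \emph{unitary} (your observation that $G=F_{\bx,\bw}-XR_{U,t}X^{*}$ is a constant skew-Hermitian matrix whose value at $\sqrt{q}$ is $I-XX^{*}$ is the key, and it is correct provided the base point $t_{0}$ of the Herglotz normalization is taken to be $\sqrt{q}$, which is the natural choice here). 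From this you extract the explicit conjugation $S_{\bx,\bw}=X\Phi_{U,t}X^{*}$, and the admissible-kernel containment $\cK_{\widetilde\Psi^{N}_{{\mathbb A}_{q}}}\subset \cK_{\Psi^{N}_{{\mathbb A}_{q}}}$ then follows from the unitarity of $R_{X}$ on $\cH(K)_{{\mathbb C}^{N}}$.

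Your route is more elementary, bypassing Theorem~\ref{T:representation} and the hereditary functional calculus entirely, and as a by-product it yields the sharper structural fact that $X$ is necessarily unitary and each $S_{\bx,\bw}$ is a unitary conjugate of some $\Phi_{U,t}$. The paper's argument, in contrast, is robust under the weaker hypothesis that $X$ be merely invertible, so it would still go through even if your unitarity upgrade failed.
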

   
   \begin{proof}  Suppose first that $S \in \mathcal{SA}_{\widetilde 
       \Psi^{N}_{{\mathbb A}_{q}}}$.  Then the right multiplication operator $R_{S}$ is 
       contractive on $\cH(K)_{{\mathbb C}^{N}}$ for each admissible 
       kernel $K$ in $\cK_{\widetilde \Psi^{N}_{{\mathbb A}_{q}}}({\mathbb C}^{N})$.  
       Such kernels include the Fay kernel associated with the Hardy 
       space $H^{2}(\omega_{t}) \otimes {\mathbb C}^{N}$ over 
       ${\mathbb A}_{q}$.  This 
       observation is enough to conclude that $S \in 
       \cS_{{\mathbb A}_{q}}({\mathbb C}^{N})$.
       
       Conversely suppose that $S \in  \cS_{{\mathbb A}_{q}}({\mathbb 
       C}^{N})$.
       To show that $S \in \mathcal{SA}_{\widetilde 
       \Psi^{N}_{{\mathbb A}_{q}}}({\mathbb C}^{n})$, by 
       Theorem \ref{T:representation} it suffices to show:  {\em for any 
       representation $\pi \colon H^{\infty}_{\widetilde 
       \Psi^{N}_{{\mathbb A}_{q}}}({\mathbb C}^{N}) \to \cL(\cK)$ such that $\|\pi(\Phi_{U,t})\| \le 
       1$ for all $(U, t) \in \cU(N) \times {\mathbb T}^{N}$, it 
       follows that $\| \pi(S) \| \le 1$}.  By replacing $\pi$ with $r 
       \cdot \pi$ with $r < 1$ and then taking a limit as $r$ tends 
       to $1$, without loss of generality we may suppose that $\| 
       \pi(\Phi_{U,t} )\| < 1$ for each $(U,t)$.   Then $\pi(R_{U,t}) 
       = (I - \pi(\Phi_{U,t}))^{-1} (I + \pi(\Phi_{U,t}))$ is a 
       well-defined bounded operator on $\cK$ such that
       \begin{equation}  \label{RUTposrealpart}
       \pi(R_{U,t})  + \pi(R_{U,t}) ^{*} = 
    2 \left(I - \pi(\Phi_{U,t}) \right)^{-1}
    \left( I -  \pi(\Phi_{U,t}) \pi(\Phi_{U,t})^{*} \right) 
    \left( I - \pi(\Phi_{U,t})^{*}\right)^{-1} > 0.
    \end{equation}
       From 
       \eqref{McCresult}, we see that, for each fixed $(\bx, \bw) \in 
       {\mathbb T}^{N}_{{\mathbb A}_{q}}$,  $\pi\left( F_{\bx,\bw}\right)$ 
       is a well-defined bounded operator on $\cK$ satisfying 
       \begin{equation} \label{Fxwposreal}
       \pi\left( F_{\bx, \bw}\right) + \pi\left( F_{\bx, 
       \bw}\right)^{*} = 
       \left(X \otimes I_{\cK}\right) \left( \pi(R_{U,t}) + 
       \pi(R_{U,t})^{*}\right) \left( X^{*} \otimes I_{\cK} \right).
       \end{equation}
       From \eqref{RUTposrealpart} we read off that  $\pi\left( 
       F_{\bx, \bw}\right)$
       has positive real part.  We next obtain $\pi(S_{\bx, \bw})$ as 
       a Cayley transform of $\pi\left(F_{\bx, \bw}\right)$:
       $$
       \pi(S_{\bx, \bw}) = \left( \pi(F_{\bx, \bw}) + I\right)^{-1} 
       \left(\pi(F_{\bx, \bw}) - I \right).
       $$
       From the relation
       $$
       I - \pi(S_{\bx, \bw}) \pi(S_{\bx,\bw})^{*} = 2 \left( 
       \pi(F_{\bx, \bw}) + I \right)^{-1} 
       \left( \pi(F_{\bx, \bw}) + \pi(F_{\bx, \bw})^{*} \right)
       \left( \pi(F_{\bx, \bw})^{*} + I\right)^{-1}
       $$
       combined with \eqref{Fxwposreal}, we see that $\| \pi(S_{\bx, 
       \bw}) \| \le 1$.  Finally, since $S \in \cS_{{\mathbb 
       A}_{q}}({\mathbb C}^{N})$, $S$ has an Agler decomposition as 
       in \eqref{matrixRAglerdecom}.  Applying the hereditary 
       functional calculus with the representation $\pi$ through this 
       integral representation gives
       $$
       I - \pi(S) \pi(S)^{*} =
       \int_{{\mathbb T}^{N}_{\cR}} \pi(H_{\bx, \bw}) (I - 
       \pi(S_{\bx, \bw}) \pi(S_{\bx, \bw})^{*} ) \pi(H_{\bx, 
       \bw})^{*} \,{\tt d}\nu(\bx, \bw).
       $$
       Since $\|\pi(S_{\bx, \bw})\| \le 1$ for each $(\bx, \bw) \in 
       {\mathbb T}^{N}_{{\mathbb A}_{q}}$, we read off from this last 
       expression that $\| \pi(S) \| \le 1$
        \end{proof}

	    As a corollary of Theorem \ref{T:annulustestfunc} 
	    combined with Theorem \ref{T:SchurAgler}, we get the 
	    following structure theorem for the Schur-Agler class 
	    over the annulus ${\mathbb A}_{q}$.  To this end we 
	    introduce the space $\widehat {\mathbb T}^{N}_{{\mathbb 
	    A}_{q}} = ( \cU(N)/\cU(1)^{N}) \times {\mathbb T}^{N}$, 
	    where here $\cU(1)^{N}$ is identified with unitary diagonal 
	    $N \times N$ matrices, and the action of $\cU(1)^{N}$ on 
	    $\cU(N)$ is given by
	    $$
	    u  \colon U \mapsto U u \text{ for } u =
	     \begin{bmatrix} u_{1} & & \\ & \ddots & \\ & & u_{N} 
	     \end{bmatrix} \in \cU(1)^{N}.
	     $$
	    For $([U], t) \in \widetilde {\mathbb T}^{N}_{{\mathbb 
	    A}_{q}}$, we abuse notation somewhat and set 
	    $$
	    \Phi_{[U],t} =\Phi_{U,t} U^{*}.
	    $$
	    Note 
	    that $\Phi_{[U],t}$ is well-defined (independent of the 
	    choice of representative of the coset $[U]$).   Note that 
	    each $\Phi_{[U],t}$ is normalized to satisfy 
	    $\Phi_{[U],t}(1) = I_{N}$ as well as 
	    $\Phi_{[U],t}(\sqrt{q}) = 0$.  Furthermore the expression 
	    $I - \Phi_{U,t}(z) \Phi_{U,t}(w)^{*}$ is independent of 
	    choice of coset representative for $[U]$.
	    Also it is easily checked that the set of admissible 
	    kernels $\cK_{\Psi}$ associated with a given collection 
	    of test functions $\Psi$ depends on the functions $\psi 
	    \in \Psi$ only through the expressions 
	    $I - \psi(z) \psi(w)^{*}$.  Hence the result of Theorem 
	    \ref{T:annulustestfunc} can equally well be stated as:
	    \begin{equation}   \label{annulus:reduced}
		\cS^{N}_{{\mathbb A}_{q}}({\mathbb C}^{n}) = 
		\mathcal{SA}_{\widehat \Psi^{N}_{{\mathbb 
		A}_{q}}}({\mathbb C}^{N})
	\end{equation}
	where we have set
	$$\widehat \Psi^{N}_{{\mathbb A}_{q}} = \{ \Phi_{[U],t} 
	\colon ([U],t) \in \widehat {\mathbb T}^{N}_{{\mathbb A}_{q}}	\}.
	$$
	Then the following corollary is an immediate consequence of 
	our man theorem on the test-function Schur-Agler class, 
	namely Theorem \ref{T:SchurAgler}.

	    \begin{corollary} \label{C:annulus}
		Suppose that $S \in \cS_{{\mathbb A}_{q}}({\mathbb 
		C}^{N})$.  Then the following hold:
		\begin{enumerate}
		    \item $S$ has an Agler decomposition of the form
		\begin{align}  
	&	    I - S(z) S(w)^{*}  \notag \\
	& \quad = 
\int_{\widehat {\mathbb T}^{N}_{{\mathbb A}_{q}}}
 H_{[U], t}(z) \left( I_{\cX_{[U],t}} \otimes (I - 
 \Phi_{[U],t}(z) \Phi_{[U],t}(w)^{*} \right) H_{[U],t}(w)^{*} \, {\tt 
 d}\nu([U], t).
 \label{annAglerdecom} 
	\end{align}

\item There is a representation $\rho$ of $ C(\widehat{\mathbb T}^{N}_{{\mathbb 
		    A}_{q}}, \cL({\mathbb C}^{N}))$ on a Hilbert 
		    space $\cX$ and a unitary 
		    colligation matrix
$$
 \bU = \begin{bmatrix} A & B \\ C & D \end{bmatrix} \colon 
 \begin{bmatrix} \cX \\ {\mathbb C}^{N} \end{bmatrix} \to 
      \begin{bmatrix} \cX \\ {\mathbb C}^{N} \end{bmatrix}
$$
so that $S$ has the transfer-function realization
$$
  S(z) = D + C (I - \rho({\mathbb E}(z)) A)^{-1} \rho({\mathbb E}(z)) 
  B.
$$
\end{enumerate}
\end{corollary}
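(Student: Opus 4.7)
The plan is to deduce both parts directly from Theorem \ref{T:SchurAgler} applied to the test-function class $\Psi = \widehat\Psi^N_{{\mathbb A}_q}$, after invoking the identification $\cS_{{\mathbb A}_q}({\mathbb C}^N) = \mathcal{SA}_{\widehat\Psi^N_{{\mathbb A}_q}}({\mathbb C}^N)$ given by \eqref{annulus:reduced}. Two simplifications available in the present setting make the corollary essentially immediate from the main theorem. First, the parameter space $\widehat{\mathbb T}^N_{{\mathbb A}_q} = (\cU(N)/\cU(1)^N) \times {\mathbb T}^N$ is compact, and the parametrization $([U], t) \mapsto \Phi_{[U], t}$ is continuous into the pointwise weak-$*$ topology on the ambient test-function space. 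Hence $\widehat\Psi^N_{{\mathbb A}_q}$ is itself compact, its Stone-\v{C}ech compactification is the parameter space $\widehat{\mathbb T}^N_{{\mathbb A}_q}$, and $C_b(\widehat\Psi^N_{{\mathbb A}_q}, \cL({\mathbb C}^N))$ is identified with $C(\widehat{\mathbb T}^N_{{\mathbb A}_q}, \cL({\mathbb C}^N))$. Second, we are in the square case $\cU = \cY = \cU_T = \cY_T = {\mathbb C}^N$; by the observation following \eqref{tensorid}, the internal tensor product $C_b(\Psi, \cL({\mathbb C}^N, {\mathbb C}^N)) \otimes_\rho \cX$ collapses to $\cX$ via the balancing property \eqref{balance}, and the operator $L^*_{{\mathbb E}(z)^*}$ is identified with $\rho({\mathbb E}(z)) \in \cL(\cX)$.

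To get part (2), I would directly invoke condition (3) of Theorem \ref{T:SchurAgler} with $\Omega_0 = \Omega = {\mathbb A}_q$. The $\Psi$-unitary colligation it produces, after the two simplifications described above, is an honest unitary $\bU \colon \cX \oplus {\mathbb C}^N \to \cX \oplus {\mathbb C}^N$ with state space $\cX$ carrying a $*$-representation $\rho$ of $C(\widehat{\mathbb T}^N_{{\mathbb A}_q}, \cL({\mathbb C}^N))$, and the realization formula \eqref{S0realization} becomes the one asserted in (2).

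For part (1), I would use Theorem \ref{T:SchurAgler}(2) to obtain a completely positive kernel $\Gamma$ with $I - S(z)S(w)^* = \Gamma(z,w)[I - {\mathbb E}(z){\mathbb E}(w)^*]$, and then invoke the finer analysis that appears in the proof of (2)~$\Rightarrow$~(1) of Theorem \ref{T:SchurAgler}: because $\cY_T = {\mathbb C}^N$ is finite-dimensional, $C(\widehat{\mathbb T}^N_{{\mathbb A}_q}, \cL({\mathbb C}^N))$ is CCR, so a Kolmogorov factorization $\Gamma(z,w)[g] = H(z)\rho(g)H(w)^*$ from condition (3$^{\prime}$) of Theorem \ref{T:cpker} combined with the direct-integral decomposition of $\rho$ into multiples of point-evaluation irreducible representations $\pi_{([U], t)}(g) = g([U], t)$ yields the detailed form \eqref{detailed}. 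Collecting the multiplicity factors into the fiber spaces $\cX_{[U], t}$ and merging the measures $\mu_\infty, \mu_1, \mu_2, \ldots$ into a single finite measure $\nu$ on $\widehat{\mathbb T}^N_{{\mathbb A}_q}$ produces \eqref{annAglerdecom}. The only mildly delicate step is the topological identification of $C_b(\widehat\Psi^N_{{\mathbb A}_q}, \cL({\mathbb C}^N))$ with $C(\widehat{\mathbb T}^N_{{\mathbb A}_q}, \cL({\mathbb C}^N))$, which I expect to be the main bookkeeping obstacle; it reduces to verifying that $([U], t) \mapsto \Phi_{[U], t}$ is a homeomorphism onto its image, an elementary consequence of continuity and compactness.
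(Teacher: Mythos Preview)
Your approach matches the paper's, which offers no proof beyond declaring the corollary ``an immediate consequence'' of Theorem~\ref{T:SchurAgler} together with the identification \eqref{annulus:reduced}; you have correctly unpacked the two simplifications (compactness of the parameter space, so that the Stone--\v{C}ech compactification is trivial, and the square-case collapse of the tensor product noted after \eqref{tensorid}) that make the abstract conclusions of Theorem~\ref{T:SchurAgler} specialize to the asserted forms.

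One small caveat on your final sentence: the map $([U],t)\mapsto\Phi_{[U],t}=U\operatorname{diag}(\varphi_{t_j})U^{*}$ is not a homeomorphism onto its image, because it is invariant under simultaneous permutation of the entries of $t$ and the columns of $U$, and permutation matrices are not absorbed by the $\cU(1)^{N}$-quotient. This does not damage the argument. The map is still a continuous surjection from the compact space $\widehat{\mathbb T}^{N}_{{\mathbb A}_{q}}$ onto $\widehat\Psi^{N}_{{\mathbb A}_{q}}$, so the latter is compact and equals its own Stone--\v{C}ech compactification; a Borel section then lets you push the measures and fibre data from the direct-integral form \eqref{detailed} back to $\widehat{\mathbb T}^{N}_{{\mathbb A}_{q}}$, yielding both \eqref{annAglerdecom} and, via the same section applied to the point-evaluation irreducibles, the representation $\rho$ of $C(\widehat{\mathbb T}^{N}_{{\mathbb A}_{q}},\cL({\mathbb C}^{N}))$ asserted in part~(2).
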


\begin{remark} \label{R:appealing}  {\em An appealing conjecture is 
    that the Agler decomposition \eqref{annAglerdecom} is {\em 
    minimal} in the sense of \cite[Section 5.1]{DM07} and 
    \cite[Section 3.6]{DP}.  
  } \end{remark}

 \subsection{The constrained Schur class over the unit disk}  
 \label{S:constrained}
 
 Following \cite{DPRS, BBtH}, we define the {\em constrained Hardy 
 space} $H^{\infty}_{1}$ over the unit disk ${\mathbb D}$
 to consist of bounded analytic functions $s$ on ${\mathbb D}$ such 
 that $s'(0) = 0$.  One can check that this is still an algebra. 
  In this section we identify a class of test 
 functions $\Psi^{N}_{1}$ for which the unit ball 
 $\overline{\cB}(H^{\infty})^{N \times N}$ of the algebra of $N 
 \times N$ matrices over $H^{\infty}_{1}$ (with norm equal to the 
 multiplier norm as multiplication operators on $(H^{2})^{N}$) can 
 best identified as the test-function Schur-Agler class 
 $\mathcal{SA}_{\Psi^{N}_{1}}({\mathbb C}^{N})$.

 The analysis parallels that of Section \ref{S:cR} for the Schur class 
 over a finitely connected planar domain.  One first identifies the 
 extreme points for the Herglotz class $\cH_{1}^{N}$ consisting of $N 
 \times N$ matrix-valued functions $F$ on ${\mathbb D}$ satisfying the 
 normalization $F(0) = I$ together with the side constraint $F'(0) = 
 0$.  Such functions are exactly the Cayley transforms
 $$
   F(z) = (I - S(z))^{-1} (I + S(z))
 $$
 of functions $S$ in the closed unit ball 
 $\overline{\cB}(H^{\infty}_{1})^{N \times N}$ of the  matrix-valued 
 constrained Hardy algebra $(H^{\infty}_{1})^{N \times N}$ subject to 
 the normalization $S(0) = 0$.  As is the case for any matrix-valued 
 Herglotz function on ${\mathbb D}$, there is a positive 
 matrix-valued measure $\mu$ on ${\mathbb T}$ so that $F$ has the 
 Herglotz representation
 $$
   F(z) = \int_{\mathbb T}  \frac{\zeta + z}{\zeta - z} \, {\tt 
   d}\mu(\zeta).
 $$
 The constraint that $F(0) = I_{N}$ is equivalent to $\mu({\mathbb 
 T}) = I_{N}$; following the terminology used in Section \ref{S:cR},
 we then say that  $\mu$ is an {\em $N \times N$ quantum probability 
 measure}.    The constraint that $F'(0)$ equals zero (i.e., that $F \in 
 \cH^{N}_{1}$) imposes the constraints on the measure $\mu$:
 $$
 F'(0) = \int_{{\mathbb T}} \zeta^{-1} {\tt d}\mu(\zeta) 
 =\int_{{\mathbb T}} \overline{\zeta} {\tt d}\mu(\zeta) = 0.
 $$
 Taking real and imaginary part then gives us two real constraints
 \begin{equation}   \label{muconstraint}
 \int_{{\mathbb T}} \R \zeta\, {\tt d} \mu(\zeta) = 0,   \quad
 \int_{{\mathbb T}} \I \zeta\, {\tt d} \mu(\zeta) = 0.
 \end{equation}
 We thus see that the convex set $\cH^{N}_{I}$ (the constrained 
 matrix-valued Herglotz class over ${\mathbb D}$) is affinely 
 equivalent to the convex set of measures
 $$
 \cC^{N}_{1} = \{ \mu: \mu = \text{\rm $N \times N$ quantum probability measure   such 
 that \eqref{muconstraint} holds}\}.
 $$
 This convex set of measures is compact in the weak-$*$ topology 
 (viewing complex $N \times N$ matrix-valued  measures as the dual space of
 ${\mathbb C}^{N}$-valued continuous functions on ${\mathbb T}$) and hence, by 
 the Kre\u{\i}n-Milman theorem, has extreme points.
 By the same general results from \cite{BG} leading to the the 
 identification of the set \eqref{HerglotzRextreme} of the normalized 
 Herglotz class $\cH(\cR)_{I}$ over the planar domain $\cR$, it 
 follows that the extreme points of $\cC^{N}_{1}$ can be described as 
 follows.  We let $\widehat \Theta^{N}$ consist of all pairs $(\bt, 
 \bw)$ where $\bt = (t_{1}, \dots, t_{n})$ is an $n$-tuple of points 
 on the unit circle ${\mathbb T}$ (with $1 \le n \le 3 N$) and $\bw = 
 (W_{1}, \dots, W_{n})$ is an $n$-tuple of $N \times N$ matrix 
 weights such that the following property holds: {\em
 ${\mathbf 0} = 0 \otimes I_{N}$ is in the interior 
 of the $C^{*}$-convex hull of $\bphi(\bt) \otimes I_{N}$}, where we 
 set
 $$
 \bphi(\bt) = \left\{ \begin{bmatrix} \R t_{1} \\ \I t_{1} 
\end{bmatrix}, \dots, \begin{bmatrix} \R t_{n} \\ \I t_{n} 
\end{bmatrix} \right\} \subset {\mathbb R}^{2}.
$$
{\em with a choice of matrix barycentric coordinates of ${\mathbf 0}$ 
with respect to $\bphi(\bt) \otimes I_{N}$ equal to $\{W_{1}, \dots, 
W_{n}\}$} (refer back to Section \ref{S:cR} for the definition of 
terms).  One consequence of the definitions is that, for any such 
$(\bt, \bw) = (t_{1}, \dots, t_{n}; W_{1}, \dots, W_{n})$ in 
$\widehat \Theta^{N}$, it holds that
\begin{equation}   \label{1system}
    \sum_{r=1}^{n} (\R t_{r}) W_{r} = 0, \quad
    \sum_{r=1}^{n} (\I t_{r}) W_{r} = 0.
 \end{equation}

Associated with each $(\bt, \bw) \in \widehat \Theta^{N}$ is a 
  holomorphic $N \times N$-matrix function on the unit disk given by
  $$
  F_{\bt, \bw}(z) = \sum_{r=1}^{n} \frac{t_{r} + z}{t_{r}-z} W_{r}.
  $$
  These functions are holomorphic on ${\mathbb D}$ with positive real 
  part, and moreover, as a consequence of \eqref{1system}, have the 
  property that $F_{\bt, \bw}'(0) = 0$.   In fact, it can be shown that the set of all such functions
  $\{ F_{\bt, \bw} \colon (\bt, \bw) \in {\mathbb T}^{N}_{1} \}$
  is exactly the set of extreme points for the normalized constrained 
  Herglotz class over ${\mathbb D}$, i.e., the class
  \begin{align*}
  (\cH^{N}_{1})_{I_{N}}: = &   \{ F \colon {\mathbb D} \to {\mathbb C}^{N \times N} 
  \colon F \text{ holomorphic, } \R F(z) \ge 0 \text{ for } z \in 
  {\mathbb D}, \\ & \quad  F(0) = I_{N}, \, F'(0) = 0 \}.
  \end{align*}
  By using Choquet theory it then follows that a general element $F$ 
  of $(\cH_{1}^{N})_{I_{N}}$ has an integral representation of the form
  $$
  F(z) = \int_{\widehat \Theta^{N}} F_{\bt, \bw}(z)\, {\tt d}\nu(\bt, 
  \bw)
  $$
  for some probability measure on $\widehat \Theta^{N}$.
  
  We note that $(\cH^{N}_{1})_{I_{N}}$ is exactly the Cayley transform of the 
  normalized constrained Schur class 
  \begin{align*}
      (\cS^{N}_{1})_{0} = & \{ S \colon {\mathbb D} \to {\mathbb C}^{N \times 
  N} \colon S \text{ holomorphic, } \| S(z) \| \le 1 \text{ for } z 
  \in {\mathbb D}, \\
  & \quad S(0) = 0, \, S'(0) = 0 \}, 
  \end{align*}
  i.e., 
  \begin{align*}
  & S \in (\cS^{N}_{1})_{0} \Leftrightarrow F: = (I - S)^{-1}(I+S) \in 
  (\cH^{N}_{1})_{I_{N}},  \\
  & F \in (\cH^{N}_{1})_{I_{N}} \Leftrightarrow S: = (F+I)^{-1} (F - 
  I) \in (\cS^{N}_{1})_{0}.
  \end{align*}
  In particular, for each $(\bt, \bw) \in {\mathbb T}^{N}_{1}$ we may
  define functions $S_{\bt, \bw} \in (\cS^{N}_{1})_{0}$ which in turn 
  leads us to the following collection of functions in $(\cS^{N}_{1})_{0}$:
  \begin{equation}   \label{PsiN1}
    \Psi^{N}_{1}: =\{ S_{\bt, \bw}(z) =  (F_{\bt, \bw}(z) +I)^{-1} (F_{\bt, \bw}(z) - 
    I) \colon (\bt, \bw) \in \widehat \Theta^{N}\}.
  \end{equation}
   Following the proof of Theorem 5.4 in \cite{BG} (the parallel 
  result for the matrix Schur class over  a planar domain $\cR$ in 
  place of $\cS^{N}_{1}$) then leads to the 
  integral Agler decomposition for the normalized constrained Schur 
  class:  {\em given $S \in (\cS^{N}_{1})_{0}$ there is a 
  function $((\bt, \bw), z) \mapsto H_{\bt, \bw}(z)$ on $\widehat 
  \Theta^{N} \times {\mathbb D}$, bounded and
  measurable in $(\bt, \bw) \in {\mathbb T}^{N}_{1}$ for each fixed 
  $z$, together with a probability measure $\nu$ on $\widehat 
  \Theta^{N}$,  so that}
  \begin{equation}   \label{1Aglerdecom}
  I - S(z) S(w)^{*} = 
  \int_{\widehat \Theta^{N}} H_{\bt, \bw}(z) \left( I - S_{\bt, \bw}(z) 
  S_{\bt, \bw}(w)^{*}  \right) H_{\bt, \bw}(w)^{*} \, {\tt d} \nu(\bt, \bw).
  \end{equation}
  If $S$ is in the strict constrained Schur class ($S \in 
  \overline{\cB}(H^{\infty}_{1})^{N \times N}$ with $\| S(0) \| < 
  1$), then  there is a choice of matrix M\"obius transformation on 
  the $N \times N$-matrix ball $T_{S(0)}$ so that $T_{S(0)}[S(z)]$ is 
  in the normalized constrained Schur class $(\cS^{N}_{1})$ (see 
  e.g.~\cite[Section 5]{BG}.  Using this one can see that functions 
  $S$ in the strict but unnormalized Schur class $\cS^{N}_{1}: = 
  \overline{\cB}(H^{\infty}_{1})^{N \times N}$ have the continuous 
  Agler decomposition \eqref{1Aglerdecom} as well.  
  
  Once this Agler 
  decomposition is in hand, by using the same techniques as used in 
  the proofs of Theorems \ref{T:matrixSchurR} (adaptations to the 
  matrix-valued setting of arguments in \cite{DM07} and \cite{DP}),  
  one can arrive at the following result. 
  
  \begin{theorem}   \label{T:1matrixSchur}
      With $\Psi^{N}_{1} \subset (\cS^{N}_{1})_{0}$ given by 
      \eqref{PsiN1}, we have the identity
      $$
      \overline{\cB}(H^{\infty}_{1})^{N \times N} = 
      \mathcal{SA}_{\Psi^{N}_{1}}.
      $$
  \end{theorem}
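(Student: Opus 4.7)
The plan is to establish the two inclusions separately, appealing to the Agler decomposition and Theorem \ref{T:SchurAgler} in both directions.

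For the inclusion $\mathcal{SA}_{\Psi^N_1} \subseteq \overline{\cB}(H^\infty_1)^{N\times N}$, I would first verify the test-function axiom \eqref{test-axiom}: each $\psi = S_{\bt,\bw}$ satisfies $\psi(0)=0$ and $\psi'(0)=0$ (since $F_{\bt,\bw}(0) = I_N$ and $F'_{\bt,\bw}(0) = 0$), so two applications of the operator-valued Schwarz lemma give $\|S_{\bt,\bw}(z)\| \le |z|^2 < 1$ on $\mathbb{D}$, uniformly in $(\bt,\bw)$. Since $\operatorname{dim}\cY_T = N < \infty$, Theorem \ref{T:SchurAgler} provides any $S \in \mathcal{SA}_{\Psi^N_1}$ with a transfer-function realization \eqref{S0realization} based on a $\Psi$-unitary colligation $\bU$; unitarity yields $\|S(z)\| \le 1$ on $\mathbb{D}$ by a standard argument. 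For the constraint $S'(0)=0$, the vanishing $\mathbb{E}(0)=0$ and $\mathbb{E}'(0)=0$ in $C_b(\Psi^N_1, \cL(\mathbb{C}^N))$ forces $L^*_{\mathbb{E}(z)^*}$ and its first $z$-derivative to vanish at $z=0$; the product rule applied to the realization formula then gives $S(0) = D$ and $S'(0) = 0$, placing $S$ in $\overline{\cB}(H^\infty_1)^{N\times N}$.

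For the reverse inclusion $\overline{\cB}(H^\infty_1)^{N\times N} \subseteq \mathcal{SA}_{\Psi^N_1}$, the strategy is to establish the continuous Agler decomposition \eqref{1Aglerdecom} and then apply Theorem \ref{T:SchurAgler} in the reverse direction. First restrict to the normalized subclass $(\cS^N_1)_0$: for $S \in (\cS^N_1)_0$ the Cayley transform $F = (I - S)^{-1}(I + S)$ lies in the compact convex set $(\cH^N_1)_{I_N}$, whose extreme points are exactly $\{F_{\bt,\bw} : (\bt,\bw) \in \widehat\Theta^N\}$ by the $C^*$-convex hull analysis of \cite{BG}. Since $\widehat\Theta^N$ is a metrizable compact space, Choquet's theorem produces a probability measure $\nu$ on $\widehat\Theta^N$ with $F = \int F_{\bt,\bw}\,d\nu$. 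Combining the Cayley-transform identity $I - S(z)S(w)^* = 2(F(z)+I)^{-1}(F(z)+F(w)^*)(F(w)^*+I)^{-1}$ with its counterpart for each $S_{\bt,\bw}$, and setting $H_{\bt,\bw}(z) := (F(z)+I)^{-1}(F_{\bt,\bw}(z)+I)$, a Fubini-type interchange yields \eqref{1Aglerdecom}. To remove the normalization $S(0)=0$, compose with a matrix M\"obius transformation $T_{S(0)}$ on the $N \times N$ ball when $\|S(0)\| < 1$, derive the decomposition for $T_{S(0)}[S] \in (\cS^N_1)_0$, and pull back; the boundary case $\|S(0)\| = 1$ is handled via dilation $S \mapsto rS$, $r \uparrow 1$, and a weak-$*$ compactness argument on the resulting completely positive kernels paralleling Lemma \ref{L:Cclosed}. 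Finally, \eqref{1Aglerdecom} exhibits condition (2) of Theorem \ref{T:SchurAgler} through the completely positive kernel $\Gamma(z,w)[g] = \int H_{\bt,\bw}(z)\, g(S_{\bt,\bw})\, H_{\bt,\bw}(w)^*\,d\nu(\bt,\bw)$, and since $\operatorname{dim}\cY_T = N < \infty$, the implication (2) $\Rightarrow$ (1) gives $S \in \mathcal{SA}_{\Psi^N_1}$.

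The main obstacle is the hereditary-calculus step turning the Choquet representation $F = \int F_{\bt,\bw}\,d\nu$ into the pointwise factorization \eqref{1Aglerdecom}: it requires a measurable selection of the Kolmogorov factors $H_{\bt,\bw}$ and a careful interchange of the integral with $z,w$-evaluations, paralleling the proof of Theorem 5.4 in \cite{BG} but in the disk setting with the extra affine constraints \eqref{muconstraint}. The secondary technical point is preservation of the Agler decomposition in the limit $r \uparrow 1$ needed to handle $\|S(0)\| = 1$, which will rest on the weak-$*$ compactness of bounded completely positive kernels used in Lemma \ref{L:Cclosed}.
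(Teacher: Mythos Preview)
Your proposal is correct and, for the hard inclusion $\overline{\cB}(H^{\infty}_{1})^{N\times N}\subseteq\mathcal{SA}_{\Psi^{N}_{1}}$, follows exactly the route the paper sketches: Choquet representation of $(\cH^{N}_{1})_{I_{N}}$ over its extreme points $\{F_{\bt,\bw}\}$ (imported from \cite{BG}), Cayley transform to the Agler decomposition \eqref{1Aglerdecom}, matrix M\"obius reduction to drop the normalization $S(0)=0$, and then condition (2) of Theorem \ref{T:SchurAgler}. The paper phrases this last step via the representation-theoretic condition (4) of Theorem \ref{T:representation}, as in the proof of Theorem \ref{T:annulustestfunc}, but the content is identical. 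Your handling of the boundary case $\|S(0)\|=1$ by passing to $rS$ and invoking weak-$*$ compactness of the completely positive kernels is the natural way to close the gap the paper leaves somewhat implicit.

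For the easy inclusion $\mathcal{SA}_{\Psi^{N}_{1}}\subseteq\overline{\cB}(H^{\infty}_{1})^{N\times N}$ you take a genuinely different path. The paper's template (Theorem \ref{T:annulustestfunc}, following \cite{DM07} and \cite{DP}) is to exhibit specific $\Psi^{N}_{1}$-admissible kernels---here the $K^{\alpha,\beta}$ of \eqref{genkernels}---whose contractive right multipliers are exactly $\overline{\cB}(H^{\infty}_{1})^{N\times N}$; this relies on the interpolation result of \cite{BBtH}. You instead read off both $\|S(z)\|\le1$ and $S'(0)=0$ directly from the realization \eqref{S0realization}, using that the uniform Schwarz bound $\|\psi(z)\|\le|z|^{2}$ forces ${\mathbb E}(0)=0$ and ${\mathbb E}'(0)=0$ in the sup norm of $C_{b}(\Psi^{N}_{1},\cL({\mathbb C}^{N}))$, hence $L^{*}_{{\mathbb E}(z)^{*}}$ and its $z$-derivative vanish at the origin. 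Your argument is self-contained within the test-function machinery of Section \ref{S:main} and avoids the external appeal to \cite{BBtH}; the paper's kernel route, on the other hand, simultaneously identifies a generating set for $\cK_{\Psi^{N}_{1}}({\mathbb C}^{N})$, which is what drives condition (3) in Theorem \ref{T:1interpolation}.
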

  
  There is also a dual pair of solution criteria for the 
  interpolation problem for the class $\cS^{N}_{1}$.  We first need 
  to introduce the generating set of admissible kernels for the class 
  $\cK_{\Psi^{N}_{1}}({\mathbb C}^{N})$ as follows.  For each isometric 
  $2N \times 1$ matrix, written as $\left[ 
  \begin{smallmatrix} \alpha \\ \beta \end{smallmatrix} \right]$ with 
      $\alpha$ and $\beta$ equal to  $N \times 1$  column vectors 
      satisfying $\alpha^{*} \alpha + \beta^{*} \beta = 1$, we 
      introduce the collection of $N \times N$-matrix kernel functions
     \begin{align}  
	 (\Psi^{N}_{1})^{0} = & \{
  K^{\alpha, \beta}(z,w) : =( \alpha z + z \beta) (\alpha^{*} + 
  \overline{w} \beta^{*}) + \frac{z^{2} \overline{w}^{2}}{1 - z 
  \overline{w}} I_{N} \colon   \notag \\
  & \quad \alpha, \beta \in {\mathbb C}^{N \times 
  1}, \, \alpha^{*} \alpha + \beta^{*} \beta = 1\}.
  \label{genkernels}
  \end{align}
  Then we have the following result.
  
    \begin{theorem} \label{T:1interpolation}
      Suppose that we are given an $N \times N$ matrix-valued 
      function $S_{0}$ on the subset ${\mathbb D}_{0}$ of the unit 
      disk ${\mathbb D}$.  Then the following are equivalent:
      
      \begin{enumerate}
	  \item There is a function $S$ in the restricted Schur class 
	  $\cS^{N}_{1}$ with $S|_{{\mathbb D}_{0}} = S_{0}$.
	  
	  \item There is a matrix-valued function $((\bt, \bw), z) 
	  \mapsto H_{\bt, \bw}(z)$ on $\widehat \Theta^{N} \times 
	  {\mathbb D}_{0}$, bounded and measurable in $(\bt, \bw)$ 
	  for each fixed $z \in {\mathbb D}_{0}$, together with a finite 
	  measure $\nu$ on $\widehat \Theta^{N}$, so that
$$
I - S_{0}(z) S_{0}(w)^{*} = \int_{\widehat \Theta^{N}} 
H_{\bt, \bw}(z) \left(I_{\cX_{\bt, \bw}} \otimes (I - S_{\bt, \bw}(z) S_{\bt, 
\bw}(w)^{*}) \right) H_{\bt, \bw}(w)^{*}\, {\tt d}\nu(\bt, \bw).
$$

\item  For each $2N \times 1$ isometric matrix $\left[ 
\begin{smallmatrix} \alpha \\ \beta \end{smallmatrix} \right]$ and 
    for each $Y  \colon {\mathbb D}_{0} \to {\mathbb C}^{N \times 
    N}$, the kernel
  \begin{equation}   \label{1-Pick}
      k(z,w) = \operatorname{tr} \left( Y(w)^{*}( I - S_{0}(w)^{*} S_{0}(z) 
)Y(z) ) K^{\alpha, \beta}(z,w) \right)
\end{equation}
(where $K^{\alpha, \beta}$ is given by \eqref{genkernels}) is a 
positive kernel on ${\mathbb D}_{0}$.
 \end{enumerate}
  \end{theorem}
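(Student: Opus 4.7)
The plan is to mirror the strategy of Theorem \ref{T:Rinterpolation}, leveraging the identification $\overline{\cB}(H^{\infty}_{1})^{N \times N} = \mathcal{SA}_{\Psi^{N}_{1}}$ furnished by Theorem \ref{T:1matrixSchur} so that both equivalences reduce to applications of the general framework of Theorem \ref{T:SchurAgler} and Remark \ref{R:interpolation}.

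For (1) $\Leftrightarrow$ (2), I will first use Theorem \ref{T:1matrixSchur} to recast (1) as: $S_{0}$ admits an extension to $S \in \mathcal{SA}_{\Psi^{N}_{1}}({\mathbb C}^{N})$ defined on all of ${\mathbb D}$. By Remark \ref{R:interpolation}, this is in turn equivalent to $S_{0} \in \mathcal{SA}_{\Psi^{N}_{1}|{\mathbb D}_{0}}({\mathbb C}^{N})$. Since $\operatorname{dim} {\mathbb C}^{N} < \infty$, the implication (1) $\Leftrightarrow$ (2) of Theorem \ref{T:SchurAgler} produces an Agler decomposition, and the explicit direct-integral form \eqref{detailed} extracted in the proof of (2) $\Rightarrow$ (1) there, when specialized to $\Psi^{N}_{1} = \{ S_{\bt, \bw} \colon (\bt, \bw) \in \widehat\Theta^{N}\}$, collapses the Stone-\v{C}ech machinery and delivers precisely the integral representation (2).

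For (1) $\Leftrightarrow$ (3), I will use the dual formulation from Remark \ref{R:interpolation}: it suffices to verify that the family $(\Psi^{N}_{1})^{0} = \{K^{\alpha, \beta}\}$ from \eqref{genkernels} is a \emph{generating set} for the admissible kernel class $\cK_{\Psi^{N}_{1}}({\mathbb C}^{N})$. Once this is granted, the criterion \eqref{1-Pick} is exactly what the dual criterion of Remark \ref{R:interpolation} reads when $K^{0}$ ranges over $(\Psi^{N}_{1})^{0}$. The first half of the generating-set property, namely $K^{\alpha, \beta} \in \cK_{\Psi^{N}_{1}}({\mathbb C}^{N})$, is a direct verification: $K^{\alpha, \beta}$ is the reproducing kernel of the subspace of $H^{2}({\mathbb D}) \otimes {\mathbb C}^{N}$ on which the pair $(f(0), f'(0)) \in {\mathbb C}^{2N}$ is a scalar multiple of $\sbm{\alpha \\ \beta}$, and since every test function $S_{\bt, \bw}$ vanishes to second order at $0$, right multiplication by $S_{\bt, \bw}$ preserves this subspace and is contractive there. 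Alternatively, paralleling the proof of Theorem \ref{T:Rinterpolation}, one may invoke the commutant-lifting-based solution criterion for the constrained matrix Nevanlinna-Pick problem already established in \cite{BBtH, DPRS} and reformulate it via right multiplication operators on $\cH(K^{\alpha, \beta})_{{\mathbb C}^{N}}$ to obtain \eqref{1-Pick} directly.

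The hard part will be the second half of the generating-set property: showing that every admissible kernel $K \in \cK_{\Psi^{N}_{1}}({\mathbb C}^{N})$ is congruent to some $K^{\alpha, \beta}$, i.e., $K(z,w) = Y(z) K^{\alpha, \beta}(z,w) Y(w)^{*}$ for some operator-valued function $Y$. This is the analogue, for the constrained Hardy algebra, of the role played by McCullough's diagonal-form result \eqref{McCresult} in Theorem \ref{T:annulustestfunc}, and requires a structural classification of ${\mathbb C}^{N}$-valued reproducing kernel spaces carrying a contractive $H^{\infty}_{1}$-action. My plan is to take the Kolmogorov decomposition of $K$, realize $\cH(K)_{{\mathbb C}^{N}}$ as a functional Hilbert space on ${\mathbb D}$ on which each $R_{S_{\bt, \bw}}$ is a contraction, extract the one-parameter constraint at $z = 0$ dual to $f'(0) = 0$ (embodied in the unit vector $\sbm{\alpha \\ \beta}$), and absorb the remainder into the congruence factor $Y$. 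Modulo this structural step, the theorem follows by direct specialization of Theorem \ref{T:SchurAgler} and Remark \ref{R:interpolation}.
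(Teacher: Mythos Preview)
Your plan for (1) $\Leftrightarrow$ (2) matches the paper's: combine Theorem \ref{T:1matrixSchur} with Theorem \ref{T:SchurAgler}.

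For (1) $\Leftrightarrow$ (3), you have the priorities reversed relative to the paper. The paper's proof is exactly your ``alternative'': it invokes the commutant-lifting-based criterion from \cite{BBtH} (Theorem 1.3 there) and reformulates it via right multiplication on $\cH(K^{\alpha,\beta})_{{\mathbb C}^{N}}$ to obtain \eqref{1-Pick}. The paper explicitly \emph{declines} to verify that $(\Psi^{N}_{1})^{0}$ generates $\cK_{\Psi^{N}_{1}}({\mathbb C}^{N})$, and in Remark \ref{R:1reduction} singles out a direct proof of this generating-set property as an open question of independent interest. So your primary route is strictly more than the paper proves; the sketch you give for the hard half---that every admissible kernel is congruent to some $K^{\alpha,\beta}$, by ``extracting the one-parameter constraint'' and ``absorbing the remainder into $Y$''---is too schematic to count as a proof of what the authors themselves leave open. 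If you promote the \cite{BBtH} argument from fallback to main line, your proposal coincides with the paper's.
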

  
  \begin{proof}  The proof parallels that of Theorem 
      \ref{T:Rinterpolation}.  To verify the equivalence of condition 
      (2) with existence of a solution of the interpolation problem, 
      use Theorem \ref{T:1matrixSchur} in combination with Theorem 
      \ref{T:SchurAgler}.  By Remark \ref{R:interpolation}, the 
      validity of condition (3) follows if we can verify that the 
      collection $(\Psi_{1}^{N})^{0}$ given by \eqref{genkernels} is a generating set for the 
      collection of admissible kernels $\cK_{\Psi^{N}_{1}}({\mathbb 
      C}^{N})$.  However, rather than doing this we use Theorem 
      1.3 from \cite{BBtH}.  As was the case for the Schur class 
      over a domain $\cR$, the form presented there is somewhat 
      different from the form \eqref{1-Pick} as presented here.  
      However, one can follow the argument in \cite{BBtH} and work 
      with right multiplication operators on $\cH(K^{\alpha, 
      \beta})_{{\mathbb C}^{N}}$ rather than left multiplication 
      operators on a left-sided tensor of the coefficient space with a 
      reproducing kernel Hilbert space of row-vector functions to 
      arrive at the form \eqref{1-Pick} as the solution criterion.
  \end{proof}

  \begin{remark} \label{R:1reduction} {\em As was observed in 
      connection with Corollary \ref{C:annulus}, the Schur-Agler 
      class $\mathcal{SA}_{\Psi}$ associated with a collection of test 
      functions $\Psi$ depends on the functions $\psi \in \Psi$ only 
      through the kernels $I - \psi(z) \psi(w)^{*}$.  Hence, for 
      $S_{\bt, \bw} $ in the test-function class $\Psi^{N}_{1}$ we 
      may define an equivalence relation $S_{\bt, \bw} \sim S_{\bt',\bw'}$ 
      when there is a unitary constant matrix $U$ so that $S_{\bt', 
      \bw'}(z) = S_{\bt, \bw}(z) U$.  To choose one representative 
      out of each equivalence class, we may normalize $S \in 
      \Psi^{N}_{1}$ so that $S(1) = I_{N}$.  This has the effect of 
      restricting the parameter $(\bt, \bw)$ in $\widehat \Theta^{N}$ 
      to those such that $1$ is one of the points in the set of 
      points $\bt = (1, t_{2}, \dots, t_{n})$ with associated weight 
      $W_{1}$ invertible; in this way we get a new smaller parameter 
      space $\Theta^{N}$.  Then we have 
      $\overline{\cB}(H^{\infty}_{1})^{N \times N} = 
      \mathcal{SA}_{\widetilde \Psi^{N}_{1}}$ where $\widetilde 
      \Psi^{N}_{1} = \{ S_{\bt, \bw} \colon (\bt, \bw) \in  
      \Theta^{N}\}$ is this restricted class of test functions.
      
      For the case $N=1$ (the scalar case), Theorem 
      \ref{T:1matrixSchur} is due to Dritschel-Pickering \cite{DP}.  
      In this case the parameter space 
      $\widehat \Theta^{1} =: \widehat \Theta$ can be described in 
      geometric terms as consisting of (1) triples of points on the unit 
      circle such that 0 is in the interior of the associated 
      triangle, with the weights then being the barycentric 
      coordinates of 0 with respect to this triangle, or (2) a pair 
      of antipodal points on the unit circle with weights then 
      necessarily $(\frac{1}{2}, \frac{1}{2})$.  When the reduction 
      described in the previous paragraph is carried out, one 
      restricts to triples of points $\bt = (1, t_{2}, t_{3})$ which 
      include $1$ and there is only one antipodal pair of points $(1, -1)$.
      These authors also show that this space $\Theta$ with its 
      natural topology is homeomorphic to the unit sphere. They also 
      show  that the collection $\widetilde \Psi^{1}_{1}$ is a minimal 
      collection of test functions for 
      $\overline{\cB}H^{\infty}_{1}$.  Whether 
      $\widetilde \Psi_{1}^{N}$ is a minimal collection of test functions for 
      $\overline{\cB}(H^{\infty}_{1})^{N \times N}$ in general we 
      leave as an open question.
      
      As we have seen, there is a dual issue of finding minimal 
      generating sets for admissible collections of kernels 
      $\cK_{\Psi}({\mathbb C}^{N})$, as well as finding small 
      generating sets for such $\cK_{\Psi}({\mathbb C}^{N})$. 
      In particular, it would be 
      interesting to see a direct proof that $(\Psi^{N}_{\cR})^{0}$ 
      in \eqref{KU} generates 
      $\cK_{\Psi^{N}_{\cR}}$ and that the set $(\Psi_{1}^{N})^{0}$ in \eqref{genkernels}  
      generates $\cK_{\Theta^{N}}({\mathbb C}^{N})$. We note that the 
      proofs of the interpolation results from \cite{Abrahamse, 
      Ball79, DPRS, BBtH} use the dual factorization approach (see 
      \cite{DH} for a unified setting); an independent proof of the 
      generating property for $(\Psi^{N}_{\cR})^{0}$ and  
      $(\Psi^{N}_{1})^{0}$  
      would mean that Theorem \ref{T:SchurAgler} gives an independent 
      proof of these interpolation results.
       }\end{remark}
       
       \begin{remark} \label{R1:generalizations}  {\em 
	   An alternative description of $H^{\infty}_{1}$ is 
	   ${\mathbb C} + z^{2}H^{\infty}$.  Many of the results 
	   concerning the space $H^{\infty}_{1}$ have been 
	   generalized to  more general algebras of the form ${\mathbb C} 
	   + B H^{\infty}$ where $B$ is a Blaschke product (see 
	   e.g.~\cite{Rag08}).
	   We believe that the results from \cite{BG} are 
	   sufficiently flexible to lead to test-function 
	   Schur-Agler-class characterizations of matrix-valued 
	   versions of these more general algebras as well.
	   }\end{remark}


\begin{thebibliography}{99}
    
    \bibitem{Abrahamse}  M.B.~Abrahamse, {\em The Pick interpolation 
    theorem for finite connected domains}, Michigan Math.~J.~\textbf{26} (1979), 195--203.
    
    \bibitem{Agler-preprint} J.~Agler, {\em Interpolation}, 
    unpublished manuscript, circa 1988.
    
 \bibitem{Agler-Hellinger} J.~Agler, {\em On the representation of certain
      holomorphic functions defined on a polydisk}, in: Topics in Operator Theory: Ernst
      D.~Hellinger Memorial Volume (ed.~ L.~de Branges
     I.~Gohberg, J.~Rovnyak), \textbf{OT 48}, Birkh\"auser-Verlag, Basel, 1990.
      
 \bibitem{AHR}
J.~Agler, J.~Harland, B.J.~Raphael, {\em Classical Function Theory, 
Operator Dilation Theory, and
Machine Computation on
Multiply-Connected Domains}, Memoirs of the American Mathematical
Society, Number \textbf{92}, January 2008.
      
   \bibitem{AMcC99} J.~Agler and J.E.~McCarthy, {\em 
   Nevanlinna-Pick interpolation on the bidisk}, J.~Reine 
   Angew.~Math.~\textbf{506} (1999), 191--204.

      \bibitem{AMcC-book} J.~Agler and J.E.~McCarthy, {\em Pick
      Interpolation and Hilbert Function Spaces}, Graduate Studies in
      Mathematics Vol. 44, American Mathematical Society, Providence, 2002.
      
      \bibitem{AMcC05} J.~Agler and J.E.~McCarthy, {\em Distinguished varieties}, Acta 
      Math.~\textbf{194} (2005), 133--153.
      
  \bibitem{Ambrozie04}  C.-G.~Ambrozie, {\em Remarks on the 
  operator-valued interpolation for multivariable bounded analytic 
  functions}, Indiana University Math.~J.~\textbf{53} (2004), 
  1551--1576.
  
  \bibitem{AE} C.~Ambrozie and J.~Eschmeier, {\em A commutant lifting 
  theorem on analytic polyhedra}, in: Topological Algebras, Their 
  Applications, and Related Topics, Banach Center Publications 
  \textbf{67}, Institute of Mathematics, Polish Academy of Sciences, 
  Warsaw, 2005.
  
  \bibitem{AT} C.-G.~Ambrozie and D.~Timotin, {\em A von Neumann type 
  inequality for certain domains in ${\mathbb C}^{n}$}, 
  Proc.~Amer.~Math.~Soc.~\textbf{131} (2003), 859--869.  
 
  
  \bibitem{AP}  A.V.~Arkhangel'ski\u{\i} and V.V.~Fedorchuk, 
  {\em I.~The Basic Concepts and Constructions of General Topology}, in
  General Topology I (ed.~A.V.~Arkhangel'ski\u{\i} and 
  L.S.~Pontryagin),  pp.~1--90, 
  Encyclopaedia of Mathematical Sciences \textbf{17}, Springer, New 
  York, 1990.
      
  \bibitem{Aron}
      N.~Aronszajn, {\em Theory of reproducing kernels},
      Trans.~Amer.~Math.~Soc.~\textbf{68} (1950), 337--404.
      
  \bibitem{Arv}  W.~Arveson, {\em An Invitation to $C^{*}$-Algebras}, 
  Graduate Texts in Mathematics \textbf{39}, Springer-Verlag, New 
  York, 1976.
  

  
  \bibitem{Ball79}  J.A.~Ball, {\em A lifting theorem for operator 
  models of finite rank on multiply-connected domains}, J.~Operator 
  Theory \textbf{1} (1979), 3--25.

       \bibitem{BBFtH}  J.A.~Ball, A.~Biswas, Q.~Fang, and S.~ter Horst, 
   {\em Multivariable generalizations of the Schur class:  positive 
   kernel characterization and transfer function realization}, in: 
   Recent Advances in Operator Theory and Applications, pp. 17--79, 
   \textbf{OT 187} Birkh\"auser-Verlag, Basel, 2008.


      \bibitem{BBQ}  J.A.~Ball and V.~Bolotnikov, Realization and
      interpolation for Schur-Agler-class functions on domains with matrix
      polynomial defining function in ${\mathbb C}^{d}$, {\em J.~Functional
      Analysis} \textbf{213} (2004), 45-87.
      
   \bibitem{BBtH} J.A.~Ball, V.~Bolotnikov, and S.~ter Horst, {\em A 
   constrained Nevanlinna-Pick interpolation problem for matrix-valued 
   functions}, Indiana University Math.~J.~\textbf{59} (2010) No.~1, 
   15--51.
   
   \bibitem{BC}  J.A.~Ball and K.F.~Clancey, {\em Reproducing kernels 
   for Hardy spaces on multiply connected domains}, Integral 
   Equations and Operator Theory \textbf{25} (1996), 35--57.
   
   \bibitem{BGM}  J.A.~Ball, G.~Groenewald, and T.~Malakorn, {\em 
   Conservative structured noncommutative multidimensional linear 
   systems}, in: The State Space Method: Generalizations and 
   Applications (ed.~D.~Alpay and I.~Gohberg), pp. 179--223, 
  \textbf{OT 161} Birkh\"auser-Verlag, Basel, 2005.
   
   
   \bibitem{BG} J.A.~Ball and M.~Guerra Huaman, {\em Convexity 
   analysis and matrix-valued schur class over finitely connected 
   planar domains}, preprint, 2011. 
   
   \bibitem{BLTT} J.A.~Ball, W.S.~Li, D.~Timotin, and T.T.~Trent, 
   {\em A commutant lifting theorem on the polydisc}, Indiana 
   Univ.~Math.~J.~\textbf{48} (1999), 653--675.
   
   \bibitem{BT} J.A.~Ball and T.T.~Trent, {\em Unitary colligations, 
   reproducing kernel Hilbert spaces and Nevanlinna-Pick 
   interpolation in several variables}, J.~Funct.~Anal.\textbf{157} 
   (1998) no.~1, 1998.
   
   \bibitem{BTV} J.A.~Ball, T.T.~Trent, and V.~Vinnikov, {\em 
   Interpolation and commutant lifting for multipliers on reproducing 
   kernel Hilbert spaces}, in: The M.A.~Kaashoek Anniversary Volume 
   (Workshop in Amsterdam, November 1997) (ed. H.~Bart, I.~Gohberg, 
   A.C.M.~Ran), pp. 89--138, \textbf{OT 122} Birkh\"auser-Verlag, 
   Basel, 2001.
   
  
   
   \bibitem{BBLS}  S.D.~Barreto, B.V.R.~ Bhat, V.~Liebscher, and 
   M.~Skeide, {\em Type I product systems of Hilbert modules}, 
   J.~Func.~Anal.~\textbf{212} (2004), 121--181.
   
   \bibitem{DH} K.R.~Davidson and R.~Hamilton, {\em Nevanlinna-Pick 
   interpolation and factorization of linear functionals}, Integral 
   Equations and Operator Theory \textbf{70} (2011) no.~1, 125--149.
   
   
   \bibitem{DPRS} K.R.~Davidson, V.~Paulsen, M.~Raghupathi, and D.~Singh,
{\em A constrained Nevanlinna-Pick interpolation problem}, Indiana
University Math.~J. \textbf{58} (2009), 709--732.

   
      \bibitem{DMM07}
      M.A. Dritschel, S.A.M. Marcantognini and S. McCullough, {\em Interpolation
      in semigroupoid algebras},  J.~reine angew.~Math.~{\bf 606}
      (2007), 1--40.
      
       \bibitem{DM05} M.A.~Dritschel and S.~McCullough, {\em The failure of
      rational dilation on a triply connected domain}. 
      J.~Amer.~Math~Soc. \textbf{18} (2005), 873--918. 



      \bibitem{DM07}
      M.A. Dritschel and S. McCullough, {\em Test functions, kernels,
      realizations and interpolation}, in:
      Operator Theory, Structured Matrices, and Dilations. Tiberiu
      Constantinescu Memorial Volume (ed. M.~Bakonyi, A.~Gheondea,
      M.~Putinar and J.~Rovnyak), pp.~153--179,
      Theta Foundation, Bucharest, 2007.
      
 \bibitem{DP} M.A.~Dritschel and J.~Pickering,  {\em Test functions in
constrained interpolation}, Trans.~Amer.~Math.~Soc., to
appear.

      
      \bibitem{Dugundji}  J.~Dugundji, {\em Topology}, Allyn and 
      Bacon, Boston, 1966.
      
  \bibitem{EP}  J.~Eschmeier and M.~Putinar, {\em Spherical 
  contractions and interpolation problems on the unit ball}, J.~Reine 
  Angew.~Math.~\textbf{542} (2002), 219--236.
  
 \bibitem{Fisher} S.D.~Fisher, {\em Function Theory on Planar 
Domains: A Second Course in Complex Analysis}, Wiley \& Sons, New 
York, 1983; Second Edition:  Dover Publications, New York, 2007.
  
\bibitem{Schur} I.~Gohberg (ed.), {\em I.~Schur Methods in Operator 
Theory and Signal Processing}, OT \textbf{18}, Birkh\"auser, Basel, 
1986.
  
  
      \bibitem{GH}  M.D.~Guerra-Huaman, {\em Schur class of finitely 
      connected planar domains:  the test-function approach}, 
      Virginia Tech dissertation, 2011.
  
\bibitem{Jury} M.T.~Jury, {\em Universal commutative operator algebras and 
transfer function realizations of polynomials}, arXiv:1009.6219v1 
[math.FA], 30 Sep 2010.
      
      
      
\bibitem{JKM}
M.T.~Jury, G.~Knese and S.~McCullough, {\em Agler interpolation
families of kernels}, Oper.~Matrices \textbf{3}(2009)  no.~4, 
571--587.

\bibitem{McC95}  S.~McCullough, {\em Matrix functions of positive 
real part on an annulus}, Houston J.~Math.~\textbf{21} (1995) no.~3, 
489--506.


\bibitem{McC01}  S.~McCullough, {\em Isometric representations 
of some quotients of $H^{\infty}$ of an annulus}, Integral Equations 
and Operator Theory \textbf{39} (2001), 335--362.

\bibitem{McCP} S.~McCullough and V.~Paulsen, {\em $C^{*}$-envelopes and 
interpolation theory}, Indiana Univ.~Math.~J.~\textbf{51} (2002), 
479--505.


\bibitem{McCS} S.~McCullough and S.~Sultanic, {\em Ersatz commutant 
lifting with test functions}, Complex Analysis and Operator Theory 
\textbf{1} (2007), 581--620.



\bibitem{MP}  M.~Mittal and V.I.~Paulsen, {\em Operator algebras of 
functions}, J.~Funct.~Anal.~\textbf{258} (2010) no.~9, 3195--3225.


\bibitem{MS98}  P.S.~Muhly and B.~Solel, {\em Tensor algebras over 
$C^{*}$-correspondences:  representations, dilations, and 
$C^{*}$-envelopes}, J.~Funct.~Anal.~\textbf{158} (1998), 389--457.

\bibitem{MS08} P.S.~Muhly and B.~Solel, {\em Schur class operator 
functions and automorphisms of Hardy algebras}, Documenta Mathematica 
\textbf{13} (2008), 365--411.


\bibitem{PaulsenIEOT}  V.I.~Paulsen, {\em Matrix-valued interpolation 
and hyperconvex sets}, INtegral Equations and Operator Theory 
\textbf{41} (2001), 38--62.

\bibitem{PaulsenJFA} V.I.~Paulsen, {\em Operator algebras of 
idempotents}, J.~Func.~Anal.~\textbf{181} (2001), 209--226.

\bibitem{PS} V.I.~Paulsen and J.P.~Solazzo, {\em Interpolation and 
balls in ${\mathbb C}^{2}$}, J.~Operator Theory \textbf{60} (2008) 
no.~2, 379--398.

\bibitem{Paulsen} V.~Paulsen, {\em Completely Bounded Maps and Operator 
Algebras}, Cambridge Studies in Advanced Mathematics \textbf{78}, 
Cambridge University Press, Cambridge, 2002.

\bibitem{RW} I.~Raeburn and D.P.~Williams, {\em Morita Equivalence 
and Continuous-Trace $C^{*}$-Algebras}, Mathematical Surveys and 
Monographs \textbf{60}, American Mathematical Society, Providence, 
1998.

\bibitem{Rag08}  M.~Raghupathi, {\em Nevanlinna-Pick interpolation 
for ${\mathbb C} + B H^{\infty}$}, Integral Equations and Operator 
Theory \textbf{63} (2009), 103--125.

\bibitem{Rudin} W.~Rudin, {\em Functional Analysis (Second Edition)}, 
McGraw-Hill, 1991.
      
\bibitem{Takesaki} M.~Takesaki, {\em Theory of Operator Algebras I}, 
Encyclopaedia of Mathematical Sciences \textbf{124}, Springer, 
New York, 1979.

\bibitem{Tomerlin}  A.T.~Tomerlin, {\em Products of Nevanlinna-Pick 
kernel and operator colligations}, Integral Equations and Operator 
Theory \textbf{38} (2000), 350--356.

\bibitem{VF} V.~Vinnikov and S.I.~Fedorov, {\em The Nevanlinna-Pick 
interpolation problem in multiply connected domains}, 
J.~Math.~Sciences \textbf{105} (2001) no.~4, 2109--2126. 
      



\end{thebibliography}
\end{document}